\documentclass[10pt,reqno]{amsart} 

\usepackage{amssymb,latexsym,  
amscd, 
amsthm, 
graphicx, epsfig
}
\usepackage{cite} 

\usepackage[height=190mm,width=130mm]{geometry} 


\theoremstyle{plain}

\newtheorem{lemma}{Lemma}
\newtheorem{corollary}{Corollary}
\newtheorem{proposition}{Proposition}

\theoremstyle{definition}
\newtheorem{definition}{Definition}

\theoremstyle{remark}
\newtheorem{remark}{Remark}

\newcommand{\Z}{\mathbb{Z}}
\newcommand{\W}{\mathcal{W}} 
\newcommand{\U}{\mathcal{U}}

\newcommand{\Oo}{\mathcal{O}}
\newcommand{\C}{\mathbb{C}}



\numberwithin{equation}{section} 




\newcommand{\nn}{\nonumber \\}

 \newcommand{\res}{\mbox{\rm Res}}

\renewcommand{\hom}{\mbox{\rm Hom}}

\newcommand{\wt}{\mbox{\rm wt}\ }

\newcommand{\N}{\mathbb{N}}

\newcommand{\F}{\mathcal{F}}

\newcommand{\V}{\mathcal{V}}
\newcommand{\one}{\mathbf{1}}


\begin{document}

\title[Characterization of codimension one foliations  
 on complex curves]   
{Characterization of codimension one foliations on complex curves by connections} 
\author{A. Zuevsky}
\address{Institute of Mathematics \\ Czech Academy of Sciences\\ Praha}

\email{zuevsky@yahoo.com}






\begin{abstract}
A way to characterize the space of leaves 
 of a foliation in terms of connections is proposed.  
A particular example of vertex algebra cohomology of 
codimension one foliations on complex curves is considered. 
Mupltiple applications in Mathematical physics are revealed. 
\end{abstract}

\keywords{Riemann surfaces; vertex operator algebras; character functions; foliations; cohomology}
\vskip12pt  

\maketitle
\section{Introduction}
\label{introduction}
The theory of foliations involves a bunch of approaches \cite{Bott, BS, BH, CM, 
 Fuks1, Fuks2, GF1, GF2, GF3, Losik} and many others. 
In certain cases it is useful to express cohomology in terms of connections and 
use the language of connections in order to study leave spaces of foliations.   
Connections appear in conformal field theory \cite{FMS, BZF} in definitions of many notions and formulas. 
Vertex algebras, generalizations of ordinary Lie algebras, are essential in conformal field theory. 
The theory of vertex algebra characters is a rapidly developing 
field of studies. Algebraic nature of methods applied in this field helps to understand and compute  
the structure of vertex algebra characters. 
On the other hand, the geometric side of vertex algebra characters is in associating their formal parameters with 
local coordinates on a complex variety.   
Depending on geometry, one can obtain various consequences for a vertex algebra and its space 
of characters, and vice-versa, one can study geometrical property of a manifold by using 
algebraic nature of a vertex algebra attached. 
In this paper we use the vertex algebra cohomology theory \cite{Huang} for characterization   
codimension one foliations on smooth complex curves \cite{Gu1, Gu2}.
The arbitrary condimension case will be considered in elsewhere.  

The general theory of characterizations of leaves of codimension one foliations  
 on complex curves has multiples applications in Mathematical Physiscs.  
In particular, since the notion connections used to define cohomology stems from 
reduction formulas \cite{Zhu} for correlation functions for vertex algebras with formal parameters 
identified with local parameters on Riemann surfaces, the codimension one foliation formulas 
are useful in studies of spaces of correlation functions. 
The double complexes used in this paper to define a cohomology originate from 
consideration of cohomology of vertex algebras in \cite{Huang, Hu3}.
Recall that vertex algebras play central role in many aspects of Mathematical Physics. 
The cohomology of vertex algebras is an important questions not only in the theory of vertex 
algebras but also in applications to Conformal Field Theory. 
 
The plan of the paper is the following. 
In Section \ref{connections} we describe the approach to cohomology in terms of connections. 
In Section \ref{spaces} we define the spaces for double complex associated to a quasi-conformal
 grading-restricted vertex algebra. Non-vanishing and canonicity of the construction is proved. 
In Section \ref{coboundary} coboundary operators are defined. It is shown that combining with the double complex 
spaces they determine chain-cochain double complex. 
In Section \ref{firstcohomology} we determine the first 
vertex algebra cohomologies of a codimension one foliation. 
Corresponding cohomological classes are considered in Section \ref{cohomological}. 
In Section \ref{applications} we consider multiple applications in Mathematical Physics. 
In Appendixes we provide the material needed for construction of the vertex algebra cohomology 
of foliations.
In Appendix \ref{grading} we recall the notion of a quasi-conformal grading-restricted vertex algebra 
and its modules.
 In Section \ref{valued} the space of $\W$-valued rational sections of a vertex algebra bundle is described.  
In Section \ref{properties} properties of matrix elements for elements of the space $\W$ are listed. 
In Section \ref{composable} maps composable with a number of vertex operators are defined. 
Appendix \ref{proof} contains proofs of Lemmas \ref{empty}, \ref{nezu}, \ref{subset}, and Proposition \ref{nezc}. 
\section{Cohomology in terms of connections}
\label{connections}
In various situations it is sometimes effective to use an interpretation of cohomology in terms of 
connections. 
In particular in our supporting example of vertex algebra cohomology of codimension one foliations. 
It is convenient to introduce multi-point connections over a graded space  
and to express coboundary operators and cohomology in terms of connections: 
\[
\delta^n \phi  \in G^{n+1}(\phi),  
\]
\[
 \delta^n \phi= G(\phi). 
\]
Then the cohomology is defined as the factor space 
\[
H^n= {\mathcal Con}^n_{cl; \; }/G^{n-1},   
\]
of closed multi-point connections with respect to the space of connection forms defined below. 
\subsection{Multi-point holomorphic connections} 
We start this section with definitions of holomorphic multi-point connections on a smooth complex variety.  
Let $\mathcal X$ be a smooth complex variety and $\V \to \mathcal X$ a holomorphic vector bundle over $\mathcal X$. 
Let $E$ be the sheaf of holomorphic sections of $\V$.
 Denote by $\Omega$
 the sheaf of differentials on $\mathcal X$. 
A holomorphic connection $\nabla$ on $E$ is a $\C$-linear map 
\[
\nabla: E \to E \otimes \Omega, 
\]
 satisfying the Leibniz formula
\[
\nabla(f\phi)= \nabla f \phi + \phi \otimes dz, 
\]
for any holomorphic function $f$.  
Motivated by the definition of the holomorphic connection $\nabla$ 
 defined for a vertex algebra bundle (cf. Section 6, \cite{BZF}) over  
a smooth complex variety $\mathcal X$, we introduce the definition of 
the multiple point holomorphic connection over $\mathcal X$. 
\begin{definition}
\label{mpconnection}
Let $\V$ be a holomorphic vector bundle over $\mathcal X$, and $\mathcal X_0$ its subvariety.  
A holomorphic multi-point 
connection $\mathcal G$ on $\V$    
is a $\C$-multi-linear map 
\[
\mathcal G: E  \to  E \otimes \Omega,
\]  
such that for any holomorphic function $f$, and two sections $\phi(p)$ and $\psi(p')$ at points  
$p$ and $p'$ on $\mathcal X_0$ correspondingly, we have 
\begin{equation}
\label{locus}
\sum\limits_{q, q' \mathcal X_0 \subset \mathcal X} \mathcal G\left( f(\psi(q)).\phi(q') \right) = f(\psi(p')) \; 
\mathcal G \left(\phi(p) \right) + f(\phi(p)) \; \mathcal G\left(\psi(p') \right),    
\end{equation}
where the summation on left hand side is performed over a locus of points $q$, $q'$ on $\mathcal X_0$. 
We denote by ${\mathcal Con}_{\mathcal X_0}(\mathcal S)$ the space of such connections defined over a
smooth complex variety $\mathcal X$. 
We will call $\mathcal G$ satisfying \eqref{locus}, a closed connection, and denote the space of such connections 
by ${\mathcal Con}^n_{\mathcal X_0; cl}$.  
\end{definition}
Geometrically, for a vector bundle $\V$ defined over a complex variety $\mathcal X$,
a multi-point holomorphic connection \eqref{locus} relates two sections $\phi$ and $\psi$ of $E$ at points $p$ and $p'$
with a number of sections at a subvariety $\mathcal X_0$ of $\mathcal X$.  
\begin{definition}
We call
\begin{equation}
\label{gform}
G(\phi, \psi) = f(\phi(p)) \; \mathcal G\left(\psi(p') \right)  + f(\psi(p')) \; \mathcal G \left(\phi(p) \right)   
- \sum\limits_{q, q' \mathcal X_0 \subset \mathcal X} \mathcal G\left( f(\psi(q')).\phi(q) \right), 
\end{equation}
the form of a holomorphic connection $\mathcal G$. 
The space of form for $n$-point holomorphic connection forms will be denoted by $G^n(p, p', q, q')$.  
\end{definition}
\begin{definition}
\label{fpconnection}
A fixed point holomorphic connection on $E$ is defined by the condition 
\begin{equation}
\label{locusfixed}
\sum\limits_{p_0; \; q, q' \in \mathcal X_0 \subset \mathcal X } \mathcal G\left( f(\psi(q')).\phi(q) \right) = f(\psi 
(p'_0)) \; 
\mathcal G \left(\phi(p) \right) + f(\phi(p)) \; \mathcal G \left(\psi(p_0) \right),    
\end{equation}
where a point $p_0 $ is fixed on $\mathcal X_0$.  
\end{definition}
\begin{definition}
\label{onepconnection}
A holomorphic connection defined for a vector bundle $\V$ over a smooth complex variety $\mathcal X$ 
(the 
two point case of the multi-point holomorphic connection \eqref{locus}) is called a two 
 point connection
 when for any holomorphic function $f$, 
\begin{equation} 
\label{onepointlocus}
\mathcal G\left( f(\psi(p') \right). \phi(p)) = f(\psi(p'))\; \mathcal G(\phi(p)) + f(\phi(p))\; \mathcal G(\psi(p')),  
\end{equation}
for two sections $\psi(p')$ and $\phi(p)$ of $E$.   
We denote the space of such connections as ${\mathcal Con}^2_{p, p_0; \; \mathcal X_0}$. 
\end{definition}
Let us formulate another definition which we use in the next section: 
\begin{definition}
\label{transcon}
We call a multi-point holomorphic connection $\mathcal G$ 
 the transversal connection, i.e., when it satisfies 
\begin{equation}
\label{transa0}
f(\psi(p'))\; \mathcal G(\phi(p)) + f(\phi(p)) \; \mathcal G(\psi(p'))=0. 
\end{equation}
We call 
\begin{equation}
\label{transa}
G_{tr}(p, p')=(\psi(p'))\; \mathcal G(\phi(p)) + f(\phi(p)) \; \mathcal G(\psi(p')), 
\end{equation}
the form of a transversal connection. The space of such connections is denoted by $G^2_{tr}$.  
\end{definition}
\subsection{Basis of transversal sections for a foliation} 
\label{basis}
In this subsection we recall \cite{CM} the notion of basis of transversal sections for a foliation. 
 Let $\mathcal M$ be a manifold of dimension $n$, equipped with a foliation $\F$ of codimension $q$.
\begin{definition}
A transversal section of $\F$ is an embedded $q$-dimensional submanifold $U\subset M$ which 
is everywhere transverse to the leaves. 
\end{definition}
\begin{definition}
If $\alpha$ is a path between two points $p_1$ and $p_2$ on the same leaf, 
and if $U_1$ and $U_2$ are transversal sections through $p_1$ and $p_2$, 
 then $\alpha$ defines a transport
along the leaves from a neighborhood of $p_1$ in $U_1$ to a neighborhood of $p_2$ in $U_2$, 
hence a germ of a diffeomorphism 
\[
hol(\alpha): (U_1, p_1)\hookrightarrow  (U_2, p_2),
\] 
called the holonomy of the path $\alpha$. 
\end{definition}
\begin{definition}
Two homotopic paths always define the same holonomy. 
If the above transport along $\alpha$ is defined in all of $U_1$ and embeds $U_1$ into $U_2$, this embedding
$h: U_1\hookrightarrow U_2$ will be denoted by 
$hol(\alpha): U_1\hookrightarrow U_2$. 
Embeddings of this form we called {holonomy embeddings}.  
\end{definition}
Note that composition of paths also induces an 
operation of composition on those holonomy embeddings. 
Transversal sections $U$ through $p$ as above should be thought of 
as neighborhoods of the leaf through $p$ in
the leaf space.
Then we have 
\begin{definition}
A {transversal basis} for $\mathcal M/\F$ as a 
family $\U$ of transversal sections $U\subset M$ 
with the property that, if $U_p$ is any transversal section through a given point $p\in M$, there exists a 
holonomy embedding $h: U\hookrightarrow V$ with $U\in \U$ and $p\in h(U)$.
\end{definition}
Typically, a transversal section is a $q$-disk given by a chart for the foliation. 
Accordingly, we can construct 
a transversal basis $\U$ out of a basis $\widetilde{\U}$ of $\mathcal M$ by domains of foliation charts
\[
 \phi_{U}: \widetilde{U}\tilde{\hookrightarrow } \mathbb{R}^{n-q}\times U,
\] 
$\widetilde{U}\in \widetilde{\U}$,
 with $U=\mathbb{R}^q$.
 Note that each inclusion $\widetilde{U}\hookrightarrow \widetilde{V}$
between opens of $\widetilde{\U}$ induces a holonomy embedding 
\[
h_{U, U_0}: U\hookrightarrow  U_0, 
\]
defined 
by the condition that the plaque in $\widetilde{U}$ through 
$p$ is contained in the plaque in $\widetilde{U_0}$ through $h_{U, U_0}(x)$. 

\subsection{${\bf \check C}$ech-de Rham cohomology in Crainic and Moerdijk construction}
Let us start with the first supporting example \cite{CM}. 
Recall the construction of the ${\rm \check C}$ech-de~Rham cohomology of a foliation.  
cohomology by Crainic and Moerdijk~\cite{CM}.
 Consider a foliation $\F$ of co-dimension $n$ on a smooth manifold $\mathcal M$.
Consider the double
complex 
\begin{equation}
\label{Cpq}
C^{k,l}=\prod_{U_0\stackrel{h_1}{\hookrightarrow }\cdots
\stackrel{h_k}{\hookrightarrow } U_k}\Omega^l(U_0),
\end{equation}
 where $\Omega^l(U_0)$ is the space of differential $l$-forms on $U_0$, 
and the 
product ranges over all $k$-tuples of holonomy embeddings between
transversal sections from a fixed transversal basis $\U$.
Component of $\varpi\in C^{k, l}$ are denoted by
$\varpi(h_1, \ldots, h_l)\in \Omega^l(\U_0)$.
 The
vertical differential is defined as 
\[
(-1)^k d:C^{k,l}\to
C^{k,l+1},
\]
 where $d$ is the usual de~Rham differential. 
The horizontal differential 
\[
\delta:C^{k,l}\to C^{k+1,l}, 
\]
 is given by
\[
\delta= \sum\limits_{i=1}^k (-1)^{i} \delta_{i},   
\]
\begin{equation}
\label{deltacpq}
\delta _{i} \varpi(h_1, \ldots , h_{k+1}) = G(h_1, \ldots , h_{k+1}),  
\end{equation}
where $G(h_1, \ldots , h_{k+1})$ is the multi-point connection of the form \eqref{locus}, i.e., 
\begin{equation}
\label{delta} 
\delta_{i} \varpi(h_1, \ldots , h_{p+1})= \left\{ \begin{array}{lll}
                                      h_{1}^{*}\varpi(h_2, \ldots , h_{p+1}),  \ \ \mbox{if $i=0,$}\\ 
                                      \varpi(h_1, \ldots, h_{i+1}h_{i}, \ldots, h_{p+1}),  \ \ \mbox{if $0<i< p+1,$}\\
                                      \varpi(h_1, \ldots, h_p), \ \ \mbox{if $i= p+1.$}
                        \end{array}
                \right.
\end{equation}
This double complex is actually a bigraded differential algebra, with the usual product 
\begin{equation}
\label{bigradif}
(\varpi \cdot  \eta)(h_1, \ldots , h_{k+k\,'})= (-1)^{kk\,'} \varpi(h_1, \ldots , 
h_{k}) \; h_{1}^{*} \ldots h_{k}^{*}\;.\eta(h_{k+1}, \ldots h_{k+k\,'}), 
\end{equation}
for $\varpi\in C^{k, l}$ and $\eta\in C^{k',l'}$, 
thus $(\varpi\cdot\eta)(h_1, \ldots , h_{k+k\,'}) \in C^{k+ k',l+ l'}$. 
The cohomology $\check{H}^*_\U(M/\F)$ of this complex is called the ${\rm \check C}$ech-de~Rham
cohomology of the leaf space $\mathcal M/\F$ with respect to the transversal basis $\U$. 
It is defined by 
\[
\check{H}^*_\U(M/\F)= {\mathcal Con}^{k+1}_{cl}(h_1, \ldots , h_{k+1})/ G^k(h_1, \ldots , h_{k}),  
\]
where ${\mathcal Con}^{k+1}_{cl}(h_1, \ldots , h_{k+1})$ is the space of closed multi-point connections, 
and $G^k$ $(h_1$, $\ldots$, $h_{k})$ is the space of $k$-point connection forms. 
In what follows we describe another supporting example of vertex algebra cohomology of codimension one foliations 
interpreted in terms of connections. 
\section{Spaces for double complexes}  
\label{spaces}
In this section we introduce the definition of spaces for a double complex suitable for the construction 
a grading-restricted vertex algebra cohomology for codimension one 
foliations on complex curves.  
A consideration of foliations of smooth manifold of arbitrary dimension will be given in elsewhere.   
Let $\U$ be a family of transversal sections of $\F$, (cf. \cite{CM} and Subsection \ref{basis}).  
We consider $(n,k)$-set of points, $n \ge 1$, $k \ge 1$,    
\begin{equation}
\label{points}
\left(p_1, \ldots, p_n; p'_1, \ldots, p'_k \right), 
\end{equation}
on a smooth manifold $\mathcal M$. 
Let us denote the set of corresponding local coordinates for $n+k$ points on $\mathcal M$ as $c_i(p_i)$, $1 \le i \le n+k 
$.    
In what follows we consider points \eqref{points} as points on either the leaf space $\mathcal M/\F$ of $\F$,
 or on transversal sections $U_j$ of the transversal basis $\U$. 
Since $\mathcal M/\F$ is not in general a manifold, one has to be careful in considerations of chains of local coordinates 
along its leaves \cite{IZ, Losik}.
 For association of formal parameters of mappings and vertex operators 
with points of $\mathcal M/\F$ we will use in what follows either their local coordinates on $\mathcal M$ or local  
coordinates on 
sections $U$ of a transversal basis $\U$ which are submanifolds of $\mathcal M$ of dimension equal to codimenion of a  
foliation $\F$.  
We will denote such local coordinates 
as $\{ l_i(p_i)\}$ (on $\mathcal M$), and $\{ t_i(p_i)\}$ (on $U$) correspondingly.   
In case of extremely singular foliations when it is not possible to use local coordinates of $\mathcal M$ in order to  
parameterize 
a point on $\mathcal M/\F$ we still able to use a local coordinate on a transversal section passing through this point on  
$\mathcal M/\F$.
In addition to that, note that the complexes considered below are constructed in such a way that one can always 
use coordinates on transversal sections only, avoiding any possible problems with localization of coordinates on leaves of $\mathcal M/\F$. 

For a $(n, k)$-set of a grading-restricted vertex algebra $V$ elements  
\begin{equation}
\label{vectors}
\left(v_1, \ldots, v_n; v'_1, \ldots, v'_k\right),   
\end{equation}
we consider linear maps 
\begin{equation} 
\label{maps}
\Phi: V^{\otimes n} \rightarrow \W_{z_1, \ldots, z_n}  
\end{equation}
 (see Appendix \ref{valued} for the definition of a 
$\W_{z_{1}, \dots, z_{n}}$ space), 
\begin{equation}
\label{elementw}
\Phi \left(dz_1^{\wt v_1} \otimes v_1, c_1(p_1); \ldots; dz_n^{\wt v_n} \otimes v_n,  c_n(p_1) \right),    
\end{equation}
where we identify, as it is usual in the theory of characters for vertex operator algebras on curves 
\cite{Y, Zhu, TUY, H2},   
 $n$ formal parameters   
$z_{1}, \ldots , z_{n}$ of $\W_{z_{1}, \ldots , z_{n}}$, 
with local coordinates $c_i(p_i)$ in vicinities of points $p_i$, $0 \le i \le n$,  
 on $\mathcal M$. 
 Elements $\Phi\in \W_{c_1(p_1), \ldots, c_n(p_n)}$ can be seen as 
  coordinate-independent $\overline{W}$-valued rational sections 
 of a vertex algebra bundle \cite{BZF} generalization.
The construction of vertex algebra cohomology of a foliation in terms of connections is parallel to ideas of \cite{BS}. 
 Such a construction will be explained elsewhere.  
Note that according to \cite{BZF} 
it can be treated as 
$\left({\rm Aut}\; \Oo^{(1)}\right)^{\times n}={\rm Aut}\; \Oo^{(1)} \times \ldots \times {\rm Aut}\; \Oo^{(1)}$-torsor 
of the product of groups of a coordinate transformation.   
In what follows, according to definitions of Appendix \ref{valued}, 
when we write an element $\Phi$ of the space $\W_{z_1, \ldots, z_n}$, we actually have in mind 
  corresponding matrix element $\langle w', \Phi\rangle$ that 
  absolutely converges (on certain domain) to 
a rational form-valued function $R(\langle w', \Phi\rangle)$. Quite frequently we will write 
$\langle w', \Phi\rangle$ which would denote a rational $\W$-valued form.   
In notations, we would keep tensor products of vertex algebra elements with $\wt$-powers of $z$-differentials 
when it is inevitable only. 

 Later in this section we prove, that
 for arbitrary $v_i$, $v'_j \in V$, $1 \le i \le n$, $1 \le j \le k$, 
 points $p'_i$ with local coordinates $c_i(p'_i)$ on transversal sections $U_i\in \U$ of $\F$, 
an element \eqref{elementw} 
 as well as the vertex operator 
 \begin{equation}
\label{poper}
\omega_W\left(dc_1({p'_i})^{\wt(v'_i)} \otimes v'_{i},  c_1({p'_1})\right)
= Y\left(  d(c_1(p'_i))^{\wt(v'_i)} \otimes v'_{i},  c_1({p'_i})\right), 
\end{equation}
 are invariant with respect to the action of 
$\left({\rm Aut}\; \Oo^{(1)}\right)^{\times n}$. 
In \eqref{poper} we mean usual vertex operator (as defined in Appendix \ref{grading}) not affecting the tensor product 
with corresponding differential. 
We assume that the maps \eqref{maps} are composable 
(according to Definition \eqref{composabilitydef} of Appendix \ref{composable}), 
 with $k$   
vertex operators $\omega_W(v'_i, c_i(p'_i))$, $1 \le i \le k$ 
with $k$ vertex algebra elements from \eqref{vectors}, and 
formal parameters associated with local coordinates on  
$k$ transversal sections of $\F$, of $k$ points from the set \eqref{points}. 

The composability of a map $\Phi$ with a number of vertex operators
consists of two conditions on $\Phi$.
 The first requires the existence of positive 
integers $N^n_m(v_i, v_j)$ depending just on $v_i$, $v_j$, and the second 
restricts orders of poles of corresponding sums \eqref{Inm}  
 and \eqref{Jnm}. 
Taking into account these conditions, we will see that 
the construction of the space \eqref{ourbicomplex} does depend on the choice 
of vertex algebra elements \eqref{vectors}. 

In this subsection we construct the spaces for the double complex defined for
codimension one foliations and associated to a grading-restricted vertex algebra. 
In order to keep control on the construction, we consider a section $U_j$ of a transversal basis $\U$,
and mappings $\Phi$ that belong to the space $\W_{c(p_1), \ldots, c(p_n)}$, 
depending on points $p_1, \ldots, p_n$ of intersection of $U_j$ with leaves of $\mathcal M/\F$ of $\F$.
It is assumed that local coordinates $c(p_1), \ldots, c(p_n)$ of points $p_i$, $1 \le i \le n$, 
are taken on $\mathcal M$ along these leaves of $\mathcal M/\F$. 
We then consider all together locally a collection of $k$ sections of $\U$.  
In order to define vertex algebra cohomology of $\mathcal M/\F$, 
mappings $\Phi$ are supposed to be composable with a number of vertex operators 
 with a number of vertex algebra elements, and 
formal parameteres identified with local coordinates of points $p'_1, \ldots, p'_k$ on each of 
$k$ transversal sections $U_j$, $1 \le j \le k$.    
The above setup is considered for a set of vertex algebra elements, which could be varied accordingly. 
We first introduce 
\begin{definition}
\label{initialspace}
 Let $p_1, \dots, p_n$ be points taken on the same transversal section $U_j \in \U$, $j\ge 1$. 
Assuming $k \ge 1$, $n \ge 0$,  
we denote by $C^n(V, \W, \F)(U_j)$, $0 \le j \le k$,       
the space of all linear maps \eqref{maps}
\begin{equation}
\label{mapy}
 \Phi: V^{\otimes n } \rightarrow \W_{c_1(p_1), \dots, c_{n}(p_n)},  
\end{equation}
composable with a $k$ of vertex operators \eqref{poper} with formal parameters identified 
with local coordinates $c_j(p'_j)$ functions around points $p'_j$ 
on each of transversal sections $U_j$, $1 \le j \le k$.  
\end{definition}
The set of vertex algebra elements  \eqref{vectors} 
plays the role of parameters in our further construction of 
the vertex algebra cohomology associated with a foliation $\F$. 
According to considerations of Subsection \ref{basis}, we assume that each transversal section of a transversal basis 
$\U$ possess a coordinate chart which is induced by a coordinate chart of $\mathcal M$ (cf. \cite{CM}).  

Recall the notion of a holonomy embedding (cf. Subsection \ref{basis}, cf. \cite{CM}) which maps 
 a section into another section of a transversal basis, 
 and a coordinate chart on the first section 
into a coordinate chart on the second transversal section.  
Motivated by  
the definition of the spaces for the ${\rm \check C}$ech-de Rham complex in \cite{CM} (see Subsection \ref{basis}),
 let us now introduce the following spaces: 
\begin{definition}
\label{defspace}
For $n\ge 0$, and $m\ge 1$, we define the space   
\begin{equation}
\label{ourbicomplex}
 {C}^{n}_{m}(V, \W, \U, \F) =  \bigcap_{ 
U_{1} \stackrel{h_1} {\hookrightarrow }  \ldots \stackrel {h_{m-1}}{\hookrightarrow } U_k  
\atop 1 \le j \le m }  
 C^{n}(V, \W, \F) (U_j),    
\end{equation}
where the intersection ranges over all possible $m$-tuples of holonomy embeddings $h_i$, $i\in \{1, \ldots, m-1\}$, 
between transversal sections of the baisis $\U$  for $\F$. 
\end{definition}
%
First, we have the following
\begin{lemma}
\label{empty}
\eqref{ourbicomplex} is non-zero.  
\end{lemma}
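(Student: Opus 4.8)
The plan is to show that the intersection in \eqref{ourbicomplex} is non-empty by exhibiting at least one nonzero element that survives restriction to every transversal section $U_j$ appearing in every $m$-tuple of holonomy embeddings. The natural candidate is built from the vacuum-like data of the vertex algebra together with the canonical $\W$-valued sections that come from iterated vertex operators; the point is that such maps are manifestly coordinate-independent (being defined via the intrinsic vertex operator $Y$ and the $\Aut\,\Oo^{(1)}$-equivariance recalled in Section \ref{spaces}), so their definition does not care which section $U_j$ we are on, and restriction along a holonomy embedding $h_i$ pulls one such section back to another of the same form.

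First I would treat the base case $n=0$: here $C^0(V,\W,\F)(U_j)$ consists of maps $\Phi\colon \C \to \W_{\emptyset}$, i.e.\ of $\W$-valued ``constants'', and the element corresponding to the vacuum $\vac$ (equivalently, the constant section $1$) lies in every $C^0(V,\W,\F)(U_j)$ trivially, is composable with any number of vertex operators (composability with the vacuum being automatic from the grading-restriction and the defining properties in Appendix \ref{grading}), and is nonzero. Hence the intersection is nonzero for $n=0$. For $n\ge 1$ I would construct, for fixed vertex algebra elements \eqref{vectors}, the map
\[
\Phi\bigl(dz_1^{\wt v_1}\otimes v_1, c_1(p_1); \ldots; dz_n^{\wt v_n}\otimes v_n, c_n(p_n)\bigr)
= \langle w', \; \omega_W(v_1,c_1(p_1))\cdots \omega_W(v_n,c_n(p_n))\,\vac\rangle,
\]
interpreted as a rational $\W$-valued form in the sense fixed in Section \ref{spaces}. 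By the convergence and rationality properties listed in Appendix \ref{properties} and the composability criteria of Appendix \ref{composable} (existence of the integers $N^n_m(v_i,v_j)$ and the pole-order bounds on the sums \eqref{Inm}, \eqref{Jnm}), this $\Phi$ is a well-defined element of $C^n(V,\W,\F)(U_j)$ for each $j$.

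Next I would check that this single $\Phi$ lies in the intersection over all $U_j$ in all $m$-tuples. This is where the key structural fact enters: because $\Phi$ is expressed purely through $Y$ and the $\bigl(\Aut\,\Oo^{(1)}\bigr)^{\times n}$-invariance established just before Definition \ref{initialspace}, for any holonomy embedding $h_i\colon U\hookrightarrow U_0$ the pulled-back map $h_i^*\Phi$ agrees with the map of the same shape written in the coordinate chart on $U_0$; thus membership in $C^n(V,\W,\F)(U)$ is preserved under the transition maps defining the intersection. Since the chosen $\Phi$ is nonzero --- for suitable $w'$ and generic insertion points the matrix element is a nonzero rational form, which follows from the nondegeneracy of the pairing in Appendix \ref{valued} and the fact that $Y(v,z)\vac = v + O(z)$ --- we conclude $C^n_m(V,\W,\U,\F)\neq 0$.

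The main obstacle I anticipate is verifying that the candidate $\Phi$ genuinely satisfies the \emph{composability} conditions uniformly in $j$, i.e.\ that the integers $N^n_m(v_i,v_j)$ and the pole-order bounds can be chosen independently of the transversal section and survive the holonomy pullbacks; this is the content one must extract from Appendices \ref{properties} and \ref{composable}, and it is precisely the place where the quasi-conformal grading-restricted hypothesis on $V$ is used. The de Rham/coordinate-change bookkeeping (powers of $dz^{\wt v_i}$ transforming correctly under $\Aut\,\Oo^{(1)}$) is routine given the setup of Section \ref{spaces}, so I would state it briefly and refer to \cite{BZF}, and spend the detailed estimates only on the uniform composability bound.
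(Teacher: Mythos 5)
Your proposal is a legitimate strategy but it takes a genuinely different route from the paper, and the route leaves the decisive step unperformed. The paper does not exhibit a witness element at all: it simply asserts that each individual space $C^{n}(V,\W,\F)(U_j)$ is non-empty, observes that the only data that can vary from one transversal section to another are the two composability conditions \eqref{Inm} and \eqref{Jnm}, and then argues directly that the relevant convergence domains --- the regions \eqref{popas}, bounded above by $|\zeta_i-\zeta_j|$, for the first condition, and the regions $|c_i(p_i)|>|c_k(p_k)|>0$, bounded below, for the second --- together with the pole-order bounds $N^n_m(v_i,v_j)$ (which depend only on the vertex algebra elements, not on the section) remain consistent when one intersects over all $m$-tuples of holonomy embeddings. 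That convergence-domain analysis \emph{is} the proof. You instead propose a concrete candidate built from iterated vertex operators and try to carry it through every section by $\Aut\,\Oo^{(1)}$-equivariance, explicitly deferring the uniform-in-$j$ composability check to the appendices. Two problems follow. First, the equivariance you invoke is really the content of Proposition \ref{nezc} (canonicity of the construction under coordinate changes); it does not address what the intersection in \eqref{ourbicomplex} actually demands, namely that one and the same map be composable with vertex operators whose formal parameters are identified with coordinates on \emph{all} the different sections simultaneously, so it cannot substitute for the convergence estimate. Second, the step you postpone is not a routine bookkeeping item extractable from Appendix \ref{composable}; it is the entire mathematical content of the lemma, so as written your argument has a genuine gap precisely where the paper's argument lives.

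Two smaller points on the candidate itself: as displayed it is a scalar matrix element $\langle w',\cdot\rangle$ rather than an element of $\W_{z_1,\ldots,z_n}$ (you want something of the form $E(\omega_W(v_1,z_1)\cdots\omega_W(v_n,z_n)w)$ as in Appendix \ref{properties}); and $\one_V$ lies in $V$, not in a general module $W$, so it is not an admissible argument of $\omega_W$ unless $W=V$ --- take an arbitrary nonzero $w\in W$ instead. Neither of these is fatal, but both would need to be repaired before the construction makes sense.
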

\begin{proof}
From the construction 
 of spaces for double complex for a grading-restricted vertex algebra cohomology, 
it is clear that the spaces $C^n (V, \W, \U, \F)(U_j)$,  $1 \le s \le m$ in Definition \ref{initialspace} are non-empty.
On each transversal section $U_s$,  $1 \le s \le m$, $\Phi(v_1, c_j(p_1);  \ldots; v_n, c_j(p_n))$
 belongs to the space 
$\W_{c_j(p_1),  \ldots, c_j(p_n)}$, and satisfy the $L(-1)$-derivative \eqref{lder1} and $L(0)$-conjugation
 \eqref{loconj}
properties. 
 A map $\Phi(v_1, c_j(p_1)$;  $\ldots$; $v_n, c_j(p_n))$ 
is composable with $m$ vertex operators 
with formal parameters identified with local coordinates $c_j(p'_j)$, on each transversal section $U_j$.
Note that on each transversal section, $n$ and $m$ the spaces \eqref{ourbicomplex} remain the same.   
The only difference may be constituted by the composibility conditions \eqref{Inm} and \eqref{Jnm} for  $\Phi$.

In particular, 
for  $l_{1}, \dots, l_{n}\in \Z_+$ such that $l_{1}+\cdots +l_{n}=n+m$, 
$v_{1}, \dots, v_{m+n}\in V$ and $w'\in W'$, recall \eqref{psii} that 
 \begin{eqnarray}
\label{psiii}
\Xi_{i}
&
=
&
\omega_V(v_{k_1}, c_{k_1}(p_{k_1})-\zeta_i)  \ldots  \omega_V(v_{k_i}, c_{k_i}(p_{k_i})-\zeta_i) \;\one_{V},     
\end{eqnarray}
where $k_i$ is defined in \eqref{ki}, 
for $i=1, \dots, n$, 
 depend on coordinates of points on transversal sections. 
At the same time, in the first composibility condition 
 \eqref{Inm} depends on projections $P_r(\Xi_i)$, $r\in \C$, 
 of $\W_{c(p_1), \ldots, c(p_n)}$ to $W$, and 
on arbitrary variables $\zeta_i$, $1 \le i \le m$.  
On each transversal connection $U_s$, $1 \le s \le m$,  
the absolute convergency is assumed for the series  \eqref{Inm} (cf. Appendix \ref{composable}). 
Positive integers $N^n_m(v_{i}, v_{j})$,
(depending only on $v_{i}$ and $v_{j}$) as well as $\zeta_i$, 
 for $i$, $j=1, \dots, k$, $i\ne j$, 
may vary for transversal sections $U_s$. 
Nevertheless, the domains of convergency determined by the conditions \eqref{granizy1} which have the form 
\begin{equation}
\label{popas}
|c_{m_i}(p_{m_i})-\zeta_{i}| 
+ |c_{n_i}(p_{n_i})-\zeta_{i}|< |\zeta_{i}-\zeta_{j}|,
\end{equation} 
for $m_i= l_{1}+\cdots +l_{i-1}+p$, $n=l_{1}+\cdots +l_{j-1}+q$,   
 $i$, $j=1, \dots, k$, $i\ne j$ and for $p=1, 
\dots,  l_i$ and $q=1, \dots, l_j$, 
are limited by $|\zeta_{i}-\zeta_{j}|$ in \eqref{popas} from above. 
Thus, for the intersection variation of sets of homology embeddings in \eqref{ourbicomplex}, 
 the absolute convergency condition for \eqref{Inm} is still fulfilled. 
Under intersection in \eqref{ourbicomplex}
by choosing appropriate $N^n_m(v_{i}, v_{j})$, 
one can analytically extend \eqref{Inm}  
to a rational function in $(c_1(p_{1}), \dots, c_{n+m}(p_{n+m}))$,
 independent of $(\zeta_{1}, \dots, \zeta_{n})$, 
with the only possible poles at 
$c_{i}(p_i)=c_{j}(p_j)$, of order less than or equal to 
$N^n_m(v_{i}, v_{j})$, for $i,j=1, \dots, k$,  $i\ne j$. 

As for the second condition in Definition of composibility, we note that, on each transversal section, 
the domains of absolute convergensy 
$c_{i}(p_i)\ne c_{j}(p_j)$, $i\ne j$
\[
|c_{i}(p_i)| > |c_{k}(p_j)|>0,
\]
 for 
 $i=1, \dots, m$, 
and 
$k=1+m, \dots, n+m$, 
for 
\begin{eqnarray}
\mathcal J^n_m(\Phi) &=&  
\sum_{q\in \C}\langle w',
\omega_W(v_{1}, c_1(p_1)) \ldots  \omega_W(v_{m}, c_m(p_m)) \;    
\nn
&& \qquad 
P_{q}(\Phi( v_{1+m}, c_{1+m}(p_{1+m}); \ldots; v_{n+m}, c_{n+m}(p_{n+m})) \rangle, 
\end{eqnarray}
are limited from below by the same set ot absolute values of local coordinates on 
transversal section. 
Thus, under intersection in \eqref{ourbicomplex} this condition is preserved, and 
  the sum \eqref{Jnm} can be analytically extended to a
rational function 
in $(c_{1}(p_1)$, $\dots$, $c_{m+n}(p_{m+n}))$ with the only possible poles at
$c_{i}(p_i)=c_{j}(p_j)$, 
of orders less than or equal to 
$N^n_m(v_{i}, v_{j})$, for $i, j=1, \dots, k$, $i\ne j$. 
Thus, we proved the lemma.   
\end{proof}

\begin{lemma}
\label{nezu}
The double complex \eqref{ourbicomplex} does not depend on the choice of transversal basis $\U$. 
\end{lemma}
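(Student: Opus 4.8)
The plan is to transplant to our setting the argument by which Crainic and Moerdijk~\cite{CM} prove that Čech--de~Rham cohomology of a leaf space is independent of the transversal basis, replacing the invariance of differential forms under the chart maps by the $\left(\Aut\,\Oo^{(1)}\right)^{\times n}$-invariance of the maps $\Phi$ recorded in Section~\ref{spaces}. First I would recall that every transversal section $U$ of a transversal basis carries a coordinate chart induced from a foliation chart of $\mathcal M$, so that a holonomy embedding $h\colon U\hookrightarrow V$ restricts, on the coordinate patches, to an element of $\Aut\,\Oo^{(1)}$, and that the leaf-adapted local coordinates $c_i(p_i)$ appearing in \eqref{elementw} are carried along by $h$ precisely because $h$ takes plaques to plaques. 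Hence the assignment $U\mapsto C^n(V,\W,\F)(U)$ is, up to the coordinate change induced by $h$, canonically identified along holonomy embeddings — this is the content already used in the proof of Lemma~\ref{empty}, where it was noted that on each transversal section the spaces \eqref{ourbicomplex} remain the same, the only possible discrepancy being hidden in the composability bounds \eqref{Inm} and \eqref{Jnm}, and that the $L(-1)$-derivative \eqref{lder1} and $L(0)$-conjugation \eqref{loconj} properties are coordinate-independent.

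Next I would reduce the comparison of two transversal bases $\U$ and $\U'$ to the case of an inclusion. Put $\U''=\U\cup\U'$. Since the defining property of a transversal basis (for any transversal section $U_p$ through $p$ there is a holonomy embedding $h\colon U\hookrightarrow U_p$ with $U$ in the basis and $p\in h(U)$) is monotone in the basis, $\U''$ is again a transversal basis, so it suffices to prove ${C}^n_m(V,\W,\U,\F)={C}^n_m(V,\W,\U'',\F)$, and symmetrically for $\U'$. One inclusion is immediate: as $\U\subset\U''$, the intersection in \eqref{ourbicomplex} for $\U''$ ranges over a larger index set of $m$-tuples of holonomy embeddings, so ${C}^n_m(V,\W,\U'',\F)\subseteq {C}^n_m(V,\W,\U,\F)$.

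For the reverse inclusion I would take $\Phi\in {C}^n_m(V,\W,\U,\F)$ and an arbitrary tuple $U_1\stackrel{g_1}{\hookrightarrow}\cdots\stackrel{g_{m-1}}{\hookrightarrow}U_m$ of holonomy embeddings between sections of $\U''$, and check that $\Phi$ restricted to $U_1$ satisfies Definition~\ref{initialspace} there. Each $U_j\in\U''\setminus\U$ lies in $\U'$, and by the transversal-basis property of $\U$ there is a section $\widetilde U_j\in\U$ and a holonomy embedding $h_j\colon\widetilde U_j\hookrightarrow U_j$ whose image contains the relevant points $p_1,\dots,p_n$ (shrinking if necessary and using that composition of paths composes holonomy embeddings). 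Pulling back along the $h_j$ and along the compositions $g_i\circ h_j$ reduces the conditions required on the $U_j$ to conditions on sections of $\U$, which hold by hypothesis; the $L(-1)$- and $L(0)$-properties transport verbatim. The one point requiring genuine work is that the two composability conditions survive this transport: one must choose the integers $N^n_m(v_i,v_j)$ and verify, exactly as in the proof of Lemma~\ref{empty}, that the convergence domains \eqref{popas} stay bounded above by the $|\zeta_i-\zeta_j|$ and that the analytic continuations of \eqref{Inm} and \eqref{Jnm} retain poles only at $c_i(p_i)=c_j(p_j)$ of order at most $N^n_m(v_i,v_j)$ under the finitely many chart changes induced by the $h_j$ and $g_i$. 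Granting this, $\Phi\in {C}^n_m(V,\W,\U'',\F)$, hence ${C}^n_m(V,\W,\U,\F)={C}^n_m(V,\W,\U'',\F)$, and symmetrically $={C}^n_m(V,\W,\U',\F)$, which is the assertion. The main obstacle is exactly this stability of the composability estimates under a chain of holonomy/chart changes; the rest is the refinement bookkeeping of~\cite{CM} carried over to the vertex-algebra-bundle setting.
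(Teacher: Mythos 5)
Your argument is correct and follows essentially the same route as the paper's: both rest on the defining property of a transversal basis (holonomy embeddings relating sections of the two bases) together with the fact that the intersection in \eqref{ourbicomplex} ranges over all chains of holonomy embeddings, so that the composability conditions --- already shown in the proof of Lemma \ref{empty} to be stable under the induced coordinate changes --- transport from one basis to the other. Your reduction to the union $\U''=\U\cup\U'$ and the explicit two-inclusion bookkeeping is simply a more careful rendering of what the paper compresses into its ``commutative diagrams'' remark.
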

The main statement of this section is contained in the following
\begin{proposition} 
\label{nezc}
For a quasi-conformal grading-restricted vertex algebra $V$ and it module $W$,    
the construction \eqref{ourbicomplex} is canonical, i.e.,
 does not depend on foliation preserving choice of local coordinates on $\mathcal M/\F$.   
\end{proposition}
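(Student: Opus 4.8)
The plan is to reduce canonicity to the coordinate-independence of the individual building blocks, namely the $\W$-valued rational sections \eqref{elementw} and the vertex operators \eqref{poper}, a property already recorded just before Definition \ref{initialspace}. A foliation-preserving change of local coordinates on $\mathcal M/\F$ does not, a priori, make literal sense on $\mathcal M/\F$ itself since it is not a manifold; but by construction every datum entering \eqref{ourbicomplex} lives on transversal sections $U_j \in \U$, each of which carries a chart induced from a chart of $\mathcal M$ (Subsection \ref{basis}). Hence such a change amounts, on each $U_j$, to replacing the local coordinate $c_i(p_i)$ by $\widetilde c_i(p_i) = \rho_i(c_i(p_i))$ for a germ of biholomorphism $\rho_i$ fixing $p_i$, i.e.\ an element of $\mathrm{Aut}\, \Oo^{(1)}$; the foliation-preserving hypothesis is exactly what guarantees that these germs are compatible with the holonomy embeddings $h_i$ (they intertwine the induced charts), so that they act simultaneously and consistently on all sections appearing in the intersection \eqref{ourbicomplex}.

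First I would verify that, under $(c_1(p_1), \ldots, c_n(p_n)) \mapsto (\rho_1(c_1(p_1)), \ldots, \rho_n(c_n(p_n)))$, the space $\W_{c_1(p_1), \ldots, c_n(p_n)}$ is carried isomorphically onto $\W_{\widetilde c_1(p_1), \ldots, \widetilde c_n(p_n)}$: this is precisely the $(\mathrm{Aut}\, \Oo^{(1)})^{\times n}$-torsor structure recalled from \cite{BZF}, in which the tensor factors $dz_i^{\wt v_i}$ are inserted so that $dz_i^{\wt v_i} \otimes v_i$ transforms as a section and the matrix element $\langle w', \Phi \rangle$ is invariant. The quasi-conformality of $V$ is what makes the $\mathrm{Aut}\, \Oo$-action on $V$ well defined (through $L(0)$ and $L(j)$, $j \ge 1$), so this step is legitimate. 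Next I would check that the two composability conditions of Appendix \ref{composable} are intrinsic: the existence of the integers $N^n_m(v_i, v_j)$, the absolute convergence of the series \eqref{Inm} and \eqref{Jnm}, and the bounds on pole orders at $c_i(p_i) = c_j(p_j)$ are all preserved because the $\rho_i$ are biholomorphisms fixing the points, hence they preserve the diagonal loci and map rational form-valued functions with poles of a given order to rational form-valued functions with poles of the same order. The domains of absolute convergence do change, but the same device used in the proof of Lemma \ref{empty} — choosing the $N^n_m(v_i, v_j)$ appropriately and analytically continuing to a rational function independent of the $\zeta_i$ — shows that the conditions survive; the estimates of the form \eqref{popas} are re-run with $c_i(p_i)$ replaced by $\rho_i(c_i(p_i))$, using that $\rho_i$ is a local biholomorphism at $p_i$.

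Putting these together, each $C^n(V, \W, \F)(U_j)$ is mapped onto the corresponding space built from the new coordinates, and since the $\rho_i$ act compatibly with the holonomy embeddings the intersection \eqref{ourbicomplex} defining $C^n_m(V, \W, \U, \F)$ is carried onto the analogous intersection for the new coordinates; combined with Lemma \ref{nezu}, which removes the dependence on the transversal basis $\U$, this yields canonicity. The step I expect to be the main obstacle is the second one: making precise that composability is a coordinate-free notion, i.e.\ that the change of variables $\rho_i$ neither introduces spurious poles off the diagonals nor destroys absolute convergence on the relevant polydiscs. This is handled exactly in parallel with the intersection argument already carried out in the proof of Lemma \ref{empty}, but it has to be written out with the coordinate transformations inserted explicitly.
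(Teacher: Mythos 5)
Your proposal is correct and follows essentially the same route as the paper: the paper likewise reduces canonicity to the invariance of elements of $\W_{z_1,\ldots,z_n}$ (and of the vertex operators, viewed as maps composable with infinitely many vertex operators) under $\left({\rm Aut}\; \Oo^{(1)}\right)^{\times n}$, using the quasi-conformal structure to realize the coordinate change through the operator $P(f(\zeta))$ and cancelling the Jacobian factors from the $dz_i^{\wt(v_i)}$ differentials against the $L(0)$-conjugation property. The only difference is presentational: where you appeal to the torsor structure of \cite{BZF} and spell out the preservation of the composability conditions, the paper instead writes out the matrix-element computation explicitly and treats composability more briefly.
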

The proofs of Lemmas \ref{empty}, \ref{nezu}, and Proposition \ref{nezc} is contained in Appendix \ref{proof}. 
\begin{remark}
The condition of quasi-conformality is necessary in the proof of invariance of elements of the space 
$\W_{z_1, \ldots, z_n}$ 
with respect to a vertex algebraic representation (cf. Appendix \ref{grading}) of the group 
$\left({\rm Aut}\; \Oo^{(1)}\right)^{\times n}$. 
In what follows, when it concerns the spaces \eqref{ourbicomplex} we will always assume the quasi-conformality of $V$. 
\end{remark}
 A generalization of proofs of Lemmas \ref{empty}, \ref{nezu}, \ref{subset} and Proposition \ref{nezc}
 for the case of a arbitrary codimension foliation on a smooth $n$-dimensional manifold 
will be given elsewhere. 
The proof of Proposition \ref{nezc} is contained in Appendix \ref{proof}. 

Let $W$ be a grading-restricted $V$ module. 
Since for $n=0$, $\Phi$ does not include variables, and  
 due to Definition \ref{composabilitydef} of the composability, we can put: 
\begin{equation}
\label{proval}
{C}_{k}^{0}(V, \W, \F)= W,  
\end{equation}
for $k\ge 0$. Nevertheless, according to our Definition \ref{ourbicomplex}, mappings that belong to \eqref{proval}
are assumed to be composable with a number of vertex operators with depending on local coordinates 
of $k$ points on $k$ transversal sections.  
\begin{lemma}
\label{subset}
\begin{equation}
\label{susus}
{C}_{m}^{n}(V, \W, \F)\subset {C}_{m-1}^{n}(V, \W, \F).     
\end{equation}
\end{lemma}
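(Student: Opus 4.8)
The plan is to show the inclusion $C^n_m(V,\W,\F) \subset C^n_{m-1}(V,\W,\F)$ directly from the definition \eqref{ourbicomplex} of these spaces as intersections over tuples of holonomy embeddings. Recall that $C^n_m(V,\W,\U,\F)$ is the intersection of the spaces $C^n(V,\W,\F)(U_j)$ taken over all $m$-tuples of holonomy embeddings $U_1 \stackrel{h_1}{\hookrightarrow} \cdots \stackrel{h_{m-1}}{\hookrightarrow} U_m$, while $C^n_{m-1}(V,\W,\U,\F)$ is the analogous intersection over all $(m-1)$-tuples. The key observation is that every $(m-1)$-tuple of holonomy embeddings $U_1 \stackrel{h_1}{\hookrightarrow} \cdots \stackrel{h_{m-2}}{\hookrightarrow} U_{m-1}$ can be prolonged to an $m$-tuple: since the composition of paths induces a composition of holonomy embeddings (cf. Subsection \ref{basis}), one may append the identity holonomy embedding $U_{m-1} \stackrel{\mathrm{id}}{\hookrightarrow} U_{m-1}$ (the holonomy of the constant path), or, more robustly, any holonomy embedding out of $U_{m-1}$ furnished by the transversal basis. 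Hence the index set defining $C^n_{m-1}$ embeds into the index set defining $C^n_m$, so the intersection defining $C^n_m$ is over a larger family, and therefore $C^n_m \subseteq C^n_{m-1}$.

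Concretely, I would argue as follows. Let $\Phi \in C^n_m(V,\W,\U,\F)$. By Definition \ref{defspace}, $\Phi$ lies in $C^n(V,\W,\F)(U_j)$ for every $U_j$ occurring in every $m$-tuple of holonomy embeddings. Given any $(m-1)$-tuple $U_1 \stackrel{h_1}{\hookrightarrow} \cdots \stackrel{h_{m-2}}{\hookrightarrow} U_{m-1}$, extend it to the $m$-tuple $U_1 \stackrel{h_1}{\hookrightarrow} \cdots \stackrel{h_{m-2}}{\hookrightarrow} U_{m-1} \stackrel{\mathrm{id}}{\hookrightarrow} U_{m-1}$. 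Since $\Phi \in C^n_m$, it belongs to $C^n(V,\W,\F)(U_j)$ for each section $U_j$ in this extended tuple, in particular for $U_1, \ldots, U_{m-1}$. As the $(m-1)$-tuple was arbitrary, $\Phi$ lies in the intersection \eqref{ourbicomplex} with $m$ replaced by $m-1$, i.e. $\Phi \in C^n_{m-1}(V,\W,\U,\F)$. Note that by Lemma \ref{nezu} neither space depends on the choice of transversal basis, so the argument is basis-independent; for $n=0$ the statement reduces, via \eqref{proval}, to the trivial inclusion $W \subseteq W$.

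The one point that requires care — and which I expect to be the main obstacle — is verifying that the composability conditions are genuinely inherited: a map $\Phi$ composable with $m$ vertex operators attached to $m$ transversal sections (conditions \eqref{Inm} and \eqref{Jnm} on the relevant sums, with the pole-order bounds $N^n_m(v_i,v_j)$ and the convergence domains \eqref{popas}) must still be composable with the $m-1$ vertex operators attached to the first $m-1$ sections. This follows because dropping the last vertex operator only relaxes the constraints: the sums $\mathcal J^n_m(\Phi)$ and $\mathcal I^n_m(\Phi)$ for $m-1$ insertions are obtained from those for $m$ insertions by specialization, the relevant domains of absolute convergence \eqref{popas} for fewer points strictly contain those for more points, and the required positive integers $N^n_{m-1}(v_i,v_j)$ may be taken equal to (or larger than) $N^n_m(v_i,v_j)$. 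This is exactly the kind of monotonicity in the number of insertions already exploited in the proof of Lemma \ref{empty}, so I would invoke that argument rather than redo the convergence estimates. This completes the proof.
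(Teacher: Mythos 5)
Your proof is correct and its core---showing that composability with $m$ vertex operators implies composability with $m-1$, because the convergence domains and pole-order requirements entering $\mathcal I^n_{m-1}(\Phi)$ and $\mathcal J^n_{m-1}(\Phi)$ are subsumed by (are less restrictive than) those entering $\mathcal I^n_m(\Phi)$ and $\mathcal J^n_m(\Phi)$---is exactly the argument the paper gives in Appendix \ref{proof}. Your additional observation that every $(m-1)$-tuple of holonomy embeddings prolongs to an $m$-tuple, so that the intersection \eqref{ourbicomplex} defining ${C}^{n}_{m}$ runs over a larger index family than that defining ${C}^{n}_{m-1}$, is left implicit in the paper's proof (which disposes of everything except the composability conditions with the remark that ``$n$ is the same for both spaces''), but it is consistent with, and arguably completes, that argument.
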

The proof of this Lemma is contained in Appendix \ref{proof}. 
For our further purposes we have to define spaces suitable for the definition of 
a double complex with a fixed point. 
Such double complex will be needed for the construction of 
vertex algebra cohomologies of $\mathcal M/\F$, in particular, for $H^1_m(V, \W, \F)$, $m \ge 0$ (see Section \ref 
{coboundary}).  
\begin{definition}
\label{fixedpointphi}
Let us fix a point $p'_r$ on the transversal section $U_r \in \U$, $r \ge 1$. 
Assuming $k \ge 0$, $n \ge 0$, consider the space 
$C^n(p'_r; V, \W, \F)(U_r)$, of linear mappings    
\begin{equation}
\label{mapy}
 \Phi: V^{\otimes n} \rightarrow \W_{c_1(p_1), \ldots,  c_{n}(p_n)},   
\end{equation}
composable with $k$ vertex operators with formal parameters identified with local coordinates
   $\{c_1(p'_1), \ldots, c_r(p'_r)|_{p'_r}, \ldots, c_n(p'_k) \}$, 
 on each of 
$k$ sections on a transversal basis $\U$. 
\end{definition}
We assume that, for the intersection points $p'_j$ of each $U_j$, $1 \le j \le k$,  
 we are able to define a coordinate function $t_j(p'_j)$ around $p'_j$ such that it covers the whole $U_j$. 
Thus, the holonomy embeddings $h_j$ deliver a map of local coordinate functions and vertex operators \ref{poper},   
\[
 h_j: t_j(p'_j) \to  t_{j+1}(p'_{j+1}), 
\]
and we have a sequence of mappings 
\begin{equation}
\label{holseq}
p'_{1} \stackrel{h_1} {\to }  \ldots \stackrel{h_{r-1}} {\to} p'_r 
\stackrel{h_{r}} {\to}   \ldots  \stackrel {h_{k-1}}{\to } p_k.  
\end{equation}
Let us now introduce the following spaces: 
\begin{definition}
\label{defcomplex}
For $n\ge 0$, and $k\ge 1$, consider the space  
\begin{equation}
\label{ourbicomplex11}
 {C}^{n}_{k}(p'_r; V, \W, \U, \F) =  \bigcap_{ 
p'_{1} \stackrel{h_1} {\to }  \ldots \stackrel{h_{r-1}} {\to} p'_r 
\stackrel{h_{r}} {\to}   \ldots  \stackrel {h_{k-1}}{\to } p_k
\atop j\in \{1, \ldots, k\} } 
 C^{n}(p'_r; V, \W, \F) (U_j),     
\end{equation}
where the intersection ranges over all possible $k$-sequences \eqref{holseq} of holonomy mappings 
 $h_i$, $i\in \{1, \ldots, k\}$  
among points on transversal sections of the basis $\U$ with the fixed point $p_r$.   
\end{definition}
\begin{remark}
\label{remseq}
Note that in this Definition it is assumed that points $\{p'_j\}$, $1 \le {r-1}$, $ r+1 \le k$, 
  in sequences \eqref{holseq} of holonomy mappings  
are free for moving along corresponding sections $\{ U_j\}$ 
  of $\U$. 
\end{remark}
Similar to the proof of Proposition \ref{nezc}, one shows that the spaces \eqref{ourbicomplex11} do not depend on the 
choice of $\U$ and coordinates on $\mathcal M/\F$, thus we then omit $\U$ in notations. 
\section{Coboundary operators and cohomology of codimension one foliations}
\label{coboundary}
Though the question of introduction a product on the space $\W_{z_1, \ldots, z_n}$ is not developed yet 
(private communication with Y.-Zh. Huang). 
Nevertheless, one can introduce an internal product $\circ$ which allow to look at the action of 
 coboundary operator as the integrability condition on one forms defining a foliation
 and related to the Godbillon-Vey cohomology class for codimension one foliations \cite{G}.     

The coboundary operator is introduced as the form of a multi-point vertex algebra connection
(cf. Definition \ref{locus} in Section \ref{connections}): 
\begin{equation}
\label{deltaproduct}
\delta^n_m \Phi = G(p_1, \ldots, p_{n+1}),     
\end{equation}
\begin{eqnarray}
\label{hatdelta1}
 G(p_1, \ldots, p_{n+1}) 
&=& 
\langle w', \sum_{i=1}^{n}(-1)^{i} \; \Phi\left( \omega_V\left(v_i, c_i(p_i) - c_{i+1}(p_{i+1})) 
 v_{i+1} \right)  
 \right) \rangle,  
\\
\nonumber
&+& \langle w', \omega_W \left(v_1, c_1(p_1)  \right) \; \Phi (v_2, c_2(p_2); \ldots; v_n, c_n(p_n) )  
\rangle   
\nn
\nonumber
 &+& (-1)^{n+1} 
\langle w', 
 \omega_W(v_{n+1}, c_{n+1} (p_{n+1})) 
\; \Phi(v_1, c_1(p_2); \ldots; v_n, c_n(p_n)) \rangle,  
\end{eqnarray}
for arbitrary $w' \in W'$ (dual to $W$).
Note that it is assumed that the coboundary operator does not affect $dc(p)^{\wt(v_i)}$-tensor multipliers in $\Phi$. 
\begin{remark}
Following the construction of \cite{Huang},
 let us introduce the coboundary operator 
${\delta}^{n}_{k}$ acting on elements $\Phi \in C^{n}_{k}(V, \W, \F)$ of the spaces \eqref{ourbicomplex}, 
which has the integrability conditition form 
 \cite{G} with respect to a product: 
\begin{equation}
\label{deltaproduct}
\delta^n_m \Phi =\mathcal E \circ \Phi,   
\end{equation}
where 
\begin{eqnarray*}
\mathcal E &=&  \left( E^{(1)}_W,\; \sum\limits_{i=1}^n (-1)^i \; E^{(2)}_{V; \one_V},  \; E^{W; (1)}_{W V} \right),  
\end{eqnarray*} 
 the product $\circ$ is given by 
\[
\circ = \sum\limits_{j=0}^{n+1} \circ_{j},     
\]
where the elements
 $E^{(1)}_{W}$, $E^{W; (1)}_{WV}$,  $E^{(2)}_{V; \one_V}$, and multiplications  
$\circ_i$ are defined in Appendix \ref{properties}, 
\begin{eqnarray}
\label{hatdelta}
\nonumber 
{\delta}^{n}_{k} \Phi& =& E^{(1)}_{W}\circ_{0
} \Phi  
+\sum_{i=1}^{n}(-1)^{i} E^{(2)}_{V; \one_V}   \circ_{i} \Phi
 + (-1)^{n+1} 
E^{W; (1)}_{WV}\circ_{m+1
} \Phi. 
\end{eqnarray}
\end{remark}
%
\begin{remark}
Inspecting construction of the double complex spaces \eqref{ourbicomplex}
 we see that the action \eqref{hatdelta1} of the 
$\delta^n_m$ on an element of $C^n_m(V, \W, \F)$ provides a coupling (in terms of $\W_{z_1, \ldots, z_n}$-valued 
rational functions) of vertex operators
 taken at the local coordinates 
$c_i(z_{p_i})$, $0 \le i \le k$,  
at the vicinities of the same points $p_i$ taken on transversal sections for $\F$,  
with elements of $C^n_{m-1}(V, \W, \F)$ taken at   
points at the local coordinates 
$c_i(z_{p_i})$, $0 \le i \le n$ on $\mathcal M$ for points $p_i$ considered on the leaves of $\mathcal M/\F$.  
\end{remark}
\begin{remark}
We also mention that \eqref{deltaproduct} can be written completely in terms of intertwining operators (cf. Appendix \ref 
{grading}) 
in the form 
\[
\delta^n_m \Phi= \sum\limits_{i=1}^3  
\langle w', 
  e^{\xi_i L_W(-1)}\; \omega^{W}_{WV} \left( \Phi_i \right) u_i \rangle, 
\]
for some $\xi_i \in \C$, and $u_i \in V$, and $\Phi_i$ obvious from \eqref{deltaproduct}.  
Namely, 
\begin{eqnarray*}
&& \delta^n_m \Phi=  
 \langle w',  e^{ c_1(p_1) L(-1)_{W} } \;  \omega^{W}_{WV}   
\left(  \Phi \left( v_2, c_2(p_2); \ldots; v_n, c_{n+1} (p_{n+1}), - c_1(p_1)  \right) v_1  \right.   
\rangle 
\nn
&&
+\sum\limits_{i=1}^n (-1)^i e^{\zeta L_W(-1)}   
\langle w', \omega^{W}_{WV} \left(  
\Phi \left( \omega_V \left(v_i, c_i(p_i) - c_{i+1}(p_{i+1})\right), -\zeta \right)  \one_V \rangle \right) 
\nn
&&
=   
 \langle w',  e^{ c_{n+1}(p_{n+1}) L(-1)_{W} } \;  \omega^{W}_{WV}    
\left( \Phi \left( v_1, c_1(p_1); \ldots; v_{n}, c_n (p_n), - c_{n+1}(p_{n+1})  \right) v_{n+1}     
\rangle \right),   
\end{eqnarray*}
for an arbitrary $\zeta\in \C$. 
\end{remark}
\subsection{Complexes of transversal connection}  
\label{comtrsec}
In addition to the double complex $(C^n_m(V$, $\W$, $\F)$, $\delta^n_m)$ provided by \eqref{ourbicomplex} and 
\eqref{deltaproduct},
 there exists an exceptional short double complex which we call transversal connection complex.  
 We have  
\begin{lemma}
\label{lemmo}
 For $n=2$, and $k=0$, there exists a subspace
 $C^{0}_{ex}(V, \W, \F)$ 
\[
{C}_{m}^{2}(V, \W, \F) \subset C^{0}_{ex}(V, \W, \F) \subset C_{0}^{2}(V, \W, \F), 
\]
 for all $m \ge 1$, with the action of coboundary operator 
${\delta}^{2}_{m}$ defined.   
\end{lemma}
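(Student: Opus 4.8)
The plan is to exhibit the intermediate space $C^{0}_{ex}(V,\W,\F)$ directly as the subspace of $C^{2}_{0}(V,\W,\F)$ consisting of those maps $\Phi\colon V^{\otimes 2}\to\W_{c_1(p_1),c_2(p_2)}$ whose associated form of a holomorphic connection, in the sense of \eqref{gform}, reduces to the form of a \emph{transversal} connection \eqref{transa}; that is, the terms coming from the summation over the locus $q,q'\in\mathcal X_0$ in \eqref{locus} vanish identically on these maps, so that $\delta^{2}_m$ acts as the transversal coboundary $G_{tr}(p,p')$ of Definition \ref{transcon}. First I would verify that $C^{0}_{ex}(V,\W,\F)$ so defined is linear and is genuinely contained in $C^{2}_{0}(V,\W,\F)$: this is immediate, since the defining condition is a linear constraint on elements of the already-constructed space $C^{2}_{0}(V,\W,\F)$.

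Next I would establish the inclusion ${C}^{2}_{m}(V,\W,\F)\subset C^{0}_{ex}(V,\W,\F)$ for every $m\ge 1$. By Lemma \ref{subset} we already have the chain ${C}^{2}_{m}(V,\W,\F)\subset\cdots\subset {C}^{2}_{1}(V,\W,\F)\subset {C}^{2}_{0}(V,\W,\F)$, so it remains to check that a map composable with $m\ge 1$ vertex operators attached to points on transversal sections satisfies the transversality constraint \eqref{transa0}. Here the point is that, for $n=2$ and $k\ge 1$, the composability conditions \eqref{Inm} and \eqref{Jnm} force the matrix elements $\langle w',\Phi\rangle$ to extend to rational forms whose only poles lie on the diagonals $c_i(p_i)=c_j(p_j)$; evaluating the two ``boundary'' terms of \eqref{hatdelta1}, namely the $\omega_W(v_1,c_1(p_1))\Phi(\cdots)$ and $(-1)^{n+1}\omega_W(v_{n+1},c_{n+1}(p_{n+1}))\Phi(\cdots)$ contributions, against the locus-sum term and using the $L(-1)$-derivative and $L(0)$-conjugation properties recorded in the proof of Lemma \ref{empty}, one sees that the non-transversal part cancels, leaving exactly $G_{tr}$. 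This is where I would spend the most care.

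To finish, I would check that the coboundary operator ${\delta}^{2}_m$ is well defined on $C^{0}_{ex}(V,\W,\F)$, i.e. that it lands in the appropriate target space: since on $C^{0}_{ex}$ the operator reduces to the transversal-connection form $G_{tr}(p,p')$, and since the space $G^{2}_{tr}$ of transversal connection forms of Definition \ref{transcon} is stable under the holonomy-embedding pullbacks $h^*$ used to intersect over tuples in \eqref{ourbicomplex}, the value ${\delta}^{2}_m\Phi$ again satisfies the composability and rationality constraints. Combining this with the two inclusions yields the asserted chain, and the short ``transversal connection complex'' is then $\bigl(C^{0}_{ex}(V,\W,\F),\,{\delta}^{2}_m\bigr)$ sitting between $C^{2}_m$ and $C^{2}_0$.

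\textbf{Main obstacle.} The delicate step is the cancellation of the non-transversal terms in \eqref{hatdelta1} when passing from $C^{2}_m$ to $C^{0}_{ex}$: one must control the analytic continuation of the locus-sum in \eqref{locus} and show that, under the pole-order bounds $N^n_m(v_i,v_j)$ supplied by composability with $m\ge 1$ vertex operators, the two vertex-operator boundary terms combine with the locus-sum to produce precisely the transversal form \eqref{transa}, with no residual contribution. Everything else is either linear bookkeeping or a direct appeal to Lemma \ref{subset} and to the convergence estimates \eqref{popas} already used in the proof of Lemma \ref{empty}.
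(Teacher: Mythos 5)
There is a genuine gap, and it lies in the definition you choose for the intermediate space and in the cancellation you rely on. You define $C^{0}_{ex}(V,\W,\F)$ as the set of maps whose connection form reduces to the transversal form \eqref{transa}, i.e.\ for which the locus-sum terms of \eqref{locus} vanish, and you then claim that for any $\Phi$ composable with $m\ge 1$ vertex operators the two $\omega_W(\cdot)\Phi(\cdot)$ boundary terms of \eqref{hatdelta1} combine with the terms $\Phi(\omega_V(v_i,c_i(p_i)-c_{i+1}(p_{i+1}))v_{i+1})$ so that ``the non-transversal part cancels.'' No such cancellation is available: composability controls convergence and pole orders of the matrix elements, not the algebraic vanishing of the middle terms of the coboundary, and if the cancellation you assert did hold then $\delta^{2}_{m}\Phi$ would always be a transversal form for every $\Phi\in C^{2}_{m}(V,\W,\F)$, which would collapse the coboundary on the whole complex. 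So the inclusion ${C}^{2}_{m}(V,\W,\F)\subset C^{0}_{ex}(V,\W,\F)$ fails for the space as you have defined it, and the step you flag as the ``main obstacle'' is not merely delicate but unobtainable.

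The paper proceeds differently. It defines the intermediate space (written ${C}^{2}_{ex}(V,\W,\F)$ there) not by a vanishing condition but by an \emph{analytic} one: $\Phi\in C^{2}_{0}(V,\W,\F)$ belongs to it when the two partial sums $G_1$ in \eqref{pervayaforma} and $G_2$ in \eqref{vtorayaforma} --- each consisting of one $\omega_W(\cdot)\Phi(\cdot)$ term and one $\Phi(P_r(\omega_V\cdots\one_V),\zeta;\cdot)$ term, i.e.\ the first two and the last two summands of \eqref{hatdelta1} --- converge absolutely in the indicated regions and extend to rational form-valued functions with poles only at $c_1(p_1)=0$, $c_2(p_2)=0$, $c_1(p_1)=c_2(p_2)$. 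That is precisely the minimal condition under which $\delta^{2}_{ex}$ makes sense, and it is strictly weaker than composability with $m\ge 1$ vertex operators, so the inclusion ${C}^{2}_{m}(V,\W,\F)\subset {C}^{2}_{ex}(V,\W,\F)$ follows directly from Proposition \ref{comp-assoc}, with no cancellation argument needed. The connection to \eqref{transa} is only that $G_1$ and $G_2$ are \emph{sums of the shape} of transversal connection forms, not that the remaining terms vanish. To repair your argument, replace your vanishing condition by the convergence-and-rational-extension condition on $G_1$ and $G_2$ and invoke Proposition \ref{comp-assoc} for the left inclusion; the right inclusion and the definedness of the coboundary then come for free from the defining condition itself.
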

\begin{proof}
Let us consider the space $C_{0}^{2}(V, \W, \F)$.
%
vertex operators composable. 
Indeed, the space $C_{0}^{2}(V, \W, \F)$ contains elements of $\W_{c_1(p_1), c_2(p_2)}$ so that the action  
of $\delta_{0}^{2}$ is zero. 
Nevertheless, as for $\mathcal J^n_m(\Phi)$ in \eqref{Jnm}, Definition \ref{composabilitydef}, let us consider sum of  
projections 
\[
P_r: \W_{z_i, z_j} \to W_r, 
\]
for $r\in \C$, and $(i, j)=(1,2), (2, 3)$,   
so that the condition \eqref{Jnm}  is satisfied for some connections similar to the action \eqref{Jnm} of $\delta^2_0$. 
Separating the first two and the second two summands in \eqref{hatdelta1}, we find that for a subspace 
of ${C}_{0}^{2}(V, \W, \F)$, which we denote as 
${C}_{ex}^{2}(V, \W, \F)$, consisting of three-point connections $\Phi$ such that
 for $v_{1}$, $v_{2}$, $v_{3} \in V$, 
$w'\in W'$, and arbitrary $\zeta \in \C$, the following forms of connections 
\begin{eqnarray}
\label{pervayaforma}
&& G_1(c_1(p_1), c_2(p_2), c_3(p_3))
\nn
&&
= \sum_{r\in \C} \Big( \langle w', E^{(1)}_{W} \left(  v_{1}, c_1(p_{1}\right);      
P_{r} \left(   \Phi\left(v_{2}, c_2(p_{2})-\zeta;  v_{3}, c_3(p_{3}) - \zeta  \right)  \right) \rangle   
\nn
&&\quad +\langle w', 
\Phi \left( v_{1}, c_1(p_{1});  P_{r} \left(E^{(2)}_{V}
\left(v_{2}, c_2(p_{2})-\zeta; v_{3}, c_3(z_{3} \right)-\zeta; \one_V \right),    
 \zeta \right) 
\rangle \Big) 
\nn
&&
\nn
&& = \sum_{r\in \C} \big( \langle w', \omega_{W} \left( v_{1}, c_1(p_{1}) \right)\;       
 P_{r}\left( \Phi\left(v_{2}, c_2(p_{2})-\zeta; v_{3}, c_3(p_{3}) - \zeta \right) \right) \rangle 
\nn
&&\quad +\langle w', 
\Phi \left( v_{1}, c_1(p_{1});  P_{r} \left(  \omega_V  
\left(v_{2}, c_2(p_{2})-\zeta\right)  \omega_V \left( v_{3}, c_3(z_{3})-\zeta \right) \one_V \right),     
 \zeta \right)
\rangle\big),  
\nn
\end{eqnarray}
and 
\begin{eqnarray}
\label{vtorayaforma}
&&
G_2( c_1(p_1), c_2(p_2), c_3(p_3) ) 
\nn 
 && \qquad =\sum_{r\in \C} \Big( \langle w', 
 \Phi \left(  P_{r} \left(   E^{(2)}_{V} \left( v_{1}, c_1(p_{1}) -\zeta, v_2, c_2(p_{2})-\zeta; \one_V \right) \right), 
\zeta;  
 v_{3}, c_3(p_{3}) \right) \rangle  
\nn
&&\quad + \langle w',  
E^{W; (1)}_{WV} \left(    P_{r} \left(    \Phi \left(   v_{1},  c_1(p_{1})-\zeta;   v_{2}, c_2(p_{2}) -\zeta \right), 
\zeta ;   
 v_{3}, c_3(p_{3}) \right) \right) \rangle \Big) 
\nn
&&
\nn
&&
 = \sum_{r\in \C}\big(\langle w', 
 \Phi \left(P_{r}( \omega_V \left(v_{1}, c_1(p_{1})-\zeta\right) \omega_V\left(v_2, c_2(p_{2})-\zeta) \one_V, 
\zeta \right));  
 v_{3}, c_3(p_{3}) \right) \rangle  
\nn
&&\quad + \langle w', 
 \omega_V \left(v_{3}, c_3(p_{3}) \right) \; P_{r} \left(    \Phi \left(v_{1}, c_1(p_{1})-\zeta; v_{2}, c_2(p_{2})-\zeta
 \right)  
\right)
  \rangle \big),  
\end{eqnarray}
are absolutely convergent in the regions 
\[
|c_1(p_{1})-\zeta|>|c_2(p_{2})-\zeta|,  
\]
\[
 |c_2 (p_{2})-\zeta|>0, 
\]
\[
|\zeta-c_3(p_{3})|>|c_1(p_{1})-\zeta|, 
\]
\[
|c_2(p_{2})-\zeta|>0,
\]
where $c_i$, $1 \le i \le 3$ are coordinate functions, 
 respectively, and can be analytically extended to  
rational form-valued functions in $c_1(p_{1})$ and $c_2(p_{2})$ with the only possible poles at
$c_1(p_{1})$, $c_2(p_{2})=0$, and $c_1(p_{1})=c_2(p_{2})$.
Note that \eqref{pervayaforma} and \eqref{vtorayaforma} constitute the first two and the last two terms of 
\eqref{hatdelta1} correspondingly.  
According to Proposition \ref{comp-assoc} (cf. Appendix \ref{composable}),  
 ${C}_{m}^{2}(V, \W, \F)$ is a subspace of ${C}_{ex}^{2}(V, \W, \F)$,  for $m\ge 0$,
and $\Phi \in {C}_{m}^{2}(V, \W, \F)$ are composable with $m$ vertex operators. 
Note that \eqref{pervayaforma} and \eqref{vtorayaforma} represent sums of forms $G_{tr}(p, p')$ 
of transversal connections 
\eqref{transa} (cf. Section \ref{firstcohomology}). 
\end{proof}
\begin{remark}
It is important to mention that, according to general principle, abserved in \cite{BG}, 
for non-vanishing $G(c(p), c(p'), c(p''))$,  
there exists an invariant structure, e.g., a cohomological class. 
In our case, it appears as a non-zero subspaces 
${C}_{m}^{2}(V, \W, \F) \subset {C}_{ex}^{2}(V, \W, \F)$ in ${C}_{0}^{2}(V, \W, \F)$. 
\end{remark}

Then we have 
\begin{definition}
\label{cobop}
 The coboundary operator 
\begin{equation}
\label{halfdelta}
{\delta}^{2}_{ex}: {C}_{ex}^{2}(V, \W, \F)
\to {C}_{0}^{3}(V, \W, \F),
\end{equation}
is defined 
 by 
three point connection of the form 
\begin{equation}
\label{ghalfdelta}
\delta^2_{ex} \Phi= G_{ex}(p_1, p_2, p_3),  
\end{equation}
\begin{eqnarray}
\label{ghalfdelta1}
G_{ex}(p_1, p_2, p_3) &=&  
\langle w', \omega_{W} \left(v_{1}, c_1(p_1)\right) \; 
\Phi\left(v_{2}, c_2(p_2); v_{3}, c_3(p_3) \right) \rangle 
\nn
&&
\quad \quad 
- 
 \langle w',  
\Phi \left(  \omega_{V}( v_{1}, c_1(p_1))\; \; \omega_V (v_{2}, c_2(p_2) ) \one_V; v_{3}, c_3(p_3) \right)  
\rangle
\nn
 &&\quad
+ 
 \langle w', \Phi(v_{1}, c_1(p_1);  \; \omega_{V} (v_{2}, c_2(p_2))\;  \omega_V( v_{3}, c_3(p_3)) \one_V) 
 \rangle 
\nn
 &&\quad \quad 
+ 
 \langle w',  
 \omega_W (v_{3}, c_3(p_3)) \; \Phi \left(v_{1}, c_1(p_1); v_{2}, c_2(p_2) \right)  
\rangle,
\end{eqnarray}
for $w'\in W'$,
$\Phi\in {C}_{ex}^{2}(V, \W, \F)$,
$v_{1}, v_{2}, v_{3}\in V$ and $(z_{1}, z_{2}, z_{3})\in F_{3}\C$.   
\end{definition}
\begin{remark}
Similar to \eqref{deltaproduct}, \eqref{ghalfdelta} can also be written in Frobenius form: 
\begin{eqnarray}
\label{halfdelta1}
&& {\delta}^{2}_{ex} \Phi = \mathcal E_{ex} \circ_{ex} \Phi, 
\end{eqnarray}
where 
\begin{equation}
\label{mathe2}
\mathcal E_{ex} = \left( E^{(1)}_W,\; \sum\limits_{i=1}^2 (-1)^n E^{(2)}_{V; \one_V}, \; E^{W; (1)}_{W V} \right),  
\end{equation}
with the product 
\[
\circ_{ex}= \sum_{i=0}^3 \circ_j, 
\]
\begin{eqnarray*}
{\delta}^{2}_{ex} \Phi &=& E^{(1)}_{W} \circ_{0} \Phi 
+ \sum\limits_{i=1}^2 (-1)^i  E^{(2)}_{V, \one_V}  \circ_i \Phi
+ E^{W; (1)}_{WV}\circ_{3}\Phi,
\nn
 &
=&
\langle w', E^{(1)}_{W} \left(  v_{1}, c_1(p_1); \; 
\Phi  \left(v_{2}, c_2(p_2); v_{3}, c_3(p_3) \right) \rangle \right)
\nn
&&
\quad \quad 
- 
 \langle w',  
\Phi \left(E^{(2)}_{V}(v_{1}, c_1(p_1);   v_{2}, c_2(p_2); \one_V ; v_{3}, c_3(p_3)\right) 
\rangle
\nn
 &&\quad 
+ 
\langle w', \Phi(v_{1}, c_1(p_1); E^{(2)}_{V}(v_{2}, c_2(p_2); v_{3}, c_3(p_3); \one_V) 
 \rangle 
\nn
 &&\quad \quad 
+ 
 \langle w',  
E^{W; (1)}_{WV} \; \Phi \left(v_{1}, c_1(p_1); v_{2}, c_2(p_2)); v_{3}, c_3(p_3) \right) 
\rangle. 
\nn 
&& 
\nn
&&
\end{eqnarray*}
\end{remark}
Then we have 
\begin{proposition}
\label{cochainprop}
The operators \eqref{hatdelta} and \eqref{halfdelta} provide the chain-cochain complexes    
\begin{equation}
\label{conde}
{\delta}^{n}_{m}: C_{m}^{n}(V, \W, \F)  
\to C_{m-1}^{n+1}(V, \W, \F),   
\end{equation}  
\begin{equation}
\label{deltacondition}  
{\delta}^{n+1}_{m-1} \circ {\delta}^{n}_{m}=0,  
\end{equation} 
\[
{\delta}^{2}_{ex}\circ {\delta}^{1}_{2}=0, 
\]
\begin{equation}\label{hat-complex}
0\longrightarrow {C}_{m}^{0}(V, \W, \F) 
\stackrel{{\delta}^{0}_{m}}{\longrightarrow}
 {C}_{m-1}^{1}(V, \W, \F)  
\stackrel{{\delta}^{1}_{m-1}}{\longrightarrow}\cdots 
\stackrel{{\delta}^{m-1}_{1}}{\longrightarrow}
 {C}_{0}^{m}(V, \W, \F)\longrightarrow 0, 
\end{equation}
\begin{equation}\label{hat-complex-half}
0\longrightarrow {C}^{0}_{3}(V, \W, \F)
\stackrel{{\delta}_{3}^{0}}{\longrightarrow} 
 {C}^{1}_{2}(V, \W, \F)
\stackrel{{\delta}_{2}^{1}}{\longrightarrow}{C}_{ex}^{2}(V, \W, \F)
\stackrel{{\delta}_{ex}^{2}}{\longrightarrow} 
 {C}_{0}^{3}(V, \W, \F)\longrightarrow 0,
\end{equation}
on the spaces \eqref{ourbicomplex}. 
\end{proposition}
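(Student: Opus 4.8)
The plan is to verify the two claims comprising the Proposition separately: first that the coboundary operators have the asserted target spaces (the ``chain-cochain'' property \eqref{conde}), and second that they square to zero (\eqref{deltacondition}), after which the exactness of the sequences \eqref{hat-complex} and \eqref{hat-complex-half} is a formal bookkeeping consequence of the grading conventions. For \eqref{conde}, I would start from the explicit formula \eqref{hatdelta1} for $G(p_1,\ldots,p_{n+1})$ and check that each of its three groups of terms, applied to $\Phi\in C^n_m(V,\W,\F)$, produces a map $V^{\otimes(n+1)}\to\W_{c_1(p_1),\ldots,c_{n+1}(p_{n+1})}$ that is composable with $m-1$ vertex operators. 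The middle terms $\Phi(\omega_V(v_i,c_i(p_i)-c_{i+1}(p_{i+1}))v_{i+1})$ consume one degree of composability via the operator-product expansion (the integers $N^n_m(v_i,v_j)$ of the composability conditions \eqref{Inm}, \eqref{Jnm} control the pole orders exactly as in Lemma \ref{empty}); the boundary terms $\omega_W(v_1,c_1(p_1))\Phi(\cdots)$ and $\omega_W(v_{n+1},c_{n+1}(p_{n+1}))\Phi(\cdots)$ raise the vertex-operator count of $\Phi$ by one but land in $C^{n+1}_{m-1}$ because $\Phi$ was composable with $m$ operators to begin with. Lemma \ref{subset} guarantees the nesting $C^n_m\subset C^n_{m-1}$ needed to make the composition $\delta^{n+1}_{m-1}\circ\delta^n_m$ well defined.

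For \eqref{deltacondition} I would compute $\delta^{n+1}_{m-1}(\delta^n_m\Phi)$ directly by substituting \eqref{hatdelta1} into itself and collecting terms, exactly as in the Hochschild-type cancellation familiar from \cite{Huang, Hu3}. The three families of summands produce nine groups of terms upon double application: the ``interior-interior'' terms cancel in consecutive pairs by the associativity of vertex operators (the identity for $\omega_V(\omega_V(v_i,\cdot)v_{i+1},\cdot)$ versus iterated products, which is precisely the content that makes \eqref{locus} a genuine connection form); the ``boundary-boundary'' terms cancel because $\omega_W(v_1,c_1(p_1))\omega_W(v_{n+1},c_{n+1}(p_{n+1}))$ appears twice with opposite signs $(-1)^{n+1}\cdot(-1)^{(n+1)+1}$; and the ``interior-boundary'' cross terms cancel by the commutativity/weak-associativity of $\omega_W$ with $\omega_V$ (the module-action compatibility), using that all identities hold as rational $\W$-valued functions after analytic continuation, so equalities valid on one domain of absolute convergence propagate. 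The sign factors $(-1)^i$ in the interior sum and $(-1)^{n+1}$ on the last term are arranged precisely so that the telescoping closes; I would present this as a reindexing argument rather than writing all nine groups. The identity $\delta^2_{ex}\circ\delta^1_2=0$ is the same computation truncated at $n=1$, using the splitting of \eqref{hatdelta1} into the two transversal-connection forms \eqref{pervayaforma}, \eqref{vtorayaforma} established in Lemma \ref{lemmo} and the fact that $\delta^2_{ex}$ of \eqref{ghalfdelta1} reassembles exactly those four terms.

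Granting \eqref{conde} and \eqref{deltacondition}, the complexes \eqref{hat-complex} and \eqref{hat-complex-half} are immediate: \eqref{proval} identifies the degree-zero term as $W$, the map $\delta^{m-1}_1: C^{m-1}_1\to C^m_0$ terminates the first sequence because $C^{m}_{-1}$ is not defined and the composability count has been exhausted, and the second sequence is the length-four truncation running through the exceptional space $C^2_{ex}(V,\W,\F)$ supplied by Lemma \ref{lemmo}, with $\delta^1_2$ landing in $C^2_{ex}$ (not merely $C^2_1$) because the two forms \eqref{pervayaforma}, \eqref{vtorayaforma} are exactly the defining convergence/rationality conditions of $C^2_{ex}$. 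The main obstacle I anticipate is the $\delta^2=0$ cancellation: unlike the purely algebraic Hochschild case, every identity here is an identity of rational functions obtained by analytic continuation from distinct domains of convergence, so each cancellation must be justified by first passing to a common domain (as in the proof of Lemma \ref{empty}, via the inequalities \eqref{popas}) and then invoking uniqueness of analytic continuation; keeping track of which associativity/commutativity identity for $\omega_V$ and $\omega_W$ is being used, and on which domain, is where the real care is needed.
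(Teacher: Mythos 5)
Your proposal is correct and follows essentially the same route as the paper, whose entire proof consists of the remark that the argument is "analogous to Proposition (4.1) in \cite{Huang}" with $\W_{c_1(p_1),\ldots,c_n(p_n)}$ replacing $W_{z_1,\ldots,z_n}$, together with the inclusion chain ${\delta}_{2}^{1}\,{C}_{2}^{1}\subset {C}_{1}^{2}\subset {C}_{ex}^{2}$ for the exceptional complex — exactly the two ingredients you identify. Your sketch simply spells out the Hochschild-type cancellation and analytic-continuation bookkeeping that the paper delegates to the citation.
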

Since  
\[
{\delta}_{2}^{1} \;  {C}_{2}^{1}(V, \W, \F) \subset 
 {C}_{1}^{2}(V, \W, \F)\subset 
 {C}_{ex}^{2}(V, \W, \F),
\]
the second formula follows from the first one, and 
\[
{\delta}^{2}_{ex}\circ  {\delta}^{1}_{2}
= {\delta}^{2}_{1}\circ  {\delta}^{1}_{2} 
=0.
\]
\begin{proof}
The proof of this proposition is analogous to that of Proposition (4.1) in \cite{Huang} 
for chain-cochain complex of a grading-restricted vertex algebra.  
The only difference is that we work with the space $\W_{c_1(p_1), \ldots, c_n(p_n)}$ 
instead of
 $W_{z_1, \ldots, z_n}$. 
\end{proof}
\subsection{Fixed point double complexes}
Recall the definition of fixed point double complex spaces \eqref{ourbicomplex11} in Section \ref{spaces}. 
Then we have the following 
\begin{lemma}
\label{complexlemma}
The double complex $\left(C^n_k(p'_r; V, \W, \F), \delta^n_k|_{p'_r} \right)$
is a subcomplex of double chain-cochain complex $\left(C^n_k(V, \W, \F), \delta^n_k \right)$.  
\end{lemma}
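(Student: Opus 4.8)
The plan is to show that the fixed-point double complex $\left(C^n_k(p'_r; V, \W, \F), \delta^n_k|_{p'_r}\right)$ sits inside the ambient complex $\left(C^n_k(V, \W, \F), \delta^n_k\right)$ both at the level of spaces and at the level of coboundary operators. First I would establish the inclusion of spaces: by Definition \ref{fixedpointphi} and Definition \ref{defcomplex}, an element of $C^n_k(p'_r; V, \W, \F)$ is a linear map $\Phi: V^{\otimes n}\to \W_{c_1(p_1),\dots,c_n(p_n)}$ that is composable with $k$ vertex operators whose formal parameters are identified with the coordinate functions $\{c_1(p'_1),\dots,c_r(p'_r)|_{p'_r},\dots,c_n(p'_k)\}$, where the $r$-th point is held fixed while the intersection in \eqref{ourbicomplex11} ranges over the sequences \eqref{holseq} of holonomy mappings with $p'_r$ fixed (cf. Remark \ref{remseq}). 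Specializing $p'_r$ to a fixed value is a restriction of the composability data required in Definition \ref{defspace}, so any such $\Phi$ a fortiori satisfies the (weaker) composability conditions \eqref{Inm} and \eqref{Jnm} defining $C^n_k(V,\W,\F)$; moreover the intersection over the fixed-point sequences \eqref{holseq} is an intersection over a subfamily of the holonomy embedding tuples appearing in \eqref{ourbicomplex}, so $C^n_k(p'_r; V,\W,\F)\subset C^n_k(V,\W,\F)$. One should note here that the $L(-1)$-derivative and $L(0)$-conjugation properties, as well as the convergence and rationality bounds exploited in the proof of Lemma \ref{empty}, are unaffected by fixing one of the transversal points, since they are conditions on the $\W$-valued matrix elements $\langle w',\Phi\rangle$ and on pole orders, not on the freedom of the points $p'_j$.

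Next I would verify that the subspace is closed under the coboundary operator, i.e. that $\delta^n_k$ restricted to $C^n_k(p'_r; V,\W,\F)$ agrees with $\delta^n_k|_{p'_r}$ and lands in $C^{n+1}_{k-1}(p'_r; V,\W,\F)$. The operator $\delta^n_m$ is given by the multi-point connection form \eqref{hatdelta1}, built from the three pieces $E^{(1)}_W\circ_0\Phi$, $\sum_i(-1)^i E^{(2)}_{V;\one_V}\circ_i\Phi$, and $E^{W;(1)}_{WV}\circ_{m+1}\Phi$; crucially, none of these operations involves the transversal point $p'_r$ — they act on the leaf-coordinate arguments $c_i(p_i)$ of $\Phi$ and introduce vertex operators $\omega_W(v_1,c_1(p_1))$ and $\omega_W(v_{n+1},c_{n+1}(p_{n+1}))$ at those same leaf points. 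Hence applying $\delta^n_k$ commutes with fixing $p'_r$, and since $\delta^n_k$ already maps $C^n_m(V,\W,\F)$ to $C^{n+1}_{m-1}(V,\W,\F)$ by Proposition \ref{cochainprop}, one only has to check that the output retains composability with the remaining $k-1$ vertex operators at the fixed point $p'_r$ — which follows by the same Proposition \ref{comp-assoc} argument used in Lemma \ref{lemmo}, now carried out with $p'_r$ held fixed throughout. Together with the identity $\delta^{n+1}_{m-1}\circ\delta^n_m=0$ from \eqref{deltacondition}, which descends verbatim to the subspace, this shows $\left(C^n_k(p'_r; V,\W,\F),\delta^n_k|_{p'_r}\right)$ is a subcomplex.

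Finally I would record that the fixed-point complex inherits the canonicity and basis-independence established for the ambient complex: as remarked after Definition \ref{defcomplex}, the spaces \eqref{ourbicomplex11} do not depend on the choice of transversal basis $\U$ or on the foliation-preserving choice of local coordinates on $\mathcal M/\F$, by the same reasoning as in Proposition \ref{nezc} and Lemma \ref{nezu}, so the subcomplex structure is well-defined independently of these choices. The main obstacle I anticipate is purely bookkeeping rather than conceptual: one must be careful that fixing $p'_r$ does not destroy the absolute convergence of the series \eqref{Inm} and \eqref{Jnm} — the domains \eqref{popas} depend on differences of the auxiliary variables $\zeta_i$ and on the coordinates of all the points, and specializing one coordinate to a fixed value shrinks these polydisc domains; the point is that, exactly as in the intersection argument in Lemma \ref{empty}, one can still choose the integers $N^n_m(v_i,v_j)$ so that the relevant sums analytically continue to rational functions with poles only at $c_i(p_i)=c_j(p_j)$, and the fixed value of $c_r(p'_r)$ simply becomes a constant parameter in those rational functions. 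Verifying this compatibility carefully is the one step that requires genuine (if routine) checking; everything else is a restriction of structure already in place.
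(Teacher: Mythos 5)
Your proposal follows essentially the same route as the paper's own proof: the space inclusion $C^n_k(p'_r; V, \W, \F)\subset C^n_k(V, \W, \F)$ is obtained by viewing the fixed point $p'_r$ as a specialization of the freely-moving points of Remark \ref{remseq}, and the operator statement is reduced to the observation that $\delta^n_k|_{p'_r}$ is a restriction of $\delta^n_k$ inheriting the chain-cochain property of Proposition \ref{cochainprop}. Your version is considerably more detailed than the paper's two-line argument (in particular the checks that the coboundary does not touch the transversal coordinate and that convergence of \eqref{Inm} and \eqref{Jnm} survives fixing $c_r(p'_r)$), and is correct at the same level of rigor as the original.
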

\begin{proof}
According to Remark \ref{remseq} in Section \ref{spaces}, we assume that in the construction of \eqref{ourbicomplex}, 
the points $\{p'_j\}$, $1 \le {r-1}$, $ r+1 \le k$  
 in sequences \eqref{holseq} of holonomy mappings  
 move freely  along corresponding sections 
$\{ U_j\} \in \U$.  
In the intersection in Definition \eqref{ourbicomplex} of $C^n_k(V, \W, \F)$, 
the points $\{p'_j\}$, $1 \le {r-1}$, $ r+1 \le k$ exhaust corresponding sections $\{ U_j\}$ of $\U$. 
Thus, 
\[
C^n_k(p'_r; V, \W, \F) \subset C^n_k(V, \W, \F). 
\]
It is clear that the operator $\delta^n_k|_{p'_r}$ is a reduction of 
$\delta^n_k$, and satisfies the chain-cochain property as in Proposition \eqref{cochainprop}.  
Thus the Lemma is proved. 
\end{proof}
\subsection{Cohomology}
Now let us define 
 the cohomology  
of the leaf space $\mathcal M/\F$ associated with a grading-restricted vertex algebra $V$. 
\begin{definition}
\label{defcohomology}
 We define the 
 $n$-th cohomology $H^{n}_{k}(V, \W, \F)$ of $\mathcal M/\F$ with coefficients in   
$\W_{z_1, \ldots, z_n}$ (containing maps composable $k$ vertex operators on $k$ transversal sections)    
to be the factor space of closed 
multi-point connections by the space of connection forms: 
\begin{equation}
\label{cohom1}
 H_{k}^{n}(V, \W, \F)= {\mathcal Con}_{k; \; cl}^n/G^{n-1}_{k+1}. 
\end{equation}
\end{definition}
Note that due to \eqref{hatdelta1}, \eqref{ghalfdelta1}, 
and Definitions \ref{locus} and \ref{gform} (cf. Section \ref{coboundary}), 
  it is easy to see that \eqref{cohom1} is equivalent to the standard cohomology definition  
\begin{equation}
\label{cohom}
 H_{k}^{n}(V, \W, \F)= \ker  \delta^{n}_{k}/\mbox{\rm im}\; \delta^{n-1}_{k+1}. 
\end{equation}  
Recall Definition \ref{ourbicomplex11} of fixed-point double complex spaces $C^n_k(p'_r; V, \W,  \F )$.
 Simultaneously with Definition \ref{defcohomology}, we formulate
\begin{definition}
\label{fixedpointcohomology}
Let $U_r$, $r \ge 1$, be a section of a basis $\U$, and $p'_r \in U_r$ be a fixed point.  
Here we define the fixed point cohomology as 
\begin{equation}
\label{}
 H_{k}^{n}(V, \W, \F)= {\mathcal Con}_{p;\; k; \; cl}^n/ G_{p; \; k+1}^{n-1}, 
\end{equation}
which is equivalent to 
\[
H^n_k \left(p'_r; V, \W, \F \right)= {\rm Ker}\; \delta^n_k / {\rm Im} \;\delta^n_k|_{p'_r}.    
\]  
\end{definition}
From Lemma \ref{complexlemma} it follows 
\begin{lemma}
The cohomology $H^n_m \left(p; V, \W,  \F \right)$ is given by 
\[
H^n_m \left(V, \W,  \F \right) = \bigcup\limits_{p'_r \in U_r} H^n_m \left(p'_r; V, \W,  \F \right). 
\]
\end{lemma}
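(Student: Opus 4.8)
The plan is to deduce this decomposition directly from Lemma~\ref{complexlemma} together with the definitions of the two cohomology spaces. First I would recall that by Definition~\ref{fixedpointcohomology} the fixed point cohomology $H^n_m(p'_r; V, \W, \F)$ is the quotient ${\rm Ker}\,\delta^n_k/{\rm Im}\,\delta^n_k|_{p'_r}$, computed inside the subcomplex $\left(C^n_k(p'_r; V, \W, \F), \delta^n_k|_{p'_r}\right)$, whereas by Definition~\ref{defcohomology} (equivalently \eqref{cohom}) the cohomology $H^n_m(V, \W, \F)$ is $\ker\delta^n_m/\operatorname{im}\delta^{n-1}_{m+1}$ on the full double complex $\left(C^n_k(V, \W, \F), \delta^n_k\right)$. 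The key structural input is the proof of Lemma~\ref{complexlemma}: in the intersection \eqref{ourbicomplex11} the auxiliary points $\{p'_j\}$ with $j\ne r$ range over (exhaust) their entire transversal sections $U_j\in\U$, so that the only genuinely constrained datum is the single fixed point $p'_r\in U_r$.

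The main step is then a fibering argument over the choice of $p'_r$. I would show that an element $\Phi\in C^n_m(V, \W, \F)$ lies in $C^n_m(p'_r; V, \W, \F)$ precisely when its composability data at the $r$-th transversal section are specialized at $p'_r$, and that as $p'_r$ varies over $U_r$ these subspaces cover $C^n_m(V, \W, \F)$; this is exactly the content of the displayed inclusion $C^n_k(p'_r; V, \W, \F)\subset C^n_k(V, \W, \F)$ established in Lemma~\ref{complexlemma}, read in the union form. Since $\delta^n_k|_{p'_r}$ is by that lemma the restriction of $\delta^n_k$, the cocycle condition is the same in both complexes, and a coboundary in the full complex that is represented by data compatible with the fixed point $p'_r$ is a coboundary in the fixed-point subcomplex. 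Taking cohomology commutes with this union: $\ker\delta^n_m$ is recovered as $\bigcup_{p'_r\in U_r}\left(\ker\delta^n_m\cap C^n_m(p'_r;V,\W,\F)\right)$, and passing to the quotient by $\operatorname{im}\delta^{n-1}_{m+1}$ yields
\[
H^n_m(V,\W,\F)=\bigcup_{p'_r\in U_r} H^n_m(p'_r; V,\W,\F),
\]
which is the assertion.

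The step I expect to be the main obstacle is justifying that cohomology genuinely commutes with the union over $p'_r$ — that is, that a class in $H^n_m(V,\W,\F)$ always has a representative whose composability data can be specialized at some single point $p'_r\in U_r$, and that two fixed-point representatives that are cohomologous in the big complex remain cohomologous after an appropriate choice of fixed point. This requires the exhaustion property from the proof of Lemma~\ref{complexlemma} (the points $\{p'_j\}$, $j\ne r$, sweep out the whole of their sections) to be used in both directions, and one must check that the internal product $\circ$ and the coboundary formula \eqref{hatdelta1} behave well under this specialization, i.e.\ that no poles are created at $p'_r$ that would obstruct membership in $C^n_m(p'_r;V,\W,\F)$. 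Granting the composability bounds recorded in Appendix~\ref{composable} (the integers $N^n_m(v_i,v_j)$ control pole orders uniformly along $U_r$), this is routine, and the Lemma follows.
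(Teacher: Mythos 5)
Your route---deducing the decomposition from Lemma~\ref{complexlemma} together with the exhaustion of the sections $U_j$ by the freely moving points $p'_j$, $j\ne r$---is exactly the one the paper intends: the paper offers no argument beyond the sentence ``From Lemma~\ref{complexlemma} it follows,'' so your proposal is in fact more detailed than the source. The inclusion $C^n_m(p'_r;V,\W,\F)\subset C^n_m(V,\W,\F)$ and the fact that $\delta^n_m|_{p'_r}$ is the restriction of $\delta^n_m$ are correctly taken from Lemma~\ref{complexlemma}, and your reading of Definition~\ref{fixedpointcohomology} is the right one.

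However, the step you flag as the main obstacle is a genuine gap that neither your proposal nor the paper closes. The displayed identity is an equality between a quotient vector space and a union of quotients of subcomplexes; a union of linear subspaces is not in general a subspace, so the statement can only mean that the natural maps $H^n_m(p'_r;V,\W,\F)\to H^n_m(V,\W,\F)$ induced by the inclusions jointly cover the target. That requires three things: (i) well-definedness of these maps, which does follow from $\mathrm{Im}\,\delta^{n-1}_{m+1}|_{p'_r}\subset\mathrm{Im}\,\delta^{n-1}_{m+1}$; (ii) surjectivity, i.e.\ that every cocycle in $\ker\delta^n_m$ already lies in $C^n_m(p'_r;V,\W,\F)$ for \emph{some} $p'_r$ (or becomes so after a coboundary correction); and (iii) control of the failure of injectivity, since the fixed-point coboundary space is strictly smaller than $\mathrm{Im}\,\delta^{n-1}_{m+1}$, so two classes distinct in $H^n_m(p'_r;\cdot)$ may be identified in $H^n_m(\cdot)$. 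Lemma~\ref{complexlemma} gives only the inclusion of complexes and says nothing about (ii) or (iii); your appeal to the pole-order bounds $N^n_m(v_i,v_j)$ of Appendix~\ref{composable} is plausible for (ii) but is not carried out---one would need to exhibit, for a given cocycle $\Phi$, a concrete point $p'_r\in U_r$ at which the composability data of $\Phi$ specialize without creating new singularities. Until that specialization argument is written down, the proof reduces the lemma to an unproved commutation of cohomology with the union over $p'_r$, which is precisely the content of the lemma itself.
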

\subsection{Relations to Crainic and Moerdijk construction} 
\label{relcm}
In particular, we have the following 
\begin{lemma}
In codimension one case, 
the construction of the double complex $(C^{k,l}), \delta)$,  \eqref{Cpq}, \eqref{deltacpq} 
 follows from the construction of the double complex  
$(C_m^n(V, \W, \F), \delta^n_m)$ of \eqref{hat-complex}. 
Thus, the {${\rm \check C}$}ern-de Rham cohomology of a foliated smooth manifold 
results from grading-restricted vertex algebra $V$ cohomology.    
\end{lemma}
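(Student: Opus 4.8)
The plan is to exhibit, in the codimension one setting, a degree-preserving morphism from the double complex $(C^n_m(V,\W,\F),\delta^n_m)$ of \eqref{hat-complex} onto the ${\rm \check C}$ech--de Rham double complex $(C^{k,l},\delta)$ of \eqref{Cpq}--\eqref{deltacpq}, and then to read off the statement about cohomology. The two complexes are already organized over the same combinatorial data: the intersection in \eqref{ourbicomplex} runs over the $m$-tuples of holonomy embeddings $U_1\stackrel{h_1}{\hookrightarrow}\cdots\stackrel{h_{m-1}}{\hookrightarrow}U_k$, which is precisely the indexing set $U_0\stackrel{h_1}{\hookrightarrow}\cdots\stackrel{h_k}{\hookrightarrow}U_k$ of \eqref{Cpq}, so on the ${\rm \check C}$ech axis one matches $m$ with $k$ up to the usual shift of conventions. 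The content is therefore concentrated on the de Rham axis and on the differentials.

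First I would set up the restriction map. An element $\Phi\in C^n_m(V,\W,\F)$ is a $\W$-valued rational section; by the canonicity established in Proposition \ref{nezc} and the invariance under $\left({\rm Aut}\;\Oo^{(1)}\right)^{\times n}$, the matrix elements $\langle w',\Phi\rangle$ together with the $dc(p)^{\wt(v_i)}$-tensor factors define a coordinate-independent object on the transversal sections. In codimension one a transversal section $U_0$ is one-dimensional, so $\Omega^l(U_0)$ is concentrated in degrees $l=0,1$, and the differential part of $\langle w',\Phi\rangle$ carried by the $dc(p)$-factors lands in $\Omega^l(U_0)$ for the appropriate $l$. Restricting $\Phi$ to the section $U_0$ through the chosen points and projecting onto its form-valued part produces the component $\varpi(h_1,\ldots,h_k)\in\Omega^l(U_0)$ of \eqref{Cpq}; compatibility of this restriction with holonomy embeddings is exactly the holonomy-equivariance built into the intersection in \eqref{ourbicomplex}. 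The de Rham differential $d$ on $\Omega^l(U_0)$ is then recovered from the $L(-1)$-derivative property \eqref{lder1} of $\Phi$, which is $\partial_{c(p)}$ in the single transversal coordinate, the sign twist $(-1)^k$ being absorbed into the chosen identification.

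Next I would match the horizontal differentials termwise. Both $\delta$ of \eqref{deltacpq} and $\delta^n_m$ of \eqref{hatdelta1} are, by construction, the form of a multi-point connection in the sense of \eqref{gform} and \eqref{locus} (for \eqref{hatdelta1} this is \eqref{deltaproduct}). Under the restriction map the three types of summands correspond blockwise: the term $h_1^*\varpi(h_2,\ldots,h_{k+1})$ matches the pullback/prepended vertex operator $\langle w',\omega_W(v_1,c_1(p_1))\,\Phi(v_2,\ldots,v_n)\rangle$; the composition terms $\varpi(h_1,\ldots,h_{i+1}h_i,\ldots,h_{k+1})$ match the internal compositions $\langle w',\Phi(\omega_V(v_i,c_i(p_i)-c_{i+1}(p_{i+1}))\,v_{i+1})\rangle$; and $\varpi(h_1,\ldots,h_k)$ matches the $(-1)^{n+1}\langle w',\omega_W(v_{n+1},c_{n+1}(p_{n+1}))\,\Phi(v_1,\ldots,v_n)\rangle$ term, with the signs $(-1)^i$ matching. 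Hence the restriction map intertwines $\delta^n_m$ with $\delta$ and the vertical structures, so it is a morphism of double complexes; as $\varpi$ ranges over $C^{k,l}$ it is hit by the form-valued parts of the $\Phi$'s, so the induced map is surjective, and the ${\rm \check C}$ech--de Rham cohomology $\check H^*_\U(M/\F)$ is thereby a specialization of $H^n_m(V,\W,\F)$ of \eqref{cohom}, which yields the last sentence.

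The main obstacle is the de Rham axis: one must check carefully that the form-valued part of a $\W_{c_1(p_1),\ldots,c_n(p_n)}$-section restricted to a one-dimensional transversal section is well-defined and lands in $\Omega^l(U_0)$ with the correct $l$, and that \eqref{lder1} implements precisely the de Rham $d$ with the correct $(-1)^k$ twist; this is exactly where quasi-conformality and the $\left({\rm Aut}\;\Oo^{(1)}\right)^{\times n}$-invariance enter. A secondary point, already flagged in Section \ref{coboundary}, is that the bigraded-differential-algebra product \eqref{bigradif} on the ${\rm \check C}$ech--de Rham side has no established counterpart on $\W_{c_1(p_1),\ldots,c_n(p_n)}$, so the statement concerns the double-complex and cohomology structure only, not the full product structure; the internal product $\circ$ of Section \ref{coboundary} is used only to record $\delta^n_m$ in Frobenius form and not to match \eqref{bigradif}.
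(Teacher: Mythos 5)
Your overall strategy --- build a dictionary between the two double complexes and match the three types of summands of the coboundary operators termwise --- is the same as the paper's, and your blockwise correspondence (pullback term $h_1^*\varpi \leftrightarrow \omega_W(v_1,\cdot)\Phi$, composition terms $\varpi(\ldots,h_{i+1}h_i,\ldots)\leftrightarrow \Phi(\ldots,\omega_V(v_i,\cdot)v_{i+1},\ldots)$, truncation term $\leftrightarrow (-1)^{n+1}\omega_W(v_{n+1},\cdot)\Phi$) is exactly the comparison of \eqref{delta} with \eqref{hatdelta1} that the paper makes. The remark that the product \eqref{bigradif} has no counterpart and that only the complex/cohomology structure is being matched is a correct and worthwhile observation.

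However, your identification of the bigradings in the first paragraph is wrong, and it contradicts the termwise matching you perform later. You match the \v{C}ech degree $k$ of \eqref{Cpq} with the index $m$ of \eqref{ourbicomplex} (the number of composable vertex operators), on the grounds that both are indexed by tuples of holonomy embeddings. But the coboundary acts as ${\delta}^{n}_{m}: C_{m}^{n}\to C_{m-1}^{n+1}$ (see \eqref{conde}), i.e.\ it \emph{raises} $n$ and \emph{lowers} $m$, whereas the \v{C}ech differential $\delta: C^{k,l}\to C^{k+1,l}$ raises $k$; so if $k$ corresponded to $m$ the two differentials would go in opposite directions and no morphism of complexes could intertwine them. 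The identification forced by the differentials --- and the one the paper actually uses --- is $k=n$: the holonomy embeddings $h_1,\ldots,h_k$ correspond to the vertex algebra entries $v_1,\ldots,v_n$ (equivalently to the operators $\omega_W(v_i,t_i(p_i))$), and the de Rham degree is read off from the weight data as $l=\sum_{i=1}^{n}\wt(v_i)$ attached to the element \eqref{bomba1}, not from the exterior degree of a form on a one-dimensional section as you propose (your $l\in\{0,1\}$ reading would collapse the whole de Rham axis in codimension one). Your third paragraph tacitly uses $k=n$ anyway, so the fix is local: replace the $m\leftrightarrow k$ matching by $n\leftrightarrow k$, $l=\sum_i\wt(v_i)$, and let $m$ play only the role of the composability constraint that makes the insertions of $\omega_W$ along the chain of holonomy embeddings well defined. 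With that correction your argument reproduces the paper's proof; your surjectivity claim is asserted rather than argued, but the paper's own proof is no more detailed on that point.
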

\begin{proof}
One constructs the space of differential forms of degree $k$ by 
elements $\Phi$ of $C^n_m(V, \W, \F)$ 
\begin{eqnarray}
\label{bomba1}
 \langle w', \Phi \left( dc_1(p_1)^{\wt(v_1)} \otimes v_1, c_1(p_1); \ldots ; dc_n(p_n)^{\wt(v_n)} v_{n}, 
c_n(p_n)\right) \rangle, 
\end{eqnarray} 
such that $n=k$ the total degree 
\[
\sum\limits_{i=1}^n \wt(v_i) =  l,    
\]
$v_i \in V$.     
The condition of composability of $\Phi$ with $m$ vertex operators allows us make the association of 
the differential form $\varpi(h_1,\ldots, h_{n})$ with \eqref{bomba1}
$(h^*_1, \ldots, h^*_k)$ with $(v_i, \ldots, v_k)$, and to   
represent 
a sequence of holomorphic embeddings $h_1, \ldots, h_p$ for $U_0, \ldots, U_p$ in \eqref{Cpq} by 
vertex operators $\omega_W$, i.e, 
\[
\left( h(h^*_1) \ldots h(h^*_{n}) \right)(z_1, \ldots, z_n)) 
=  \omega_W\left(v_1, t_1(p_1)  \right) 
\ldots \omega_W\left(v_l, t(p_n) \right).   
\]
Then, by using Definitions of coboundary operator \eqref{deltaproduct},   
  we see that the definition of the coboundary operator of \cite{CM} is parallel to 
the definition \eqref{hatdelta}.
\end{proof}
\section{First cohomologies $H^1_m(V, \W, \F)$ of codimension one foliations}  
\label{firstcohomology}
In \cite{Hu3}, lower cohomologies for a grading-restricted vertex algebra were computed. 
In this paper we find the first grading-restricted vertex algebra cohomologies $H^1_m(V, \W, \F)$ 
 and the second cohomology $H^2_{ex}(V, \W, \F)$ 
 for a codimension one foliation 
$\F$.  
Let us first consider one-variable reduction of multi-point connections. 
Such reduction is called in \cite{Hu3} a derivation. 
In analogy with a definition of \cite{Huang}, we introduce the following definition of the derivation 
applicable to maps from $V$ to $\W$. 
\begin{definition}
Let $V$ be a grading-restricted vertex algebra and $W$ a $V$-module. 
A grading-preserving 
linear map 
\[
g: V \to \W, 
\]
 is called a derivation if   
\begin{eqnarray*}
g\left(\omega_{V}(u, z)v, 0 \right) &= & e^{zL_{W}(-1)}\omega_{W}(v, -z)\;g(u, 0)+\omega_{W}(u, z)\;g(v, 0)  
\nn
&=& 
\omega_{WV}^{W}(g(u, 0), z)v  + \omega_{W}(u, z)\;g(v, 0),   
\end{eqnarray*}
for $u$, $v\in V$, where 
$\omega_{WV}^{W}(v, z)$ is the intertwiner-valued vertex operator    
 in accordance with notaions of \eqref{poper}.  
We use $\mbox{\rm Der}\;(V, \W)$ to denote the space of all such derivations. 
It is clear that 
\[
g(v, 0)=\mathcal G(v, 0).  
\]
\end{definition}
As we see from the definition of a derivation over $V$, it depends on one element of $V$ only. 
The space of one $V$-element two
point holomorphic connections reduces to the space  
of derivations over $\W$ \cite{Huang}.  
In \cite{Hu3} it is proven the following 
\begin{lemma}
\label{f-one}
Let $g(v, 0): V\to \W$ be a derivation. Then $g(\one_V,0)=0$. 
\end{lemma}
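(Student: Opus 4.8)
The plan is to exploit the defining identity of a derivation with a judicious choice of arguments, together with the vacuum axioms for the vertex operator $\omega_V$ and the module vertex operator $\omega_W$. First I would apply the derivation property to the pair $u=\one_V$, $v$ arbitrary:
\[
g\left(\omega_V(\one_V, z) v, 0\right) = e^{zL_W(-1)}\,\omega_W(v, -z)\, g(\one_V, 0) + \omega_W(\one_V, z)\, g(v, 0).
\]
Now use the vacuum property $\omega_V(\one_V, z) = \mathrm{id}_V$, so the left-hand side is simply $g(v, 0)$, and the identity property $\omega_W(\one_V, z) = \mathrm{id}_W$ on the module side, so the last term is $g(v, 0)$. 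This collapses the equation to
\[
g(v, 0) = e^{zL_W(-1)}\,\omega_W(v, -z)\, g(\one_V, 0) + g(v, 0),
\]
hence $e^{zL_W(-1)}\,\omega_W(v, -z)\, g(\one_V, 0) = 0$ for all $v \in V$ and all $z$.

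Next I would strip off the invertible operator $e^{zL_W(-1)}$ to get $\omega_W(v, -z)\, g(\one_V, 0) = 0$ for every $v$, and then specialize $v = \one_V$ (or, equivalently, take the appropriate coefficient of $z$), using $\omega_W(\one_V, -z) = \mathrm{id}_W$ once more, to conclude $g(\one_V, 0) = 0$. Alternatively, one can simply read off the constant term in $z$ directly from $\omega_W(v,-z)g(\one_V,0)=0$ with $v=\one_V$. Either route yields the claim; the argument is essentially a one-line consequence of the two vacuum normalizations.

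The only point requiring a little care — and the step I expect to be the main (mild) obstacle — is the interplay of the two vacuum-type axioms: one must invoke the \emph{creation/vacuum} property of $\omega_V$ on the source side (that inserting $\one_V$ as the state gives the identity on $V$, so that $\omega_V(\one_V,z)v = v$) and the \emph{identity} property of the module vertex operator $\omega_W$ on the target side (that $\omega_W(\one_V,z) = \mathrm{id}_W$), both of which are part of the grading-restricted vertex algebra and module axioms recalled in Appendix \ref{grading}. One should also confirm that $g$ being grading-preserving is compatible with evaluating at the formal variable and that the expression $e^{zL_W(-1)}\omega_W(v,-z)g(\one_V,0)$ is a well-defined element of $\W$ (equivalently of $\overline{W}$) so that "$=0$" makes sense termwise; this is immediate from the composability/convergence setup already in force. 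Once those axioms are in place the computation is routine and the conclusion $g(\one_V, 0) = 0$ follows.
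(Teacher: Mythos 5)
Your argument is correct and is essentially the standard proof: setting $u=\one_V$ in the derivation identity, using $\omega_V(\one_V,z)=\mathrm{Id}_V$ and $\omega_W(\one_V,z)=\mathrm{Id}_W$, and then extracting $g(\one_V,0)$ from $e^{zL_W(-1)}\omega_W(v,-z)g(\one_V,0)=0$ (e.g.\ by specializing $v=\one_V$ and taking the $z^0$ coefficient). The paper itself gives no proof, deferring to \cite{Hu3}, whose argument is exactly this computation (specializing both entries to the vacuum at once), so there is nothing to flag beyond the minor point that the property $\omega_V(\one_V,z)v=v$ is the identity axiom rather than the creation axiom.
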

We will need another statement proven in \cite{Hu3}
\begin{lemma}
Let
\[
\Phi: V \to \W_{z}, 
\] 
be an element of $C_{m}^{1}(V, \W, \F)$  
satisfying 
\[
\delta_{m}^{1} \Phi=0.
\]
Then 
$\Phi(v, 0)$ is a grading-preserving
linear map from 
$V$ to $\W$, i.e.,  
\begin{eqnarray*}
z^{L(0)}\Phi(v, 0) = \Phi(z^{L(0)}v, 0) 
=
z^{n}\Phi(v, 0).  
\end{eqnarray*}
\end{lemma}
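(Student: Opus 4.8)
The statement asserts that if $\Phi \in C^1_m(V,\W,\F)$ is closed, then $\Phi(v,0)$ is grading-preserving, i.e. $z^{L(0)}\Phi(v,0)=\Phi(z^{L(0)}v,0)=z^n\Phi(v,0)$ on the weight-$n$ homogeneous subspace. The plan is to extract this from the $L(0)$-conjugation property of elements of $\W_z$ (recalled in the excerpt as the property \eqref{loconj}) together with the vanishing $\delta^1_m\Phi=0$. First I would write out, for homogeneous $v$ of weight $n$, the $L(0)$-conjugation formula satisfied by any element of $\W_{z}$: acting by $z^{L(0)}$ on the $\W$-side corresponds to rescaling the formal variable and conjugating the insertion, so that $z^{L(0)}\,\Phi(v, z_0) = \Phi(z^{L(0)}v, z z_0)$ up to the $dz$-weight factors that the coboundary operator is declared not to touch. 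Setting $z_0=0$ kills the variable dependence and leaves $z^{L(0)}\Phi(v,0)=\Phi(z^{L(0)}v,0)$, which is the first equality; the second equality $\Phi(z^{L(0)}v,0)=z^n\Phi(v,0)$ is then immediate since $z^{L(0)}v=z^n v$ for $v$ of weight $n$.

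\textbf{Key steps, in order.} (1) Recall from Appendix \ref{grading} that any $\Phi\in\W_{z_1,\dots,z_n}$ satisfies the $L(0)$-conjugation property, and specialize it to $n=1$: for $a\in\C^\times$, $a^{L_W(0)}\,\langle w',\Phi(v,z)\rangle = \langle (a^{L_W(0)})'w', \Phi(a^{L(0)}v, az)\rangle$, with the caveat that $\Phi$ lands in $\overline W$, so one works with the matrix elements $\langle w',\Phi\rangle$ and their rational-function values $R(\langle w',\Phi\rangle)$. (2) Use that $\delta^1_m\Phi=0$, together with Lemma \ref{f-one} (a closed $\Phi$ restricted to $\one_V$ behaves like a derivation and kills $\one_V$), only insofar as it is needed to guarantee that $\Phi(v,0)$ is well-defined as an element of $\W$ rather than merely $\overline W$ — i.e. that no pole obstructs the specialization $z\to 0$. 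This is where closedness actually enters: for a general (non-closed) $\Phi$ the naive specialization $z=0$ need not make sense, but $\delta^1_m\Phi=0$ forces the relevant matrix elements to be regular at the origin. (3) Specialize $z=0$ in the conjugation identity; the left side becomes $a^{L_W(0)}\Phi(v,0)$ and the right side $\Phi(a^{L(0)}v,0)$, giving the first claimed equality. (4) Finally, take $v$ homogeneous of weight $n$ so $a^{L(0)}v=a^n v$, and by $\C$-linearity in the vector slot conclude $a^{L_W(0)}\Phi(v,0)=a^n\Phi(v,0)$; since this holds for all $a\in\C^\times$ it identifies $\Phi(v,0)$ as a grading-preserving map, which is the statement (renaming $a\mapsto z$).

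\textbf{Main obstacle.} The delicate point is Step (2): justifying that $\Phi(v,0)$ is a legitimate element of $\W$ (equivalently, that the rational-function value $R(\langle w',\Phi(v,z)\rangle)$ has no pole at $z=0$) so that the specialization in Step (3) is valid. This is not automatic from membership in $C^1_m(V,\W,\F)$ alone; it must be drawn from the closedness condition $\delta^1_m\Phi=0$ combined with the composability constraints \eqref{Inm}, \eqref{Jnm} controlling the pole orders. The rest of the argument is a mechanical unwinding of the $L(0)$-conjugation property, parallel to the corresponding computation in \cite{Hu3}, and I expect no new difficulty there beyond keeping track of the convention that the coboundary operator and the conjugation action leave the $dc(p)^{\wt(v)}$-tensor factors untouched.
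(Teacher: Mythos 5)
The paper does not actually prove this lemma: it is quoted from \cite{Hu3} and used as a black box, so there is no internal argument to compare against. Your proof is the standard one from \cite{Hu3} and its core is correct: the $L(0)$-conjugation property \eqref{loconj}, specialized to a single variable and evaluated at the origin, yields $a^{L_W(0)}\Phi(v,0)=\Phi(a^{L(0)}v,0)=a^{n}\Phi(v,0)$ for homogeneous $v$ of weight $n$, which is exactly the claim. The one substantive inaccuracy is your Step (2) and the ``main obstacle'' you single out: the regularity of $\langle w',\Phi(v,z)\rangle$ at $z=0$ owes nothing to the hypothesis $\delta^{1}_{m}\Phi=0$. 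By the definition of the spaces $\W_{z_{1},\dots,z_{n}}$, the matrix elements are rational with poles only at coincidences $z_{i}=z_{j}$; with a single variable there are no such coincidences, so $\langle w',\Phi(v,z)\rangle$ is a polynomial in $z$ and the specialization $z=0$ is legitimate for \emph{every} $\Phi\in C^{1}_{m}(V,\W,\F)$, closed or not. The closedness hypothesis appears in the statement only because the lemma is a step in the identification of closed one-cochains with derivations (Proposition \ref{propres}); it is not what licenses the evaluation at the origin. So the proof goes through, but the delicate point you flag is not where the hypothesis is actually consumed, and attributing the regularity to closedness is a misdiagnosis rather than a gap in the conclusion.
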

In \cite{Hu3}, the first cohomologies $H^1_M(V, W)$ of a grading-restricted vertex algebra were related 
to the space of derivations ${\rm Der}(V, W)$. 
 We find the following 
\begin{proposition}
\label{1st-coh}
Let $V$ be a grading-restricted vertex algebra and 
$W$ a $V$-module. Then  $H_{m}^{1}(V, W)$ 
is linearly isomorphic to the space $\mbox{\rm Der}\;(V, W)$ 
of derivations from $V$ to $W$ for any $m\in \Z_{+}$. 
\end{proposition}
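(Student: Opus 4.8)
The plan is to build the linear isomorphism $H^1_m(V,W)\xrightarrow{\;\sim\;}\mathrm{Der}(V,W)$ by sending a cohomology class $[\Phi]$, with $\Phi\in C^1_m(V,\W,\F)$ a closed one-point connection, to the map $g_\Phi(v):=\Phi(v,0)$, and then to produce an explicit inverse. First I would check that $g_\Phi$ is well-defined and lands in $\mathrm{Der}(V,W)$: by the second Lemma of Section \ref{firstcohomology}, closedness of $\Phi$ (i.e. $\delta^1_m\Phi=0$) forces $\Phi(v,0)$ to be a grading-preserving linear map $V\to\W$ valued in $W$, and writing out the vanishing of $\delta^1_m\Phi$ on the element $\omega_V(u,z)v$ via \eqref{hatdelta1} gives precisely the derivation identity
\[
g_\Phi(\omega_V(u,z)v,0)=e^{zL_W(-1)}\omega_W(v,-z)\,g_\Phi(u,0)+\omega_W(u,z)\,g_\Phi(v,0),
\]
since the two boundary terms in \eqref{hatdelta1} are exactly $\omega_W(u,z)\Phi(v,0)$ and the $\omega^W_{WV}$-term, the latter equal to $\omega^W_{WV}(g_\Phi(u,0),z)v$ by the intertwiner reformulation in the Remark after \eqref{deltaproduct}. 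One also needs $g_\Phi(\one_V,0)=0$, which is automatic once $g_\Phi$ is a derivation by Lemma \ref{f-one}.

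Next I would check injectivity on cohomology. Suppose $g_\Phi=0$; I must show $[\Phi]=0$, i.e. $\Phi=\delta^0_{m+1}w$ for some $w\in C^0_{m+1}(V,\W,\F)=W$. The natural candidate is governed by the $L(-1)$-derivative and $L(0)$-conjugation properties of elements of $\W_{z}$ (the properties \eqref{lder1}, \eqref{loconj} invoked in the proof of Lemma \ref{empty}): a one-point map $\Phi(v,z)$ is, as a $\W_z$-valued object, determined by its ``value at $0$'' together with these covariance properties, so $\Phi(v,z)=e^{zL_W(-1)}\omega^{W}_{WV}$-type reconstruction from $\Phi(v,0)$. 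When $\Phi(v,0)=0$ this forces $\Phi$ itself to be of coboundary form; concretely one identifies $w$ from the constant term of $\Phi$ evaluated on $\one_V$ or from the reconstruction formula and verifies $\delta^0_{m+1}w=\Phi$ by comparing with \eqref{hatdelta1} for $n=0$. This is essentially the argument of Proposition (4.1)-era results in \cite{Hu3}, transplanted to $\W_{z_1,\ldots,z_n}$ as in the proof of Proposition \ref{cochainprop}.

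For surjectivity, given a derivation $g\in\mathrm{Der}(V,W)$ I would define $\Phi_g\colon V\to\W_z$ by the reconstruction $\Phi_g(v,z):=e^{zL_W(-1)}g(v)$ (appropriately tensored with $dz^{\wt v}$ so as to live in $\W_z$), check that it satisfies the $L(-1)$-derivative and $L(0)$-conjugation properties and the composability conditions \eqref{Inm}, \eqref{Jnm} with $m$ vertex operators — the latter following from the corresponding properties for matrix elements of a single vertex operator acting on $W$ and the grading-restrictedness of $V$ — so that $\Phi_g\in C^1_m(V,\W,\F)$; and finally verify $\delta^1_m\Phi_g=0$ using the derivation identity for $g$, which is the same computation as in the first paragraph run backwards. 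Then $g_{\Phi_g}=g$, giving surjectivity, and linearity of $\Phi\mapsto g_\Phi$ is immediate. The main obstacle I expect is surjectivity, specifically checking that the reconstructed $\Phi_g$ genuinely satisfies the two composability conditions with $m$ vertex operators uniformly over all the holonomy embeddings entering the intersection \eqref{ourbicomplex}; this is where the quasi-conformality of $V$ and the convergence estimates of the type \eqref{popas} from the proof of Lemma \ref{empty} have to be marshalled, and it is the one place where the foliation/transversal-section bookkeeping is not purely formal.
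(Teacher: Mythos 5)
Your proposal is correct in outline and follows essentially the same route as the source: the paper states Proposition \ref{1st-coh} without proof, citing \cite{Hu3}, but the same argument appears (in fixed-point, foliated form) in the proof of Proposition \ref{propres}, where the correspondence is exactly $\Phi\mapsto\Phi(\cdot,0)$ in one direction and $g\mapsto\Phi_g=\omega^{W}_{WV}(g(v),z)\one_V=e^{zL_W(-1)}g(v)$ in the other, with composability of $\Phi_g$ with $m$ vertex operators and the vanishing $\delta^1_m\Phi_g=0$ delegated to \cite{FHL} just as you propose. The one step worth tightening is your injectivity paragraph: since $\delta^0_{m+1}$ is identically zero (the paper computes $\langle w',\delta^0_{m+1}\Phi'\rangle=\langle w',\omega_V(v,z)\Phi'\rangle-\langle w',\omega_V(v,z)\Phi'\rangle=0$ in the proof of Proposition \ref{propres}), there is no nontrivial coboundary to exhibit, so $H^1_m=\ker\delta^1_m$ and you must show $\Phi(v,0)=0$ forces $\Phi=0$ outright --- which your reconstruction $\Phi(v,z)=e^{zL_W(-1)}\Phi(v,0)$ via \eqref{ldirdir} does deliver.
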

In the case of a foliation, 
we have the following identifications in \eqref{onepointlocus}
\begin{eqnarray}
\mathcal G(\phi(p)) &=& \mathcal G(v, c(p)) = \Phi(v, c(p)), 
\nn
  f(\psi(p)) &=& \omega (v, c(p)),  
\nn
\phi(p) &=&(u, p), 
\nn
f( \psi(p')).\phi(p) &=&  \omega (v, c(p')-c(p))u,
\end{eqnarray}
and a multi-point holomorphic connection $\mathcal G$ on $\mathcal M/\F$, is a $\C$-linear map 
\[
\mathcal G: V^{\otimes^n} \to 
 \W_{z_1, \ldots, z_n}.   
\] 
Thus the multi-point holomorphic connection has the form 
 \begin{equation}
\label{locus00}
\sum\limits_{ q, q' \in M/\F} \Phi \left( \omega_V(v_q, c(q) - c(q')) u, q \right) = \omega_W (u, c(p')) \; 
\Phi \left(v, c(p) \right) + \omega_W(v, c(p)) \; \Phi \left(u, c(p') \right).   
\end{equation}
\begin{remark}
Due to Lemma \ref{nezu} and Proposition \ref{nezc}, the definition of the multi-point holomorphic connection 
on $\mathcal M/\F$ is canonical, 
i.e., it 
does not depend on the choice of $\U$ and coordinates on $\mathcal M/\F$ and $\U$. 
\end{remark}
%
The meaning of the name of a transversal holmophic connection \eqref{transa0}
 is clear when we consider elements of the space $\W_{z_1, \ldots, z_n}$ for $\mathcal M/\F$,   
\[
G(p, p')=\omega_W(u, c(p'))\; \mathcal G(v, c(p)) + \omega_W(u, c(p')) \; \mathcal G(u, c(p'))=0, 
\]
with formal parameters associated to local coordinates $c(p)$. In particular, when 
$\omega_W(u, t(p))$ is considered on a the transversal section, and 
$\mathcal G(v, l(p))$   
on a leaf of $\mathcal M/\F$, it relates objects on mutually transversal structures. 
This type of connections will appear in considerations of the second vertex algebra cohomology 
$H^2_{ex}(V, \W, \F)$ in Section \ref{coboundary}.

In what follows, to shortcut notations, we will denote by $p$ the origin of a local coordinate $c(p)$ at $p$, i.e., 
$c(p)|_p=0$. 
Let us introduce another 
\begin{definition}
A one fixed-point $p'$ holomorphic connection for the space \eqref{ourbicomplex11} is defined by 
 \begin{equation}
\label{onecone}
\sum\limits_{ q, q' \in M/\F} \Phi \left( \omega_V(v_q, c(q) - c(q')) u, q \right) = \omega_W (u, p') \; 
\Phi \left(v, c(p) \right) + \omega_W(v, c(p)) \; \Phi \left(u, p' \right).    
\end{equation}
\end{definition}
In particular, for the space $C^1_m(p'_r; V, \W, \F)$ we obtain 
\begin{equation}
\label{onecone1}
 \Phi \left( \omega_V(v, p' - c(p)) u, c(p)  \right) = \omega_W (u, p') \; 
\Phi \left(v, c(p) \right) + \omega_W(v, c(p)) \; \Phi \left(u, p' \right),.    
\end{equation}
We denote the space of such connections with a fixed point $p$ as ${\mathcal Con}_{p'}(m; V, \W)$. 
In Section \ref{coboundary} we have introduced the notion (Definition \ref{fixedpointcohomology}) of a fixed-point  
cohomology  
$H^n_m\left(p; V, \W,  \F \right)$. In particular, for $n=1$,  
\[
H^1_m \left(p'_r; V, \W,  \F \right) = {\rm Ker}\; \delta^1_m / {\rm Im} \; \delta^{0}_{m+1}|_{p'_r},  
\]
for a point $p'_r \in U_r$ of a transversal basis $\U$. 
%
The result of this section is in the following  
%
\begin{proposition}
\label{propres}
The vertex algebra first cohomologies $H_{m}^{1}(V, \W, \F)$, 
 $m \ge 0$ of a codimension one foliation $\F$  
are isomorphic to the space ${\mathcal Con}_{p'_r}(m; V, \W)$, for all $p'_r \in U_r$, $1 \le r \le m$,   
of holomorphic fixed point two 
point connections on the space of leaves $\mathcal M/\F$ with mappings     
composable with $m$ vertex operators on transversal sections. 
\end{proposition}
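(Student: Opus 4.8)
The plan is to exhibit an explicit linear isomorphism between $H^1_m(V,\W,\F)$ and the space ${\mathcal Con}_{p'_r}(m;V,\W)$ of fixed-point two-point holomorphic connections on $\mathcal M/\F$, following the strategy already used in \cite{Hu3} and in Proposition \ref{1st-coh} for the case without a foliation, but now carried out on the spaces \eqref{ourbicomplex} and \eqref{ourbicomplex11}. Recall from \eqref{cohom} that $H^1_m(V,\W,\F)=\ker\delta^1_m/\mathrm{im}\;\delta^0_{m+1}$, and from Definition \ref{fixedpointcohomology} that the fixed-point version reads $H^1_m(p'_r;V,\W,\F)={\rm Ker}\;\delta^1_m/{\rm Im}\;\delta^0_{m+1}|_{p'_r}$. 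By the last Lemma of Section \ref{firstcohomology}, $H^1_m(V,\W,\F)=\bigcup_{p'_r\in U_r}H^1_m(p'_r;V,\W,\F)$, so it suffices to produce, for each fixed $p'_r\in U_r$, a linear isomorphism $H^1_m(p'_r;V,\W,\F)\cong {\mathcal Con}_{p'_r}(m;V,\W)$ that is natural in $p'_r$.

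First I would analyze $\ker\delta^1_m$. For $\Phi\in C^1_m(V,\W,\F)$ the condition $\delta^1_m\Phi=0$ is, by \eqref{hatdelta1} with $n=1$, exactly the two-point connection identity: writing out the three terms of $G(p_1,p_2)$ and using the identification \eqref{locus00}, one sees that $\delta^1_m\Phi=0$ says
\[
\Phi\!\left(\omega_V(v, c(p)-c(p'))u, c(p')\right)=\omega_W(u,c(p'))\,\Phi(v,c(p))+\omega_W(v,c(p))\,\Phi(u,c(p')),
\]
which is precisely equation \eqref{onecone} defining a two-point holomorphic connection on $\mathcal M/\F$. Setting the second argument to the fixed point $p'_r$ (i.e. $c(p')|_{p'_r}=0$) turns this into the fixed-point relation \eqref{onecone1}, so restriction of the second variable gives a well-defined linear map $\ker\delta^1_m\to{\mathcal Con}_{p'_r}(m;V,\W)$, $\Phi\mapsto\Phi(\cdot,\cdot)|_{p'=p'_r}$. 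I would use the $L(-1)$-derivative property \eqref{lder1} (equivalently the intertwiner form of $\delta$ in the Remark of Section \ref{coboundary}) to show that the full connection $\Phi(v,c(p))$ is recovered from its fixed-point restriction by $\Phi(v,c(p))=e^{c(p)L_W(-1)}\Phi(v,0)$-type translation along the leaf, so the map is injective on $\ker\delta^1_m$ modulo nothing lost; composability with $m$ vertex operators on the transversal sections is preserved under this restriction by the very definition of $C^1_m(p'_r;V,\W,\F)$ in Definition \ref{fixedpointphi}, using Lemma \ref{complexlemma}.

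Next I would identify the image of $\delta^0_{m+1}$. Since $C^0_{m+1}(V,\W,\F)=W$ by \eqref{proval}, for $w\in W$ the coboundary $\delta^0_{m+1}w$ is, from \eqref{hatdelta1} with $n=0$ (only the first and last terms survive, and they coincide up to sign with the standard expression), the "inner" two-point connection $v\mapsto \omega_W(v,c(p))\,w - (\text{translated } w)$, i.e. a principal/coboundary connection; restricting to $p'=p'_r$ these span exactly ${\rm Im}\;\delta^0_{m+1}|_{p'_r}$. Passing to the quotient, the restriction map descends to a linear map $H^1_m(p'_r;V,\W,\F)\to{\mathcal Con}_{p'_r}(m;V,\W)$, and I would check surjectivity by showing any fixed-point connection $\Phi\in{\mathcal Con}_{p'_r}(m;V,\W)$ extends (via leaf-translation by $e^{c(p')L_W(-1)}$) to a genuine element of $\ker\delta^1_m$, and injectivity by tracking that a fixed-point connection which is a fixed-point coboundary comes from an actual element of $W$, again invoking Lemma \ref{f-one} to handle the value on $\one_V$. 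Finally, taking the union over $p'_r\in U_r$ and using Lemma \ref{nezu} and Proposition \ref{nezc} for independence of $\U$ and of the choice of coordinates on $\mathcal M/\F$ gives the stated isomorphism. \emph{The main obstacle} I anticipate is verifying that the restriction-to-fixed-point and leaf-translation-extension operations genuinely respect the composability conditions \eqref{Inm}, \eqref{Jnm} and the domains of convergence \eqref{popas} uniformly over the intersection defining \eqref{ourbicomplex11} — i.e. that no poles or convergence failures are introduced when one specializes $c(p')\to 0$ and then re-translates along the leaf; this is the analytic heart of the argument and parallels the delicate estimates already carried out in the proof of Lemma \ref{empty}.
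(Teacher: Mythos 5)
Your overall architecture --- reduce to the fixed-point complex via Lemma \ref{complexlemma}, identify $\ker\delta^1_m$ with the fixed-point two-point connections \eqref{onecone1}, construct an inverse map by extending a connection along the leaf, and then take the union over $p'_r\in U_r$ --- is essentially the paper's. But there are two concrete gaps. First, you misidentify ${\rm Im}\;\delta^0_{m+1}$. In the convention of \eqref{hatdelta1} with $n=0$ the middle sum is empty and the two surviving terms are the \emph{same} expression $\langle w',\omega_W(v,c(p))\,\Phi'\rangle$ with opposite signs (equivalently, $E^{W;(1)}_{WV}\circ_{1}\Phi'=E^{(1)}_{W}\circ_{0}\Phi'$ by the definition of the $E$-elements in Appendix \ref{properties} together with \eqref{wprop}); the paper computes this directly and concludes ${\rm Im}\;\delta^{0}_{m+1}\Phi'=\{0\}$, so the first cohomology is just $\ker\delta^1_m$. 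Your claim that the zeroth coboundaries form a nonzero space of ``inner'' connections that must be quotiented out would, if taken seriously, yield ${\mathcal Con}_{p'_r}(V,\W)$ modulo inner connections rather than the full space asserted in the Proposition; the two terms you write down are equal, not merely ``coinciding up to sign with the standard expression,'' so there is nothing to quotient by.

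Second, the step you defer as the ``analytic heart'' --- that the extension of a fixed-point connection $g$ to an element of $\ker\delta^1_m$ respects composability with $m$ vertex operators --- is not left as an open estimate in the paper: it is resolved by writing the extension in intertwining-operator form, $\Phi_g=g(\omega_V(v,z)\one_V,p'_r)=\omega^{W}_{WV}(g(v,p'_r),z)\one_V$ (using Lemma \ref{f-one} to kill the value on $\one_V$), and then citing \cite{FHL} both for the fact that $v\mapsto\omega^{W}_{WV}(g(v,p'_r),z)\one_V$ is composable with $m$ vertex operators for every $m$, and (via Theorem 5.6.2 there) for the vanishing of $\delta^1_m\Phi_g$ after the chain of rewritings using \eqref{wprop}. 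Your ``leaf-translation by $e^{c(p')L_W(-1)}$'' is morally the same object, but without passing to the intertwining-operator representation the surjectivity half of your argument remains unproved rather than merely delicate.
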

\begin{remark}
%
In contrast to the cohomologies $H_{m}^{1}(V, W)$ for a grading-restricted vertex algerba \cite{Huang}, 
 the cohomologies $H_{m}^{1}(V, W, \F)$ are not isomorphic for various $m$,  
since they contain dependence not on just formal parameters, but these formal parameters are identified with 
local coordinates around points on $\mathcal M$ on either the leaves of $\mathcal M/\F$ or transversal sections. 
 Indeed, connections $\mathcal G_m(v, z)$ are elements of the space $C^1_m(V, \W, \F)$, i.e., they 
are composable with $m$ vertex operators. 
\end{remark}
Now we proceed with the proof of Proposition \ref{propres}. 
\begin{proof}
Let us fix a point $p'_r$ with the local coordinate $t_r(p'_r)$ on the transversal section $U_r$   
 with origin at $p'_r$, i.e.,   
$t_r(p'_r)|_{p'_r}=0$. 
According to Proposition \ref{1st-coh} (cf. (1.1) in \cite{Hu3}), 
the cohomologies $H^1_m(V, W)$ of $V$ are given by the space of derivations. 
In terms of Definition \ref{fpconnection}, it coinsides with the space of 
fixed point holomorphic connections, i.e.,  ${\rm Der(V, W)}={\mathcal Con}_{p'_r}(V, W)$.   
Note that, 
for any 
\[
\Phi(v, c_r(p'_r)|_{p'_r}) \in C^1_m(p'_r; V, \W, \F),
\]  
such that 
\[
\langle w', \delta^1_m \Phi\left(v, p'_r \right) \rangle= \langle w', G_2\left(p'_r, p_2 \right) \rangle 
 =0,  
\]
i.e., 
\begin{eqnarray}
\label{probab1} 
0 &=& 
 \langle w',    
\omega_{W} \left(v_1, p'_r \right) \; \Phi \left(v_2, c_2(p_{2}) \right) \rangle  
\nn
&-&  
\langle w', \Phi \left( \omega_{V} (v_{1}, p'_r - c_2(p_{2})) v_{2}, c_2(p_2) \right) \rangle 
\nn 
\qquad \qquad   &+& 
\langle w', \omega_{W}( v_{2}, c_2(p_{2}) )  \; 
\Phi \left( v_{1}, p'_r \right) \rangle,   
\end{eqnarray}
i.e.,  \eqref{probab1} results in an element of the space 
${\mathcal Con}_{p'_r}(V, \W)$
 of one fixed point $p'_r$ holomorphic connections.    
In addition, by direct computation for any $\Phi' \in C^0_m(p'_r; V, \W, \F)$, we find 
\[
 \langle w', \delta^0_{m+1} \Phi' \rangle 
=  
\langle w', \omega_{V}(v, z) \; \Phi' \rangle  - \langle w', \omega_{V}(v, z) \; \Phi' \rangle
=0. 
\]
i.e., 
\[
{\rm Im} \; \delta^0_{m+1} \Phi'=\{0\}.
\] 

Conversely, for any element $g(v, 0)$ of ${\mathcal Con}_{p'_r}(V, \W)$, and $v \in V$,  
let us consider 
\begin{equation}
\label{lastlast}
\Phi_{g} = g(\omega_V(v, z)\one_V, p'_r)= \omega^{W}_{WV}( g(v, p'_r), z )\one_V, 
\end{equation} 
where we have used Lemma \ref{f-one}.
We had to express \eqref{lastlast} in terms of intertwining operator in order to show that 
\eqref{lastlast} is indeed composable with 
$m$ vertex operators and belong to the space $C^1_m(p'_r; V, \W, \F)$ with a fixed point $p'_r$.  
As it follows from 
\cite{FHL}, 
the map from $V$ to $\W_{z}$  given by 
\[
 v \mapsto \omega_{WV}^{W}( \Phi_g(v, p'_r), z_{1}) \one_V,  
\]
 is composable with 
$m$ vertex operators for any $m\in \N$.  
Thus $\Phi_{g} \in C_{m}^{1}(V, \W, \F)$ for any $m \in \N$. 
For $v_{1}$, $v_{2}\in V$, and $w'\in W'$, 
by using 
\eqref{wprop}, 
we find by direct computation 
\begin{eqnarray*}
&&
\langle w', \delta_{m}^{1} \Phi_g(v_{1}, c_1(p_1);  v_{2},  c_2(p_2))  
\nn
&&
=
\langle w', 
\omega_{W}(v_{1}, c_1(p_1) ) \; \omega_{WV}^{W}(g(v_{2}, p'_r), c_2(p_2))\one_V\rangle 
\nn
&&
\quad - 
\langle w', \omega_{WV}^{W}( g(\omega_{V}(v_{1}, c_1(p_1)-c_2(p_2))v_{2}, p'_r), c_2(p_2))\one_V)  
\rangle 
\nn
&&
\quad 
+ 
\langle w', \omega_{W}(v_{2}, c_2(p_2)) \; \omega_{WV}^{W}(g(v_{1}, p'_r), c_1(p_1))\one_V\rangle
\end{eqnarray*}
\begin{eqnarray*}
&&= 
\langle w', 
\omega_{W}(v_{1}, c_1(p_1))\; \omega_{WV}^{W}( g(v_{2}, p'_r), c_2(p_2))\one_V\rangle 
\nn
&&\quad - 
\langle w', \omega_{WV}^{W} (\omega_{WV}^{W}(g(v_{1}, p'_r), c_1(p_1) - c_2(p_2))v_{2}), c_2(p_2) )\one_V)
\rangle  
\nn
&&\quad - 
\langle w', \omega_{WV}^{W} (\omega_{W}(v_{1}, c_1(p_1)- c_2(p_2)) \; g(v_{2}, p'_r), c_2(p_2))\one) 
\rangle 
\nn
&&
\quad +  
\langle w', \omega_{W}(v_{2}, c_2(p_2)) \; \omega_{WV}^{W}(g(v_{1},p'_r), c_1(p_1))\one_V\rangle 
\end{eqnarray*}
\begin{eqnarray}
&&= 
\langle w',  
\omega_{W}(v_{1}, c_1(p_1)) \; \omega_{WV}^{W} (g(v_{2}, p'_r), c_2(p_2))\one_V\rangle 
\nn
&&\quad - 
\langle w', e^{c_2(p_2) L_{W}(-1)}
\omega_{WV}^{W} (g(v_{1}, p'_r), c_1(p_1)- c_2(p_2))v_{2} 
\rangle 
\nn
&&\quad - 
\langle w', e^{ c_2(p_2) L_{W}(-1)}\omega_{W}(v_{1}, c_1(p_1)-c_2(p_2))g(v_{2}, p'_r) 
\rangle 
\nn
&&
\quad +  
\langle w', \omega_{W}(v_{2}, c_2(p_2)) \; \omega_{WV}^{W}(g(v_{1}, p'_r),
 c_1(p_1))\one_V\rangle  
\nn 
&&
\nn
&&= - 
\langle w', \omega_{WV}^{W}(g(v_{1}, p'_r), c_1(p_1)) \omega_{V}(v_{2}, c_2(p_2))\one_V\rangle 
\nn
\label{d-phi-f}
&&\quad
+ 
\langle w', \omega_{W}(v_{2}, c_2(p_2))\; \omega_{WV}^{W}(g(v_{1}, p'_r), c_1(p_1))\one_V\rangle.  
\nn
\end{eqnarray}
By using Theorem 5.6.2 in \cite{FHL}, we derive that 
\eqref{d-phi-f} vanishes. 
Therefore we obtain a linear map 
\[
g(v, p'_r) \mapsto \Phi_{g}, 
\]
 from the space 
\[
{\mathcal Con}_{p'_r}(V, \W) =\mbox{\rm Der}\;(V, \W) \to 
H_{m}^{1}(V, \W)=C_{m}^{1}(p'_r; V, \W).  
\]
Thus we find, that 
\begin{equation}
\label{partal}
H^1_m(p'_r; V, \W, \F) = {\mathcal Con}_{p'_r}(V, \W). 
\end{equation}
By moving $p'_r \in U_r$ all along $t_r(p'_r)$ we exhaust to all points at $U_r$, 
we obtain connections of ${\mathcal Con}_{U_r}(V, \W)$ on the whole $U_r$.  
By using Lemma \ref{complexlemma}, we 
extend \eqref{partal} to 
we obtain the statement of Proposition:
\[
H^1_m(V, \W, \F) = \bigcup\limits_{p_r \in U_r}{\mathcal Con}_{p'_r}(V, \W).    
\]
\end{proof}
\section{Cohomological classes}
\label{cohomological}
In this section we describe certain classes associated to the first and the second vertex algebra cohomologies
 for codimension one foliations.   
Usually, the cohomology classes for codimension one foliations \cite{G, CM, Ko} are introduced  by means of  
an extra condition (in particular, the orthogonality condition) applied to differential forms, and leading to 
the integrability condition. 
As we mentioned in Section \ref{coboundary}, it is a separate problem to introduce a product defined on one 
or among various spaces 
  $C^n_m(V, \W, \F)$ of \eqref{ourbicomplex}.  
Note that elements of $\mathcal E$ in \eqref{deltaproduct} and $\mathcal E_{ex}$ in \eqref{mathe2} can be seen as elements 
of spaces $C^1_\infty(V, \W, \F)$, i.e., maps composable with an infinite number of vertex operators. 
Though the actions of coboundary operators $\delta^n_m$ and $\delta^2_{ex}$ in \eqref{deltaproduct} and 
\eqref{halfdelta1} are written in form of a product 
 (as in Frobenius theorem \cite{G}), and, in contrast to the case of differential forms, 
 it is complicated to use these products 
 for further formulation 
of cohomological invariants and derivation of analogues of the Godbillon-Vey invariants.   
Nevertheless, even with such a product yet missing, it is possible to introduce the lower-level cohomological 
classes of the form $\left[ \delta \eta \right]$ which are counterparts of the Godbillon class \cite{Galaev}.    
Let us give some further definitions. 
By analogy with differential forms, let us introduce
\begin{definition}
\label{probab}
 We call a map 
\[
\Phi \in C_{k}^{n}(V, W, \F),  
\] 
closed if it is a closed connection: 
\[
\delta^{n}_{k} \Phi=G(\Phi)=0.
\]  
For $k \ge 1$, we call it exact if there exists 
$\Psi \in  C_{k-1}^{n+1}(V, W, \F)$  
such that $\Psi=\delta^{n}_{k} \Phi$, i.e., $\Psi$ is a form of connection.  
\end{definition}
For $\Phi \in {C}^{n}_{k}(V, W, \F)$ we call the cohomology class of mappings 
 $\left[ \Phi \right]$ the set of all closed forms that differ from $\Phi$ by an 
exact mapping, i.e., for $\chi \in {C}^{n-1}_{k+1}(V, W, \F)$,  
\[
\left[ \Phi \right]= \Phi + \delta^{n-1}_{k+1} \chi. 
\]

As we will see in this section, there are cohomological classes, 
(i.e., $\left[\Phi \right]$, $\Phi \in {C}^{1}_{m}(V, W, \F)$, $m\ge 0$),  
 associated with two-point connections and the first cohomology ${H}^{1}_{m}(V, W, \F)$, and classes  
(i.e., $\left[\Phi \right]$, $\Phi \in {C}^{2}_{ex}(V, W, \F)$),  
associated with transversal connections and the second cohomology ${H}^{2}_{ex}(V, W, \F)$, 
of $\mathcal M/\F$. 
The cohomological classes we obtain are vertex algebra cohomology counterparts of the Godbillon class 
\cite{Ko, Galaev}   
for codimension one foliations.  
\begin{remark}
As it was discovered in \cite{BG, BGG}, 
 it is a usual situation when the existence of a connection (affine of projective) 
for codimension one foliations on smooth manifolds prevents corresponding cohomology classes from vanishing.  
 Note also, that for a few examples of codimension one foliations, the cohomology class $\left[ d\eta \right]$  
is always zero.  
\end{remark}
\begin{remark}
In contrast to \cite{BG}, our cohomological class is a functional of $v\in V$. 
That means that the actual functional form of $\Phi(v, z)$ (and therefore $\langle w', \Phi\rangle$, for 
$w'\in W'$)    
 varies with various choices of $v\in V$. 
That allows one to use it in order to distinguish types of leaves of $\mathcal M/\F$. 
\end{remark}
\subsection{Classes associated with the first cohomologies $H^1_m(V, \W, \F)$}
For the first cohomology $H^1_m(V, \W, \F)$, we have the following corollary from Proposition \ref{propres}:   
\begin{corollary}
The $H^1_m(V, \W, \F)$ cohomological class of the grading-restricted vertex algebra cohomology of the leaf space 
$\mathcal M/\F$ is given by 
\begin{equation}
\label{tsfat}
\left[ \delta^1_m \Phi\right], 
\end{equation}
 for $\Phi\in C^1_m$. 
It's vanishing if and only if $\Phi$ is given by a two 
 point holomorphic connection. 
\end{corollary}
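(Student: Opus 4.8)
The plan is to read this corollary as a direct translation of Proposition \ref{propres} into the language of cohomological classes, so the argument should be short and should cite the isomorphism $H^1_m(V, \W, \F) \cong {\mathcal Con}_{p'_r}(m; V, \W)$ established there. First I would fix a section $U_r$ of a transversal basis $\U$ and a point $p'_r \in U_r$ with local coordinate $t_r(p'_r)$ vanishing at $p'_r$, exactly as in the proof of Proposition \ref{propres}, and recall that by Definition \ref{defcohomology} (in the form \eqref{cohom}) the class of a closed map $\Phi \in C^1_m(V, \W, \F)$ is the coset $[\Phi] \in \ker \delta^1_m / \operatorname{im} \delta^0_{m+1}$. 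By Definition \ref{probab} the cohomological class is represented by $\left[\delta^1_m \Phi\right]$ in the sense of the form $G(\Phi)$ of the associated two-point connection, so the first half of the statement is essentially the unwinding of \eqref{hatdelta1} for $n=1$ together with the identifications listed before \eqref{locus00}, which express $\delta^1_m \Phi$ as the connection form $G(p_1, p_2)$.

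Next I would address the ``if and only if'' claim. For the easy direction, suppose $\Phi$ is given by a two-point holomorphic connection in the sense of Definition \ref{onepconnection}, i.e. $\mathcal G = \Phi$ satisfies \eqref{onepointlocus}; comparing \eqref{onepointlocus} with the explicit form \eqref{hatdelta1} of $\delta^1_m \Phi$ (using $\mathcal G(\phi(p)) = \Phi(v, c(p))$, $f(\psi(p')) = \omega_W(v, c(p'))$ and $f(\psi(p')).\phi(p) = \omega_V(v, c(p')-c(p))u$) shows that the three terms of $\delta^1_m \Phi$ cancel, hence $\delta^1_m \Phi = 0$ and the class $\left[\delta^1_m\Phi\right]$ vanishes. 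For the converse, I would use Proposition \ref{propres}: if $\left[\delta^1_m \Phi\right] = 0$ then $\delta^1_m \Phi \in \operatorname{im}\delta^0_{m+1}$, but by the computation in the proof of Proposition \ref{propres} one has $\operatorname{Im}\delta^0_{m+1} = \{0\}$ on the relevant fixed-point space, so $\delta^1_m\Phi = 0$; and $\delta^1_m\Phi = 0$ is precisely the statement \eqref{locus00}, i.e. that $\Phi = \mathcal G$ satisfies the multi-point holomorphic connection identity, whose one-fixed-point reduction \eqref{onecone1} is the defining relation \eqref{onepointlocus} of a two-point connection. Invoking Lemma \ref{nezu} and Proposition \ref{nezc} guarantees this characterization is independent of the choices of $\U$ and of coordinates on $\mathcal M/\F$.

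The main obstacle I anticipate is bookkeeping rather than conceptual: one must be careful that the ``class'' $\left[\delta^1_m\Phi\right]$ as defined via Definition \ref{probab} (a set of closed forms differing by an exact map) is genuinely the image of $[\Phi]$ under the connecting data, and that the vanishing of the connection \emph{form} $G(\phi,\psi)$ of \eqref{gform} matches the vanishing of $\delta^1_m\Phi$ term-by-term once the dictionary before \eqref{locus00} is substituted. A secondary subtlety is that the composability of $\Phi$ with $m$ vertex operators must be preserved throughout — but this is already built into the spaces $C^1_m(V,\W,\F)$ and is exactly the content invoked in the proof of Proposition \ref{propres} via \cite{FHL}, so no new analytic input is needed. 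I would therefore present the corollary's proof as: (i) rewrite $\delta^1_m\Phi = G(p_1,p_2)$ explicitly; (ii) observe the class $\left[\delta^1_m\Phi\right]$ is the obstruction to $\Phi$ being closed; (iii) by $\operatorname{Im}\delta^0_{m+1}=\{0\}$, this class vanishes iff $\delta^1_m\Phi = 0$ iff \eqref{onepointlocus} holds iff $\Phi$ is a two-point holomorphic connection; (iv) note canonicity via Lemma \ref{nezu} and Proposition \ref{nezc}.
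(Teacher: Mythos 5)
Your proposal is correct and follows essentially the same route as the paper: the paper's own (very terse) proof likewise reduces the first claim to the invariance of $\left[\delta^1_m\Phi\right]$ under $\Phi\mapsto\Phi+\Phi_0$ and defers the ``if and only if'' entirely to the proof of Proposition \ref{propres}, which is exactly the computation (including ${\rm Im}\,\delta^0_{m+1}=\{0\}$ and the identification of $\delta^1_m\Phi=0$ with the connection identity \eqref{locus00}) that you spell out. Your version simply makes explicit the steps the paper leaves implicit, so no new idea or different decomposition is involved.
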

\begin{remark}
Non-vanishing cohomological invariants of the form \eqref{tsfat} are used in Section \ref{characterization} in order to 
characterize leaves of $\mathcal M/\F$ and transversal sections. 
\end{remark}
\begin{proof}
$\left[ \delta^1_m \Phi\right]$ for $\Phi\in C^1_m$.
It is easy to see that it remains cohomologically invariant under a substitution 
\[
\Phi \mapsto \Phi + \Phi_0, 
\]
due to properties of \eqref{tsfat}. 
The second statement of the proposition 
 follows from the proof of Proposition \ref{propres}. 
 In Subsection \ref{bordo} we will explain which role the cohomological invariant \eqref{tsfat} for foliation $\F$.
\end{proof}
\subsection{Classes associated with exceptional cohomology} 
In this subsection we consider the exceptional cohomology $H^2_{ex}(V, \W, \F)$ associated to the short complex 
 \eqref{hat-complex-half}, 
and corresponding cohomological class. 
Let us first recall some definitions \cite{Hu3}  concerning the notion of square-zero extension 
of $V$ by its module $W$ which is an analogue of the notion of 
square-zero extension of an associative algebra by a bimodule (cf. 
\cite{W}).
\begin{definition}
 Let $V$ be a grading-restricted vertex algebra.  
 A square-zero ideal of $V$ is an ideal $W$ of $V$ such that 
for any $u$, $v\in W$,
\[
Y_{V}(u, x)v=0.
\] 
\end{definition}
\begin{definition}
 Let $V$ be a grading-restricted vertex algebra and $W$ 
a $\Z$-graded $V$-module. 
A  square-zero extension $(V_W, \gamma, \alpha)$ of $V$ by $W$ 
is a grading-restricted vertex algebra $V_W$   
together with a surjective homomorphism  
\[
\gamma: V_W \to V, 
\] 
of grading-restricted vertex algebras such that $\ker \gamma$ 
is a square-zero ideal of $V_W$ (and therefore a $V$-module)
and an injective homomorphism $\alpha$ of $V$-modules from $W$ to $V_W$ 
such that 
\[
\alpha(W)=\ker \gamma.
\] 
\end{definition}
\begin{definition}
Two square-zero extensions $(V_{W,1}, \gamma_{1}, \alpha_{1})$
 and 
$(V_{W,2}, \gamma_{2}, \alpha_{2})$ of $V$ by $W$ are equivalent if there 
exists an isomorphism of grading-restricted vertex algebras
$h: V_{W, 1}\to V_{W, 2}$ 
such that the diagram
\[
\begin{CD}
0@>>> W @>>\alpha_{1}> V_{W, 1} @>>\gamma_{1}>V @>>>0\\
@. @V{\rm Id}_{W}VV @VhVV @VV{\rm Id}_{V}V\\
0@>>> W @>>\alpha_{2}> V_{W,2} @>>\gamma_{2}>V @>>>0,
\end{CD}
\]
is commutative.
\end{definition}
Let $(V_W, \gamma, \alpha)$ be a square-zero extension of $V$ by $W$. 
It is possible to construct a realization of the square-zero extension of $V$ by $W$ on $Z=V\bigoplus W$. 
Then there exists an injective linear map  $\Gamma: V\to V_W$, 
such that the linear map 
\[
h: Z \to V_W, 
\]
 given by
\[
h(v, w)=\Gamma(v)+ \alpha(w), 
\]
 is a linear isomorphism. 
By definition,
the restriction of $h$ to $W$ is the isomorphism $\alpha$
from $W$ to $\ker \gamma$.
 Then the grading-restricted vertex algebra structure
and the $V$-module structure on $V_W$  
give a grading-restricted vertex algebra structure and a
$V$-module structure on $Z$ such that the embedding 
$i_{2}: W\to Z$  and 
the projection $p_{1}: Z \to V$, 
are  homomorphisms of grading-restricted vertex algebras. 
In addition to that, $\ker p_{1}$ is a square-zero ideal of $Z$, $i_{2}$ is an injective homomorphism
such that $i_{2}(W)=\ker p_{1}$ and
the diagram
\begin{equation}
\begin{CD}
0@>>> W @>i_{2}>> Z @>p_{1}>>V @>>>0\\
@. @V{\rm Id}_{W}VV @VhVV @VV{\rm Id}_{V}V\\
0@>>> W @>>\alpha> V_W @>>\gamma>V @>>>0 
\end{CD}
\end{equation}
of $V$-modules 
is commutative. 
Thus one obtains a  square-zero extension $(Z, p_{1}, i_{2})$ 
equivalent to $(V_W, \gamma, \alpha)$.
It is enough then to consider square-zero extensions
of $V$ by $W$ of the particular form $(Z, p_{1}, i_{2})$.   
 The difference between  
two such square-zero extensions consists in the vertex operator maps. 
Such square-zero extensions will be denotesd by $(Z, Y_{Z}, p_{1}, i_{2})$.   

Let us first mention the geometrical meaning of the square-zero extension $(V_W, \gamma, \alpha)$ of $V$ by $W$.    
Let us consider $u$, $v$ belong to the square-zero ideal of a grading-restricted vertex algebra $V$, then 
\[
\omega_{V} (u, c(p))v=0.  
\]
Then, geometrically it means that corresponding vertex algebra holomorphic connections are 
transversal (cf. Definition \ref{transcon}): 
\begin{equation}
\label{tranzo}
G_{tr}(p,p')=\omega_W \left(v, c(p') \right) \Phi\left(u, c(p)\right) 
+ \omega_W \left(u, c(p) \right) \; \Phi \left(\psi, c(p')\right)=0.  
\end{equation}
Note that, for a square-zero ideal,  the full form of holomorphic connection has a reduced form 
\eqref{tranzo}. 
In \cite{BG, BGG} it was shown that certain cohomological class vanishes if and only if 
the exist an affine or projective connection. 
In our setup the holomorphic connection plays a similar role: if it has does not have an full closed form
 \eqref{locus00}, then 
the cohomology class it non-trivial. 

In \cite{Hu3} we find the proof of the following algebraic result for the second cohomology of a grading-restricted 
vertex algebra $V$, 
$H^{2}_{ex}(V, W)$ of $V$ with coefficients in $W$. 
It follows from that Proposition, that  
the difference between 
two square-zero extensions are controled by 
 the vertex operator map for the square-zero extension defined for $Z=V\bigoplus W$.  
\begin{proposition}
\label{h2-ext}
Let $V$ be a grading-restricted vertex algebra and $W$ 
a $V$-module.
 Then the set of the equivalence classes of 
square-zero extensions of $V$ by $W$ corresponds bijectively to $H^{2}_{ex}(V, W)$.
\end{proposition}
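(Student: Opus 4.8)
The plan is to establish the bijection between equivalence classes of square-zero extensions of $V$ by $W$ and the cohomology $H^2_{ex}(V,W)$ in two directions, following closely the template of the analogous result for associative algebras (cf. \cite{W}) and its vertex-algebraic incarnation in \cite{Hu3}. First I would recall that, by the reduction explained just above the statement, every square-zero extension is equivalent to one of the standard form $(Z, Y_Z, p_1, i_2)$ with $Z = V \oplus W$ as a graded vector space, so that the only datum that varies is the vertex operator map $Y_Z$. Writing $Y_Z(v,x)v' = Y_V(v,x)v' + \Psi(v,x)v'$ for $v,v'\in V$ (the $W$-components), where $\Psi$ measures the deviation of $Y_Z$ from the split extension, one checks that $\Psi$ is precisely a linear map $V^{\otimes 2}\to W_x$ of the type entering $C^2_{ex}(V,\W,\F)$; the condition that $Y_Z$ endows $Z$ with a grading-restricted vertex algebra structure (truncation, $L(-1)$-derivative, $L(0)$-conjugation, and the relevant rationality/associativity axioms for three insertions) translates, after expanding modulo the square-zero condition $Y_V(w,x)w'=0$, into the cocycle condition $\delta^2_{ex}\Psi = 0$, i.e. $\Psi \in \ker \delta^2_{ex}$. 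This uses the explicit form \eqref{ghalfdelta1} of $\delta^2_{ex}$ and the fact, recorded in Lemma \ref{lemmo}, that such $\Psi$ lie in $C^2_{ex}(V,\W,\F)$ and are composable with the requisite vertex operators.

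Next I would verify that changing the splitting $\Gamma\colon V\to V_W$ (equivalently, composing $h$ with an automorphism of $Z$ fixing $W$ and inducing the identity on $V$) modifies $\Psi$ by a coboundary $\delta^1_2\chi$ for some $\chi \in C^1_2(V,\W,\F)$ coming from the linear map $V\to W$ that records the change of splitting; conversely any such coboundary is realized this way. Hence the class $[\Psi]\in H^2_{ex}(V,W) = \ker\delta^2_{ex}/\operatorname{im}\delta^1_2$ depends only on the equivalence class of the extension, giving a well-defined map from equivalence classes to $H^2_{ex}(V,W)$. For surjectivity, given a cocycle $\Psi\in C^2_{ex}(V,\W,\F)$ with $\delta^2_{ex}\Psi=0$, I would define $Y_Z$ on $Z=V\oplus W$ by $Y_V$ on the $V$-part, the $V$-module action on the $W$-part, and the correction $\Psi$ in the mixed $V\otimes V\to W$ slot, and check that the cocycle identity is exactly what is needed for $(Z,Y_Z)$ to be a grading-restricted vertex algebra with the square-zero ideal $W$; this produces an extension whose class is $[\Psi]$. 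Injectivity follows from the coboundary analysis: two standard-form extensions with cohomologous $\Psi$'s are related by an explicit isomorphism $h$ built from the cochain $\chi$ trivializing the difference, and one checks $h$ makes the required diagram commute.

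The main obstacle I expect is the verification that the vertex algebra axioms for $(Z,Y_Z)$ are equivalent to the single equation $\delta^2_{ex}\Psi=0$: the rationality and associativity/commutativity axioms for products of three vertex operators are what force the cocycle condition, and one must match the three-insertion expansions against the four terms of \eqref{ghalfdelta1} (the $\omega_W$-prefix term, the two $E^{(2)}_{V;\one_V}$-type terms, and the $\omega_W$-postfix term), keeping careful track of the convergence regions and analytic continuations specified in Lemma \ref{lemmo}. The quasi-conformality hypothesis and the composability conditions guarantee all the relevant series converge and extend to rational $\W$-valued forms, so the formal manipulations are justified, but organizing the bookkeeping so that the correspondence is manifestly bijective — in particular checking that no information is lost when passing from $Y_Z$ to $\Psi$ and back — is the delicate part. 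The rest of the argument, being linear-algebraic, is routine once the dictionary is set up, so I would relegate those checks to a reference to the parallel computation in \cite{Hu3} and only highlight the points where the foliation data (coordinates on transversal sections versus on leaves) enters.
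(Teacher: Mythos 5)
Your proposal follows essentially the same route as the paper: the paper itself does not prove this proposition but cites \cite{Hu3} for it, and the construction it does spell out (in the proof of the subsequent corollary) is exactly your dictionary --- extract $\omega_\Psi$ as the $W$-component of $Y_Z$ on $V\otimes V$, package it as the two-variable cochain $\Phi(v_1,z_1;v_2,z_2)=E(\omega_\Psi(v_1,z_1)\omega_V(v_2,z_2)\one_V)$, show associativity of $Y_Z$ gives $\delta^2_{ex}\Phi=0$ and change of splitting gives a $\delta^1_2$-coboundary, and invert by reading off $\omega_\Psi$ from a cocycle. The only point worth flagging is that the element of $C^2_{ex}(V,\W,\F)$ is this vacuum-completed two-variable object in $\W_{z_1,z_2}$, not the one-variable series $\Psi(v,x)v'$ itself, but this is the same bookkeeping you already defer to \cite{Hu3}.
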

Now we formulate the following corollary from Proposition \ref{h2-ext}.  
\begin{corollary}
\label{h2-ext}
Let $V$ be a grading-restricted vertex algebra and $W$ 
a $V$-module. 
The classes of square-zero extensions of $V$ by $W$ are isomorphic to 
  classes of 
cohomological invariants $
 \Phi$ \eqref{ef} of $H^{2}_{ex}(V, \W, \F)$. 
\end{corollary}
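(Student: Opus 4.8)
The plan is to derive the corollary directly from Proposition \ref{h2-ext} together with the identification of the short complex \eqref{hat-complex-half} worked out in Section \ref{coboundary}. First I would recall that, by Proposition \ref{h2-ext}, the set of equivalence classes of square-zero extensions $(Z, Y_Z, p_1, i_2)$ of $V$ by $W$ is in bijection with $H^2_{ex}(V, W)$, the bijection being the standard one: given such an extension realized on $Z = V\oplus W$, the difference of its vertex operator map from a fixed reference map is a bilinear map $V\otimes V \to W$ which, after the identification of spaces described in Lemma \ref{lemmo}, defines an element of $C^2_{ex}(V, \W, \F)$, and the square-zero (associativity/commutativity) constraints on $Y_Z$ translate precisely into the closedness condition $\delta^2_{ex}\Phi = 0$ with $\Phi$ of the form \eqref{ghalfdelta1}. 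Conversely, a closed $\Phi \in C^2_{ex}(V,\W,\F)$ is used to twist the reference vertex operator map and thereby build a square-zero extension; changing the choice of linear splitting $\Gamma$ shifts $\Phi$ by an element of $\mathrm{im}\,\delta^1_2$, so the extension's equivalence class depends only on the cohomology class $[\Phi] \in H^2_{ex}(V,\W,\F)$.

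Next I would check that the $\W_{c_1(p_1),c_2(p_2)}$-valued reformulation does not affect this correspondence. The point is that the spaces $C^2_{ex}(V, \W, \F)$ and the coboundary $\delta^2_{ex}$ of \eqref{ghalfdelta1} were built (Lemma \ref{lemmo} and Proposition \ref{cochainprop}) so as to be parallel, term by term, to the grading-restricted vertex algebra complex of \cite{Hu3}, the only difference being the target space $\W_{z_1,\ldots,z_n}$ in place of $W_{z_1,\ldots,z_n}$; and by Lemma \ref{nezu} and Proposition \ref{nezc} this construction is canonical, i.e. independent of $\U$ and of the foliation-preserving coordinates on $\mathcal M/\F$. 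Hence the bijection of Proposition \ref{h2-ext} carries over verbatim: equivalence classes of square-zero extensions of $V$ by $W$ are in bijection with the classes of cohomological invariants $\Phi$ of $H^2_{ex}(V, \W, \F)$, which is exactly the assertion. Linearity of the correspondence follows because twisting the vertex operator map by $\Phi_1 + \Phi_2$ composes the two twists, and the zero class corresponds to the trivial (split) extension.

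The main obstacle I expect is verifying that the closedness equation $\delta^2_{ex}\Phi = 0$, read off from the four-term expression \eqref{ghalfdelta1}, is genuinely equivalent to the full set of axioms (weak commutativity and associativity for $Y_Z$ on the square-zero ideal, compatibility with $L(-1)$ and $L(0)$, convergence of the relevant series in the prescribed domains) required for $(Z, Y_Z, p_1, i_2)$ to be a grading-restricted vertex algebra. In the purely algebraic setting of \cite{Hu3} this is precisely what makes the proof of Proposition \ref{h2-ext} substantive; here one must additionally track that the composability conditions \eqref{Inm}, \eqref{Jnm} defining $C^2_{ex}(V,\W,\F)$, and the convergence regions displayed in the proof of Lemma \ref{lemmo}, match the convergence demands coming from the extension axioms. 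I would handle this by invoking Proposition \ref{comp-assoc} (Appendix \ref{composable}) to supply the needed associativity and convergence, exactly as in the proof of Lemma \ref{lemmo}, and then appeal to \cite{Hu3} for the bookkeeping that closedness plus exactness encode the extension data up to equivalence.
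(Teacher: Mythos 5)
Your proposal is correct and follows essentially the same route as the paper: the paper likewise extracts from a square-zero extension $(Z,Y_Z,p_1,i_2)$ the component $\omega_\Psi$ of the vertex operator map \eqref{Y_V+W} (your ``difference from a fixed reference map''), packages it as the element $\Phi$ of \eqref{ef}, uses associativity in $Z$ together with Proposition \ref{comp-assoc} to verify membership in $C^{2}_{ex}(V,\W,\F)$ and closedness, and inverts the construction by reading $\omega_\Psi$ off a closed representative, with equivalences of extensions matching shifts by $\delta^{1}_{2}C^{1}_{2}(V,\W,\F)$. The only presentational difference is that the paper re-runs this construction explicitly in the $\W_{z_1,\ldots,z_n}$-valued setting rather than formally transporting the bijection of Proposition \ref{h2-ext} via the canonicity statements, but the content is the same.
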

\begin{proof}
Now let us consider the cohomology $H^{2}_{ex}(V, \W, \F)$. 
Here, for $\Phi \in C^{2}_{ex}(V, \W, \F)$, 
 the kernel of $\delta^{2}_{ex}$ 
has the form of closed three-variable connection: 
\begin{eqnarray*}
&&
0= \delta^{2}_{ex} \Phi=  \mathcal G(p_1, p_2, p_3) 
\nn
&&  
= 
  \langle w', \omega_{W} ( v_{1}, c_1(p_1) ) 
\;  
  \Phi  \left( v_{2}, c_2(p_2);  v_{3}, c_3(p_3) \right)  \rangle 
\nn
&&\quad 
- \langle w',
\Phi \left(   
\omega_V (v_{1}, c_1(p_1)) \; v_{2}, c_2(p_2); 
 v_{3}, c_3(p_3) \right) \rangle 
\nn
&&
\quad \quad 
+ 
\langle w',  \Phi \left( v_{1}, c_1(p_1); 
\omega_V ( v_{2}, c_2(p_2) ) \;v_{3}, c_3(p_3) 
 \right)  
 \rangle 
\nn
 &&\quad \quad 
- \langle w',  
\omega_{W} (v_{3}, c_3(p_3) )  \; \Phi \left( v_{1}, c_1(p_1); v_{2}, c_2(p_2) \right)   
\rangle, 
\end{eqnarray*}
for $w'\in W'$,
$\Phi\in {C}_{ex}^{2}(V, \W, \F)$,
$v_{1}, v_{2}, v_{3}\in V$,
 and $(z_{1}, z_{2}, z_{3})\in F_{3}\C$.   
For $\Phi' \in C^1_2(V, \W, \F)$, 
 the image of $\delta^1_2$, 
 has a non-closed connection form: 
\begin{eqnarray}
 &&
{\delta}^{1}_{2} \Phi' = G_2(p_1, p_2)  
\nn
 && \qquad = 
\langle w', \omega_{W} \left(  v_{1}, c_1(p_1) \right)  \;  
\Phi' \left(  v_{2}, c_2(p_2) \right) \rangle 
\nn
&&
\qquad -  
\langle w',  
\Phi' \left(  \omega_{V} \left( v_{1}, c_1(p_1) \right) v_{2}, c_2(p_2) \right) \rangle 
\nn
 &&\quad \quad 
+  
\langle w',  
 \omega_W \left(v_{2}, c_2(p_2) \right) \; \Phi' \left(v_{1}, c_1(p_1) \right)     
\rangle,  
\end{eqnarray}
for $w'\in W'$,
$\Phi\in {C}_{2}^{1}(V, \W, \F)$, 
$v_{1}, v_{2} \in V$ and $(z_{1}, z_{2})\in F_{2}\C$. 
 The cohomology $H^2_{ex}(V, \W, \F)$ is given by the factor-space of closed three-variable
 connections over non-closed two-variable connections. 
We will show now that this 
factor space is actually parameterized by the vertex operator maps for a square-zero extension of 
$V$ by $W$. 

The explicit definition for $Z$-vertex operator was introduced in \cite{Hu3}.   
We denote by $(Z, Y_Z, p_{1}, i_{2})$ a suitable square-zero extension of $V$ by $W$.
Then there exists 
\[
\omega_\Psi\left(
 u, z\right)v \; \in \W((z)), 
\]
 for $u$, $v\in V$ such that 
\begin{eqnarray*}
\omega_{Z}((v_{1}, 0), z)(v_{2}, 0)&=&(\omega_{V}(v_{1}, z)v_{2}, \omega_\Psi(v_{1}, z)v_{2}),
\\
\omega_{Z}((v_{1}, 0), z)(0, w) & =& (0, \omega_{V}(v_{1}, z)w_{2}),
\\
\omega_{Z}((0, w_{1}), z)(v_{2}, 0)&=&(0, \omega^{W}_{WV}(w, z)v_{2}),
\\
\omega_{Z}((0, w_{1}), z)(0, w_{2})&=& 0,   
\end{eqnarray*}
for $v_{1}$, $v_{2}\in V$ and $w_{1}$, $w_{2}\in W$. 
Thus one has 
\begin{eqnarray}
\label{Y_V+W}
&&\omega_{Z}((v_{1}, w_{1}), z)(v_{2}, w_{2}) 
\\
&& \quad =\left(\omega_{V}(v_{1}, z)v_{2},  
\omega_{W}(v_{1}, z) \; w_{2} 
 + \omega_{WV}^{W}(w_{1}, z)\;v_{2}   + \omega_\Psi(v_{1}, z)\;v_{2}\right), 
\nonumber
\end{eqnarray}
for $v_{1}, v_{2}\in V$ and $w_{1}, w_{2}\in W$. 
The vacuum of $Z$ is given by  $(\one_V, 0)$, and 
\[
\omega_\Psi(v, z)\one_V=0,
\]
 and 
the dual space $Z'$ for $Z$ is identified with 
\[
Z'=V'\oplus W'. 
\] 
By Definition \ref{grares} of a grading-restricted vertex algebra, 
for $v$, $v'\in V$, 
vertex operators $\omega_\Psi(v, z)$ and  $\omega_{V}(v', z')$ in extension $(V_W, \gamma, \alpha)$,
satisfy the associativity property, i.e., 
their matrix 
elements of \eqref{porosyata}  
converge (under appropriate conditions for local coordinates of points)
 to the same $\W_{z_{1}, z_{2}}$-valued rational function. 
Thus, for $v_{1}, v_{2}\in V$, and $(z_1, z_2) \in F_2\C$, we introduce a linear map 
\[
\Phi: V\otimes V\to \W_{z_{1}, z_{2}}, 
\]
\begin{eqnarray}
\label{ef}
\Phi(v_1, z_1; v_2, z_2)&=&  E(\omega_\Psi(v_{1}, z_{1})\; \omega_{V}(v_{2}, z_{2})\one_V)
\nn
&=&  E(\omega_\Psi(v_{2}, z_{2}) \; \omega_{V}(v_{1}, z_{1})\one_V) 
\nn
&=& E(\omega_{WV}^{W}(\omega_\Psi(v_{1}, z_{1}-z_{2}) v_{2}, z_{2})\one_V). 
\end{eqnarray} 
As in the proof of Proposition \ref{h2-ext} we check that $\Phi$ \eqref{ef} satisfies the $L(-1)$-derivative and $L(0)$- 
conjugation properties.
Since $Z$ is a grading-restricted vertex algebra, 
 by using the associativity property for vertex operators \eqref{Y_V+W},
we see that the conditions \eqref{pervayaforma} and \eqref{vtorayaforma} for forms $G_i$, $i=1$, $2$, in the proof of 
 Lemma \ref{lemmo} of the space  $C^2_{ex}(V, \W, \F)$  for $\Phi$ are satisfied, 
and $\Phi \in {C}_{ex}^{2}(V, \W, \F)$.   
Using again corresponding associativity properties 
for vertex operators in $Z$, we find that 
 $\Phi$ is closed (according to our Definition \ref{probab}),   
i.e., 
\[
 \delta_{ex}^{2}\Phi=0.
\] 
Thus, we see that, for a representative of the class of square-zero extension 
 $(Z, Y_Z, p_{1}, i_{2})$  
corresponds by the formula \eqref{ef} for $\omega_{Z}$ to an element 
of $H_{ex}^{2}(V, \W, \F)$, 
\[
\left[ \Phi \right] = \Phi + \eta, 
\]
where $\eta$ be an element $\delta_{2}^{1} C_{2}^{1}(V, \W, \F)$. 
It is easy to see that, according to properties of the above construction
$\Phi$ is invariant with respect to a substitution 
\[
\Phi \mapsto \Phi + \mu, 
\]
for $\mu \in C^{2}_{ex}(V, \W, \F)$. 
Thus, $\Phi$ \eqref{ef} belongs to the cohomology class $H^2_{ex}(V, \W, \F)$.   

Let us prove the inverse statment. 
  For an element $\Phi \in C^{2}_{ex}(V, \W, \F)$ which is a representative 
of $H^{2}_{ex}(V, \W, \F)$, 
 according to Definition \ref{composabilitydef} of composibility, 
 it follows that for any $v_{1}$, $v_{2}\in V$, 
there exists $N^{2}_{0}(v_1, 0)$ such that  
for $w'\in W'$, 
\[
G_2(c_1(p_1), c_2(p_2))=\langle w', \Phi(v_{1}, z_1; v_{2}, z_2)\rangle,  
\]
is a rational $\W_{z_1, z_2}$-valued form with the only possible pole at
$z_1=z_2$ of order less than or equal to $N^2_0(v_1, v_2)$. 
For $v_{1}$, $v_{2}\in V$, 
let us define $\omega_\Psi(v_{1}, \zeta)v_{2}\in \W((\zeta))$   
such that 
\[
 \langle w', \omega_\Psi(v_{1}, \zeta)v_{2}\rangle|_{\zeta=z}
=\langle w', \Phi(v_{1}, z; v_{2}, z_2)\rangle,  
\]
for $z\in \C^{\times}$. 
For $v_{1}$, $v_{2}\in V$, we can define 
$Y_{Z}(v_{1}, \zeta)v_{2}$ using (\ref{Y_V+W}). 
Thus, we obtain a 
vertex operator map $Y_{Z}$, and 
$Z$ is endowed with the structure of a grading-restricted 
vertex algebra. 
Finally, we have   
\begin{corollary}
Two elements of $\ker \delta_{ex}^{2}$ 
differ by an element $\delta^{1}_2 C^{1}_2 (V, \W, \F)$ 
 if and only if  
the corresponding square-zero extensions of $V$ by $W$ are
equivalent.
\end{corollary}
\end{proof}
\section{Characterization of leaves and transversal sections of foliations on complex curves}   
\label{characterization}
In this section we consider a general formulation of characterizasion of $\mathcal M/\F$ in codimension one case 
by means of rational functions of invariants. 
Let us introduce further notations, for $n \ge 0$, 
\[
{\bf x}=(x_1, \ldots, x_n), 
\]
for $n$ vertex algebra element, formal parameters, points, etc. 
Introduce 
\begin{definition}
\label{torboid}
For an element $\Phi({\bf v}, {\bf c(p)}) \in \W_{c_1(p_1), \ldots, c_n(p_n)}$ let us call  
 $n$-variable rational function valued form 
\begin{equation}
\label{torba}
\mathcal R({\bf z})= \langle w', \Phi({\bf v}, {\bf c(p)}) \rangle, 
\end{equation}
the characteristic form.    
\end{definition}
We have used this form for the construction of chain complexes in Section \ref{coboundary}. 
In certain cases, depending on properties of $F({\bf v}, {\bf z})$, one is able to compute this matrix element 
explicitely.
By varying vertex algebra elements $v_i$, one can vary the the form of 
dependence of $\Phi({\bf v}, {\bf c(p)})$ on ${\bf v}$, 
and, therefore, obtain various functions of $R({\bf z})$.  
 By using the freedom of choice of $v\in V$, we could try to find 
 a suitable pattern for of $\Phi({\bf v}, {\bf c(p)})$ (as a functional of $v$), 
 in such a way \eqref{torba} would result to a specific differential form. 
Since $\Phi({\bf v}, {\bf c(p)})$ belongs to $C^n_m(V, \W, \F)$ for some $n$, $m$, 
it is important to mention that,  due to our formulation in terms of matrix elements, 
 \eqref{torba}, associated to cohomological invariants are supposed to be absolutely convergent 
in suitable domains of $\mathcal M/\F$. 
Depending on analytical properties with respect to local coordinates on $c(p)$ 
 of \eqref{torba} one can use it in order to characterize or distinguish particular 
leaves and transversal sections on $\mathcal M/\F$.  
For that purpose one can also integrate \eqref{torba} along (closed) paths either on a leaf of $\mathcal M/\F$ or on 
a transversal section of $\U$.
For that purpose we introduce 
\begin{definition}
We call a multiple integral 
\begin{equation}
\label{integral}
F({\bf z'})=\int\limits_{(p_1)}^{(p_2)} \mathcal R\left({\bf c(p)} \right),   
\end{equation}
the characteristic function for $\mathcal M/\F$, where $(p_i)$, $i=1, 2$ denote limiting points of integration.   
\end{definition}
The idea of integration of $\mathcal R\left({\bf c(p)}\right)$ goes back to \cite{Thur}. 
In Proposition \ref{nezc} we proved, in particular,  that elements of 
spaces $C^n_m(\V, \W, \F) \in \W_{z_1, \ldots, z_n}$ 
are invariant with respect to changes of formal parameters $(z_1, \ldots, z_n)$.
In Definition \ref{torboid} of a characteristic form we use such elements, and, therefore, \eqref{torba}, 
containing $\wt (v_i)$, $1 \le i \le n$,  
of corresponding differentials, is also invariant with respect 
to action of $\left({\rm Aut}\; \Oo^{(1)}\right)^{\times n}$.
Below we enumerate $\W_{z_1, \ldots, z_n}$-elements suitable for such characterization. 
\subsection{Composibilty condition}
Let us start with forms associated to the composibility conditions. 
 For  
 $l_{1}, \dots, l_{n}\in \Z_+$ such that $l_{1}+\cdots +l_{n}=n+m$, 
define ${k_1}={l_{1}+\cdots +l_{i-1}+1}$, ..., ${k_i}={l_{1}+\cdots +l_{i-1}+l_{i}}$. 
Consider a set of $p_{k_1}, \ldots, p_{k_n}$ with local coordinates $c_{k_1}(p_{k_1}), \ldots, c_{k_n}(p_{k_n})$, 
on $\mathcal M$ for  
 points on $\mathcal M/\F$. 
Then, for 
$v_{1}, \dots, v_{n+m}\in V$ and $w'\in W'$, 
one defines  \eqref{psii} 
and there exist
  positive integers $N^n_m(v_{i}, v_{j})$
depending only on $v_{i}$ and $v_{j}$ for $i, j=1, \dots, k$, $i\ne j$ such that the series \eqref{Inm} 
 is absolutely convergent  when for $l_p=l_{1}+\cdots +l_{i-1}+p$, $l_q=l_{1}+\cdots +l_{j-1}+q$, 
\begin{equation}
\label{condi1}
|c_{l_p}(p_{l_p})-\zeta_{i}| 
+ |c_{l_q}(p_{l_q})-\zeta_{i}|< |\zeta_{i}
-\zeta_{j}|, 
\end{equation}
for $i,j=1, \dots, k$, $i\ne j$ and for $p=1, 
\dots,  l_i$ and $q=1, \dots, l_j$. 
Note that in \eqref{Inm} the original variables $z_i$ are present in combinations \eqref{psii} only, 
and the conditions 
on domains of convergence are express through such combinations 
$c_{l_p}(p_{l_p})$ and $c_{l_q}(p_{l_q})$, and some $\zeta_i$ which could be identified with 
other local coordinates on $\mathcal M$ for $\mathcal M/\F$. 
Thus we obtain an external (with respect to original coordinates) condition on $\mathcal I^n_m(\Phi)$.  
Geometrically this means that the sum of shifts in domains of convergency  
 with respect to $c_{l_p}(p_{l_p})$ and $c_{l_q}(p_{l_q})$ 
are smaller than difference for other two points with local coordinates $\zeta_i$ and $\zeta_j$. 
It is also assumed that 
the sum must be analytically extended to a
rational function
in $(c_{1}(p_1), \dots, c_{m+n}(p_{m+n}))$,
 independent of $(\zeta_{1}, \dots, \zeta_{n})$,  
with the only possible poles at 
$c_{i}(p_i)=c_{j}(p_j)$, of order less than or equal to 
$N^n_m(v_{i}, v_{j})$, for $i,j=1, \dots, k$,  $i\ne j$. 

Consider the second condition in Definition \ref{composabilitydef}. 
For $v_{1}, \dots, v_{m+n}\in V$, there exist 
positive integers $N^n_m(v_{i}, v_{j})$, depending only on $v_{i}$ and 
$v_{j}$, for $i, j=1, \dots, k$, $i\ne j$, such that for $w'\in W'$, 
 such that \eqref{Jnm}
 is absolutely convergent when $z_{i}\ne z_{j}$, $i\ne j$
\begin{equation}
\label{condi2}
|c_{i}(p_i)|>|c_{k}(p_k)|>0, 
\end{equation}
 for $i=1, \dots, m$, and 
$k=m+1, \dots, m+n$, and the sum can be analytically extended to a
rational function 
in $(z_{1}, \dots, z_{m+n})$ with the only possible poles at 
$z_{i}=z_{j}$, of orders less than or equal to 
$N^n_m(v_{i}, v_{j})$, for $i, j=1, \dots, k$, $i\ne j$,. 
Elements $\Phi$ of spaces $C^n_m(V, \W, \F)$ \eqref{ourbicomplex} are composable with $m$ vertex operators, and, 
therefore possess properies described above.  
Due to absulute convergence in the regions \eqref{condi1} and \eqref{condi2} on $\mathcal M/\F$, forms $I^n_m(\Phi)$ and  
$J^n_m(\Phi)$ 
locally characterize $\mathcal M/\F$. 
\subsection{The cohomological class $\left[ \delta^1_m \Phi \right]$ of $H^1_m(V, \W, \F)$} 
\label{bordo}
In Section \ref{spaces} we have proved that for $\Phi \in C^1_m (V, \W, \F)$, the invariant $\delta^1_m \Phi$
vanishes if and only if $\Phi$ is a one fixed point holomorphic connection. 
Here, since $\Phi \in C^1_{m} (V, \W, \F)$ and $\delta^1_m \Phi \in C^2_{m-1} (V, \W, \F)$, we have 
the  characteristic two-form for $\mathcal M/\F$
\begin{equation}
\label{twoform}
\mathcal R(c(p), c(p')) = 
\langle w', \left( \delta^1_m \Phi \right) \left(v, c(p); v', c(p') \right) \rangle.    
\end{equation}
%
In Section \ref{cohomological} we proved that $\delta^1_m \Phi$ represents a cohomological class of $H^1_m(V, \W, \F)$. 
For a characterization of leaves of $\mathcal M/\F$ we may choose instead elements 
\[
\Phi_g=g(v, 0) \in \W, 
\]
which do not depend on $z$, and, hence, then matrix elements become computable. 
For non-vanishing invariants \eqref{tsfat} (i.e., not two
 point connection valued $G(\Phi)$) we obtain 
 the non-vanishing form 
\begin{eqnarray}
\label{oneform}
\mathcal R(c(p))&=& 
\langle w', \delta^1_m \Phi_{g}\left(v, c(p) \right) \rangle  
\nn
&=& 
\langle w',   \omega_W \left(u, c(p) \right)\; g(v, 0) 
+ e^{zL_{W}(-1)} \omega_W \left(v, -c(p) \right) \; g(u, 0) 
\nn
&& \qquad \qquad \qquad \qquad \qquad \qquad \qquad \qquad 
- 
g( \omega_V(u, c(p) ) v, 0)  
  \rangle.    
\end{eqnarray}
The form of the dependence of $\Phi$ or $g(v, z)$ on $v\in V$ determines the result of 
taking the matrix element in \eqref{oneform}. 
In order to compute \eqref{oneform} we use the properties of the graiding-restricted vertex algebra $V$, 
 in particular, expand $\omega(v, c(p))$  
as in \eqref{vop}, and act on $g(v, 0)$.  
Recall that by construction of Section \ref{spaces}, $c(p)$ can be associated to either a local coordinate $l(p)$ of $p$ 
 on $\mathcal M$  
considered on a leaf of $\mathcal M/\F$ or a local coordinate $t(p)$ on a transversal section $U \in \U$. 
\subsection{The cohomological class $\left[\Phi \right]$ of $H^2_{ex}(V, \W, \F)$}
Recall definitions of the forms $G_1$ \eqref{pervayaforma} and $G_2$ \eqref{vtorayaforma} from Section \ref{coboundary}. 
We define the following characteristic functions as triple integrals associated to the these forms: 
\begin{equation}
\label{pervfu}
F(c(p), c(p'), c(p'')) =  \int\limits_{(q_1, q'_1, q''_1) }^{(q_2, q'_2, q''_2) } 
 G_i \left(  v, c(p); v', c(p'); v'', c(p'')\right), 
\end{equation}
with $i=1$, $2$. 
By assumption contatining in Subsection \ref{comtrsec}, the forms \eqref{pervayaforma} and  \eqref{vtorayaforma} have 
nice convergency properties. Moreover, they contain only parts of the connection (functions do not vanish), 
and can be used in order to describe leaves or sections of $\mathcal M/\F$.  
For the invariant related to the second cohomology $H^2_{ex}(V, \W, \F)$, 
we obtain for \eqref{ef}
\begin{equation}
\label{threeform}
F(c(p), c(p'), c(p'')) = 
\langle w', \Phi  \left(v, c(p); v', c(p'); v'', c(p'') \right) \rangle. 
\end{equation}
In addition to \eqref{threeform}, 
one uses the particular form of forms $G_i$, $i=1$, $2$ 
\begin{eqnarray*}
G_1(p_1, p_2, p_3) &=& \langle w', \omega_\Psi(v_{1}, c_{1}(p_1))\; 
\omega_{V} \left(v_{2}, c_{2}(p_2) \right)\; \omega_{V}(v_{3}, c_{3}(p_3)) \one_V \rangle
\nn
&& \quad + \langle w', \omega_{W}(v_{1}, c_{1}(p_1)) \; 
\omega_\Psi(v_{2}, c_{2}(p_2))\; \omega_{V}(v_{3}, c_{3}(p_3))\one_V\rangle,  
\end{eqnarray*} 
and 
\begin{eqnarray*}
G_2(p_1, p_2, p_3)  &=& \langle w', \omega_\Psi(\omega_{V}(v_{1}, c_{1}(p_1) - c_{2}(p_2))v_{2},  
c_{2}(p_2))\; \omega_{V}(v_{3}, c_{3}(p_3))\one_V\rangle  
\nn
&&\quad 
+\langle w', \omega_{WV}^{W}(\omega_\Psi(v_{1}, c_{1}(p_1) -c_{2}(p_2))v_{2}, 
c_{2}(p_2))\; \omega_{V}(v_{3}, c_{3}(p_3))\one_V\rangle,  
\end{eqnarray*}
\eqref{pervayaforma} and \eqref{vtorayaforma} in \eqref{pervfu} (cf. Subsection \ref{comtrsec}). 
In particular, these invariants allow to show the transversality of cycles for 
foliations defined by the vanishing real part
\[
{\rm Re} \; \Omega=0,  
\]
 of one form $\Omega$ on compact Riemann surface \cite{DNF, N0, N1, N2},  
in the hyperelliptic case, 
\[
w^2=P_{2g+2}(z)=\prod_j (z-z_j), \quad z_j\neq z_l, 
\]
where $P_{2n+1}(z)$ is a polynomial.  
It would be interesting to find a way how to distinguish non-differomorphic Reeb components for 
foliations of the torus \cite{Lawson, Thur, BGG}. These questions will be addressed in elsewhere. 

The crucial question is how one could distinguish (locally and globally)
 leaves and transversal sections of a foliation. 
In particular, we should be able to describe singular points (such as, e.g., saddle points for 
foliations on Riemann surfaces), one-point leaves, transversal cycles, non-diffeomorphic, 
 compact and non-compact leaves.   
In our construction, for $\Phi \in C^n_m(V, \W, \F)$, $n$ and $m$ can be associated to 
either corresponding number of points on leaves and transversal sections. 
 $\Phi\in \W_{z_1, \ldots, z_n}$ is associated to $\mathcal R$ which is supposed to 
be a rational form with poles at $z_i=z_j$, $i\ne j$ only. 
Thus the general principle is the following. By associating $z_i$ to $c_i(p)$ on a leaf or section, 
 and computing \eqref{torba}, we study its analytic behavior.  
If \eqref{torba} has poles then they could be related 
to singular points of $\mathcal M/\F$.  
Next, for \eqref{Inm}, \eqref{Jnm}, \eqref{pervayaforma}, and \eqref{vtorayaforma},
 for $z_i=c_i(p_i)$, we determine the domains of convergency. 
When such a domain is limited to one point, then $\mathcal M/\F$ might have a one point leaf. 
Finally, consider $\delta^0_1 \Phi$,
for $\Phi \in C^0_1(V, \W, \F)$, and identify $z$ to $c(w)$, where $c(w)$ is a local coordinate 
on a leaf or section. Then $\mathcal R(z)=\langle w', \delta^0_1 \Phi \rangle$ considered on the whole leaf 
may distinguish if it is compact or non-compact.  
Note that for the same $\Phi$ we may consider $c_i(p_i)$, $1 \le i \le n$ either on a leaf or section, 
i.e., in transversal directions on $\mathcal M/\F$. Thus, in case of saddle points, we have different values of, e.g.,  
integrals
\eqref{integral} in these directions. 
For cycles on a curve we determine 
if they are transversal to leaves of foliation by using the above considerations.
\section{Further directions}
\label{further}
There exist a few approches to definition and computation of cohomologies of vertex operator algebras. 
\cite{Huang, Li}.  
Taking into account the above definitions and construction, 
we aim to consideration of a characteristic classes theory for arbitrary codimension regular and singular 
 foliations vertex operator algebras. 
In this paper, we consider foliations of codimension one. Arbitrary codimension case will be considered elsewhere. 
Losik defines a smooth structure on the leaf space
$M/\mathcal{F}$ of a foliation $\mathfrak{F}$ of codimension $n$ on
a smooth manifold $\mathcal M$ that allows to apply to $M/\mathcal{F}$ the
same techniques as to smooth manifolds.
In \cite{Losik} characteristic classes for a
foliation as  elements of the cohomology of certain bundles over the
leaf space $M/\mathcal{F}$ are defined.  
It would be interesting also to develope intrinsic (i.e., purely coordinate independent) theory of a smooth manifold 
foliation cohomology involving vertex algebra bundles \cite{BZF}.
Similar to Losik's theory, we use bundles correlation functions) over a foliated space. 
The idea of studies of cohomology of certain bundles on a smooth manifold $\mathcal M$ and making connection to 
a cohomology of $\mathcal M$ has first appeared in \cite{BS}. 
 This can have a relation with Losik's work \cite{Losik} proposing a new framework for singular spaces and 
 characteristic classes. 
In applications, one would be interested in applying techniques of this paper to case of higher-dimensional 
manifolds of codimension one \cite{BG, BGG}. In particular, the question of higher non-vanishing invariants, as 
well as the problem of distinguishing of compact and non-compact leaves for the Reeb foliation of the full torus, 
are also of high importance. 
It would be important to establish connection to chiral de-Rham complex on a smooth manifold introduced in \cite{MSV}. 
After a modification, one is able to introduce a vertex algebra cohomology of smooth manifolds on a similar basis as in 
this paper.  

One can mention a possibility to derive differential equations \cite{Huang0} 
for characters on 
separate leaves of foliation. Such equations are derived for various genuses and can be used in frames 
of Vinogradov theory \cite{vinogradov}. 
The structure of foliation (in our sense) can be also studied from the automorphic function theory point of view. 
Since on separate leaves one proves automorphic properties of characters, on can think about "global" automorphic 
properties for the whole foliation. 
\section{Applications in Mathematical Physics}
\label{applications} 
\subsection{Applications in conformal field theory and moduli spaces}
The problem of classification of leaves of codimension one foliations 
finds its applications in mechanics \cite{N0, N1, N2}. 
The general considerations of this paper are also useful even apart from problems of 
classification of leaves of codimension one foliations.     
Reduction formulas directly related to connections in this paper 
are used in 
computations of correlation functions on compact Riemann surfaces in 
Conformal Field theory \cite{Knizhka, Zhu, BZF}.  
Starting from an $n$-point functions we are able to relate it to corresponding 
the partition function via the recurrsion procedure. 
Studies of the space of connections defined for correlation functions of a 
particular vertex algebra brings about invariants of foliations which 
represent invariants for corresponding Conformal Field Theory \cite{S1, S3}. 
This gives an important link between cohomology of foliations with 
vertex algebra cohomology 
 and the geometry of foliations in general. 
\subsection{Deformations of Lie algebras.}
The theory of connections on the space of correlation function characterising leaves of a foliation
find its relation to the deformations of Lie algebras. 
Such deformations are used in various applications in Mathematical Physics. 
 It is well known that the Lie algebra cohomology 
with values in the adjoint representation of a Lie algebra
 answers questions about deformations of such Lie algebra as an algebraic
object. 
There arise natural questions of this type for  
 bi-graded differential Lie algebras resulting from chain complex constructions.   
Bi-graded differential algebras results also from constructions of 
double complexes for continual Lie algebra that are used in exactly solvable models \cite{RS}. 
Using characterization of connection leaves, we will study rigidity of  
  bi-graded differential Lie algebras resulting from chain complex constructions. 
For a 
compact Riemann surface $\Sigma^{(g)}$
of genus $g\geq 2$, we expect to find a relations for cohomologies in terms of 
elements of Fr\'echet spaces 
 given by the polynomials on $T_{\Sigma^{(g)}}{\mathcal N}(g,0)$.
  It's the space of formal power series on $T_{\Sigma^{(g)}}{\mathcal N}(g,0)^*$. 
This could be interpreted as a relation between cohomology with
adjoint coefficients of a Lie algebra $\mathfrak{g}$, 
i.e., graded differential 
deformations of global sections of $\mathfrak{g}$, and deformations
of the underlying manifold.
The choice of the coefficients in the Lie algebra cohomology 
determines a geometric object on the moduli space in a formal
neighborhood of a point. 
 \subsection{Developments}
The recursion procedure \cite{Zhu} 
 plays a fundamental role in the theory of correlation functions for 
 vertex operator algebras. It provides relations between $n$- and $n-1$-point correlation functions. 
In our foliation picture, the recursion procedure brings about relations among leaves of foliation. 
We work in the  
approach of formulation and computation of cohomologies of vertex operator algebras.   
Taking into account the above definitions and construction, 
we would like to develop a theory of 
characteristic classes for vertex operator algebras.  
This can have a relation with Losik's work \cite{LosikArxiv} 
proposing a new framework for singular spaces and 
new kind of characteristic classes.  
Relations to \cite{BGG} on Reeb foliations modified Godbillon-Vey class and can be also considered. 
On the other hand, we would like to apply methodology of vertex algebras in order to 
complete Losik's theory of characteristic classes \cite{LosikArxiv}.  
One can mention a possibility to derive differential equations  
for correlation functions on 
separate leaves of foliation. Such equations are derived for various genera and can be used in frames 
of Vinogradov theory \cite{vinogradov}. 
If we consider 
 multipoint correlation function for vertex algebras on Riemann surfaces of arbitrary genus, 
and corresponding connections arrising from these correlation functions, then there 
exist also relations to 
Krichever-Novikov type algebras \cite{KN1, KN2, KN2, Schl0, Schl1, Schl2 }. These 
  are generalizations of the Witt, Virasoro, affine Lie algebras.  
In particular, one is able to use the structure of Krichever-Novikov type algebras (as higher-genus generalizations of 
algebras related to vertex algebras) to study foliated spaces introduced in this paper.
The construction of $n$-point functions on genus $g$ sewn Riemann surfaces 
can be used for introduction and computation of cohomology of Krichever-Novikov type algebras in appropriate setup. 
\section*{Acknowledgements}
The author would like to thank A. Galaev, H. V. L\^e, A. Lytchak, P. Somberg, 
and P. Zusmanovich for related discussions. 
Research of the author was supported by the GACR project 18-00496S and RVO: 67985840. 
\section{Appendix: Grading-restricted vertex algebras and their modules}
\label{grading}
In this section, following \cite{Huang} we recall basic properties of 
grading-restricted vertex algebras and their grading-restricted generalized 
modules, useful for our purposes in later sections. 
We work over the base field $\C$ of complex numbers. 

\begin{definition}
A vertex algebra   
$(V,Y_V,\mathbf{1}_V)$, (cf. \cite{K}),  consists of a $\Z$-graded complex vector space  
\[
V = \coprod_{n\in\Z}\,V_{(n)}, \quad \dim V_{(n)}<\infty, 
\]
 for each $n\in \Z$,   
and linear map 
\[
Y_V:V\rightarrow {\rm End \;}(V)[[z,z^{-1}]], 
\]
 for a formal parameter $z$ and a 
distinguished vector $\mathbf{1}_V\in V$.   
The evaluation of $Y_V$ on $v\in V$ is the vertex operator
\begin{equation}
\label{vop}
Y_V(v)\equiv Y_V(v,z) = \sum_{n\in\Z}v(n)z^{-n-1}, 
\end{equation}
with components $(Y_{V}(v))_{n}=v(n)\in {\rm End \;}(V)$, where $Y_V(v,z)\mathbf{1}_V = v+O(z)$.
\end{definition}
\begin{definition}
\label{grares}
A grading-restricted vertex algebra satisfies 
the following conditions:
\begin{enumerate}
\item {Grading-restriction condition}:
$V_{(n)}$ is finite dimensional for all $n\in \Z$, and $V_{(n)}=0$ for $n\ll 0$; 

\item { Lower-truncation condition}:
For $u$, $v\in V$, $Y_{V}(u, z)v$ contains only finitely many negative 
power terms, that is, 
\[
Y_{V}(u, z)v\in V((z)), 
\] 
(the space of formal 
Laurent series in $z$ with coefficients in $V$);   

\item { Identity property}: 
Let ${\rm Id}_{V}$ be the identity operator on $V$. Then
\[
Y_{V}(\mathbf{1}_V, z)={\rm Id}_{V};  
\]

\item { Creation property}: For $u\in V$, 
\[
Y_{V}(u, z)\mathbf{1}_V\in V[[z]],
\]  
and 
\[
\lim_{z\to 0}Y_{V}(u, z)\mathbf{1}_V= u;
\]

\item { Duality}: 
For $u_{1}, u_{2}, v\in V$, 
\[
v'\in V'=\coprod_{n\in \mathbb{Z}}V_{(n)}^{*}, 
\]
where 
 $V_{(n)}^{*}$ denotes
the dual vector space to $V_{(n)}$ and $\langle\, . ,  .\rangle$ the evaluation 
pairing $V'\otimes V\to \C$, the series 
\begin{eqnarray}
\label{porosyata}
& & \langle v', Y_{V}(u_{1}, z_{1})Y_{V}(u_{2}, z_{2})v\rangle,
\\
& & \langle v', Y_{V}(u_{2}, z_{2})Y_{V}(u_{1}, z_{1})v\rangle, 
\\
& & \langle v', Y_{V}(Y_{V}(u_{1}, z_{1}-z_{2})u_{2}, z_{2})v\rangle, 
\end{eqnarray}
are absolutely convergent
in the regions 
\[
|z_{1}|>|z_{2}|>0,
\]
\[ 
|z_{2}|>|z_{1}|>0,
\]
\[
|z_{2}|>|z_{1}-z_{2}|>0,
\]
 respectively, to a common rational function 
in $z_{1}$ and $z_{2}$ with the only possible poles at $z_{1}=0=z_{2}$ and 
$z_{1}=z_{2}$; 

\item { $L_V(0)$-bracket formula}: Let $L_{V}(0): V\to V$,  
be defined by 
\[
L_{V}(0)v=nv, \qquad n=\wt(v),  
\]
 for $v\in V_{(n)}$.  
Then
\[
[L_{V}(0), Y_{V}(v, z)]=Y_{V}(L_{V}(0)v, z)+z\frac{d}{dz}Y_{V}(v, z), 
\]
for $v\in V$. 

\item { $L_V(-1)$-derivative property}:  
Let 
\[
L_{V}(-1): V\to V, 
\]
 be the operator given by 
\[
L_{V}(-1)v=\res_{z}z^{-2}Y_{V}(v, z)\one_V=Y_{(-2)}(v) \one_V,  
\]
for $v\in V$. Then for $v\in V$, 
\begin{equation}
\label{derprop}
\frac{d}{dz}Y_{V}(u, z)=Y_{V}(L_{V}(-1)u, z)=[L_{V}(-1), Y_{V}(u, z)].
\end{equation}
\end{enumerate}
\end{definition}
In addition to that,  we recall here the following definition (cf. \cite{BZF}): 
\begin{definition}
 A grading-restricted vertex algebra $V$ is called conformal of central 
charge $c \in \C$,
 if there exists a non-zero conformal vector (Virasoro vector) $\omega \in V_{(2)}$ such that the
corresponding vertex operator 
\[
Y_V(\omega, z)=\sum_{n\in\Z}L_V(n)z^{-n-2}, 
\]
is determined by modes of Virasoro algebra $L_V(n): V\to V$ satisfying 
\[
[L_V(m), L_V(n)]=(m-n)L(m+n)+\frac{c}{12}(m^{3}-m)\delta_{m+b, 0}\; {\rm Id_V}. 
\]
\end{definition}
\begin{definition}
\label{primary}  
A vector $A$ which belongs to a module $W$ of a quasi-conformal 
 grading-restricted vertex algebra $V$ is called 
primary of conformal dimension $\Delta(A) \in  \mathbb Z_+$ if  
\begin{eqnarray*}
L_W(k) A &=& 0,\;  k > 0, 
\nn
 L_W(0) A &=& \Delta(A) A. 
\end{eqnarray*}
\end{definition}
\begin{definition}
A {grading-restricted generalized $V$-module} is a vector space 
$W$ equipped with a vertex operator map 
\begin{eqnarray*}
Y_{W}: V\otimes W&\to& W[[z, z^{-1}]],
\nn
u\otimes w&\mapsto & Y_{W}(u, w)\equiv Y_{W}(u, z)w=\sum_{n\in \Z}(Y_{W})_{n}(u,w)z^{-n-1}, 
\end{eqnarray*}
and linear operators $L_{W}(0)$ and $L_{W}(-1)$ on $W$ satisfying the following
conditions:
\begin{enumerate}
\item {Grading-restriction condition}:
The vector space $W$ is $\mathbb C$-graded, that is, 
\[
W=\coprod_{\alpha\in \mathbb{C}}W_{(\alpha)},
\]
 such that 
$W_{(\alpha)}=0$ when the real part of $\alpha$ is sufficiently negative; 

\item { Lower-truncation condition}:
For $u\in V$ and $w\in W$, $Y_{W}(u, z)w$ contains only finitely many negative 
power terms, that is, $Y_{W}(u, z)w\in W((z))$; 

\item { Identity property}: 
Let ${\rm Id}_{W}$ be the identity operator on $W$. 
Then 
\[
Y_{W}(\mathbf{1}_V, z)={\rm Id}_{W}; 
\] 

\item { Duality}: For $u_{1}, u_{2}\in V$, $w\in W$, 
\[
w'\in W'=\coprod_{n\in \mathbb{Z}}W_{(n)}^{*}, 
\]
 $W'$ denotes 
the dual $V$-module to $W$ and $\langle\, .,. \rangle$ their evaluation 
pairing, the series 
\begin{eqnarray*}
&& \langle w', Y_{W}(u_{1}, z_{1})Y_{W}(u_{2}, z_{2})w\rangle,
\\ 
&& \langle w', Y_{W}(u_{2}, z_{2})Y_{W}(u_{1}, z_{1})w\rangle, 
\\
&& \langle w', Y_{W}(Y_{V}(u_{1}, z_{1}-z_{2})u_{2}, z_{2})w\rangle, 
\end{eqnarray*}
are absolutely convergent
in the regions
\[ 
|z_{1}|>|z_{2}|>0,
\]
\[ 
|z_{2}|>|z_{1}|>0, 
\]
\[
|z_{2}|>|z_{1}-z_{2}|>0,
\]
 respectively, to a common rational function 
in $z_{1}$ and $z_{2}$ with the only possible poles at $z_{1}=0=z_{2}$ and 
$z_{1}=z_{2}$. 
\item { $L_{W}(0)$-bracket formula}: For  $v\in V$,
\[
[L_{W}(0), Y_{W}(v, z)]=Y_{W}(L_V(0)v, z)+z\frac{d}{dz}Y_{W}(v, z); 
\]
\item { $L_W(0)$-grading property}: For $w\in W_{(\alpha)}$, there exists
$N \in \Z_{+}$ such that 
\begin{equation}
\label{gradprop}
(L_{W}(0)-\alpha)^{N}w=0;  
\end{equation}
\item { 
$L_W(-1)$-derivative property}: For $v\in V$,
\begin{equation}
\label{derprop}
\frac{d}{dz}Y_{W}(u, z)=Y_{W}(L_{V}(-1)u, z)=[L_{W}(-1), Y_{W}(u, z)].
\end{equation}
\end{enumerate}
\end{definition}
The translation property of vertex operators 
\begin{equation}
\label{transl}
 Y_{W}(u, z) = e^{-z' L_{W}(-1)} Y_{W}(u, z+z') e^{z' L_{W}(-1)}, 
\end{equation}
for $z' \in \C$, follows from from \eqref{derprop}.  
For $v\in V$, and $w \in W$, the intertwining operator 
\begin{eqnarray}
\label{interop}
&& Y_{WV}^{W}: V\to W,  
\nn
&&
v   \mapsto  Y_{WV}^{W}(w, z) v,    
\end{eqnarray}
 is defined by 
\begin{eqnarray}
\label{wprop}
Y_{WV}^{W}(w, z) v= e^{zL_W(-1)} Y_{W}(v, -z) w. 
\end{eqnarray}
We will also use the following property of intertwining operators \eqref{interop} \cite{Hu3}. 
For a function $f(u)$, $u\in V$, 
\[
f\left(Y_V(u, z)\one_V \right) = Y_{WV}^{W}(f(u), z)\one_V.   
\]

Let us recall some further facts from \cite{BZF} relating generators of Virasoro algebra with the group of 
automorphisms in complex dimension one. 
 Let us represent an element of ${\rm Aut} \; \Oo^{(1)}$ by the map  
\begin{equation}
\label{lempa}
z \mapsto \rho=\rho(z),
\end{equation}
given by the power series
\begin{equation}
\label{prostoryad}
\rho(z) = \sum\limits_{k \ge 1} a_k z^k, 
\end{equation}
%
$\rho(z)$ can be represented in an exponential form 
\begin{equation}
\label{rog}
f(z) = \exp \left(  \sum\limits_{k > -1} \beta_{k }\; z^{k+1} \partial_{z} \right) 
\left(\beta_0 \right)^{z \partial_z}.z, 
\end{equation}
where we express $\beta_k \in \mathbb C$, $k \ge 0$, through combinations of $a_k$, $k\ge 1$.  
 A representation of Virasoro algebra modes in terms of differenatial operators is given by \cite{K} 
\begin{equation}
\label{repro}
L_W(m) \mapsto - \zeta^{m+1}\partial_\zeta, 
\end{equation}
for $m \in \Z$. 
 By expanding \eqref{rog} and comparing to \eqref{prostoryad} we obtain a system of equations which, 
 can be solved recursively for all $\beta_{k}$.
In \cite{BZF}, $v \in V$, they derive the formula 
\begin{eqnarray}
\label{infaction}
&&
\left[L_W(n), Y_W (v, z) \right] 
=  \sum_{m \geq -1}  
 \frac{1}{(m+1)!} \left(\partial^{m+1}_{z} z^{m+1}  \right)\;  
Y_W (L_V(m) v, z),    
\end{eqnarray}
of a Virasoro generator commutation with a vertex operator. 
Given a vector field 
\begin{equation}
\label{top}
\beta(z)\partial_z= \sum_{n \geq -1} \beta_n z^{n+1} \partial_z, 
\end{equation}
which belongs to local Lie algebra of ${\rm Aut}\; \Oo^{(1)}$, 
 one introduces the operator 
\[
\beta = - \sum_{n \geq -1} \beta_n L_W(n). 
\]
We conlclude from \eqref{top} with the following 
\begin{lemma}
\begin{eqnarray}
\label{infaction000}
&&
\left[\beta, Y_W (v, z) \right] 
=  \sum_{m \geq -1}  
 \frac{1}{(m+1)!} \left(\partial^{m+1}_{z} \beta(z)  \right)\;  
Y_W (L_V(m) v, z).   
\end{eqnarray}
\end{lemma}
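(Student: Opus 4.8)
The plan is to derive \eqref{infaction000} as a linear combination of the identities \eqref{infaction}, one for each Virasoro mode occurring in $\beta$. Recall from the discussion around \eqref{top} that the vector field $\beta(z)\partial_z=\sum_{n\geq -1}\beta_n z^{n+1}\partial_z$ in the local Lie algebra of ${\rm Aut}\;\Oo^{(1)}$ corresponds to the operator $\beta=-\sum_{n\geq -1}\beta_n L_W(n)$, consistently with \eqref{repro}. First I would use bilinearity of the commutator to write $\left[\beta, Y_W(v,z)\right]=-\sum_{n\geq -1}\beta_n\left[L_W(n), Y_W(v,z)\right]$, and then substitute \eqref{infaction} for each bracket. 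This produces a double sum over $n\geq -1$ and $m\geq -1$ whose summand is proportional to $\beta_n\,\bigl(\partial_z^{m+1}z^{n+1}\bigr)\,Y_W(L_V(m)v,z)$, with the $z$-independent numerical factor $\frac{1}{(m+1)!}$.

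The key step is then to interchange the two summations and carry out the sum over $n$ first. Since $\partial_z^{m+1}$ acts only on the $z^{n+1}$ factor while $\beta_n$ is a constant, differentiating the defining series for $\beta(z)$ term by term gives $\sum_{n\geq -1}\beta_n\,\partial_z^{m+1}z^{n+1}=\partial_z^{m+1}\Bigl(\sum_{n\geq -1}\beta_n z^{n+1}\Bigr)=\partial_z^{m+1}\beta(z)$. Inserting this identity collapses the double sum to $\sum_{m\geq -1}\frac{1}{(m+1)!}\bigl(\partial_z^{m+1}\beta(z)\bigr)Y_W(L_V(m)v,z)$, which is precisely the right-hand side of \eqref{infaction000}; only the overall sign has to be reconciled with the convention $\beta=-\sum_n\beta_n L_W(n)$ and \eqref{repro}, which is a routine matter of bookkeeping.

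The main obstacle is justifying these manipulations. For a weight-homogeneous $v\in V_{(N)}$ the inner sum over $m$ is automatically finite: $L_V(m)v\in V_{(N-m)}$ and $V_{(k)}=0$ for $k\ll 0$, so only finitely many $m$ contribute, and the general case follows by linearity in $v$. The sum over $n$, on the other hand, runs over the whole local Lie algebra, so to interchange it with $\partial_z^{m+1}$ I would either restrict to the truncated subalgebra of polynomial vector fields, where $\beta$ is a finite sum and no issue arises, and then pass to a general $\beta$ by continuity of the action on $Y_W(v,z)$ applied to a fixed vector, or, equivalently, note that both sides of \eqref{infaction000} are linear in the coefficients $\beta_n$, so the statement reduces to the single-mode case $\beta(z)=z^{n+1}$, which is \eqref{infaction} itself. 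Either route establishes the lemma once \eqref{infaction} is available.
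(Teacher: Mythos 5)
Your proposal is correct and follows essentially the same route the paper intends: the paper offers no argument beyond "we conclude from \eqref{top}," i.e.\ the lemma is obtained exactly as you do, by writing $\beta=-\sum_{n\ge -1}\beta_n L_W(n)$, applying \eqref{infaction} mode by mode, and resumming the series for $\beta(z)$ under $\partial_z^{m+1}$ (noting, as you implicitly do, that the $z^{m+1}$ in \eqref{infaction} should read $z^{n+1}$). Your added care about the finiteness of the $m$-sum and the reduction to single modes $\beta(z)=z^{n+1}$ only makes explicit what the paper leaves unsaid.
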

The formula \eqref{infaction000}  is used in \cite{BZF} (Chapter 6) in order to prove invariance of 
vertex operators multiplied by conformal weight differentials in case of primary states, and 
in generic case.  

Let us give some further definition: 
\begin{definition}
\label{quasiconf}
A grading-restricted vertex algebra $V$-module $W$ is called quasi-conformal if 
  it carries an action of local Lie algebra of ${\rm Aut}\; \Oo$  
such that commutation formula
 \eqref{infaction000}    
 holds for any 
$v \in  V$, the element $L_W(-1) = - \partial_{z}$, 
as the translation operator $T$,
\[
L_W(0) = - z \partial_{z},  
\]
  acts semi-simply with integral
eigenvalues, and the Lie subalgebra of the positive part of local Lie algebra of ${\rm Aut}\; \Oo^{(n)}$
 acts locally nilpotently. 
\end{definition}
Recall \cite{BZF} the exponential form $f(\zeta)$ \eqref{rog} of the coordinate transformation \eqref{lempa} 
$\rho(z) \in {\rm Aut}\; \Oo^{(1)}$.
A quasi-conformal vertex algebra posseses the formula \eqref{infaction000}, thus 
it is possible 
by using the identification \eqref{repro}, to introduce the linear operator 
 representing $f(\zeta)$ \eqref{rog} on $\W_{z_1, \ldots, z_n}$,  
\begin{equation}
\label{poperator}
 P\left(f (\zeta) \right)= 
\exp \left( \sum\limits_{m >  0} (m+1)\; \beta_m \; L_V(m) \right) \beta_0^{L_W(0)},  
\end{equation}
 (note that we have a different normalization in it). 
In \cite{BZF} (Chapter 6) it was shown that  
the action of an operator similar to \eqref{poperator} 
on a vertex algebra element $v\in V_n$ contains finitely meny terms, and 
subspaces 
\[
V_{\le m} = \bigoplus_{ n \ge K}^m V_n, 
\]
 are stable under all operators $P(f)$, $f \in  {\rm Aut}\; \Oo^{(1)}$. 
 In \cite{BZF} they proved the following 
\begin{lemma}
 The assignment
\[
 f \mapsto P(f), 
\]
 defines a representation of ${\rm Aut}\; \Oo^{(1)}$
on $V$, 
\[
 P(f_1 * f_2) = P(f_1) \; P(f_2),  
\]
 which is the inductive limit of the
representations $V_{\le m}$, $m\ge K$.
\end{lemma}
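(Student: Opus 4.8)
The plan is to reduce the statement to a finite-dimensional assertion on each of the subspaces $V_{\le m}$ and then to integrate there the Lie algebra action encoded in \eqref{infaction000}. As recalled above, every $V_{\le m}$ is invariant under all operators $P(f)$, $f\in\mathrm{Aut}\,\Oo^{(1)}$, the sum defining $P(f)|_{V_{\le m}}$ being finite because $L_V(k)$ with $k>0$ strictly lowers the grading on the bounded-below space $V$; moreover these inclusions $V_{\le m}\hookrightarrow V_{\le m'}$ intertwine the $P(f)$. Hence $f\mapsto P(f)$ is a representation of $\mathrm{Aut}\,\Oo^{(1)}$ on $V=\varinjlim_m V_{\le m}$, equal to the inductive limit of the $V_{\le m}$, as soon as each restriction $f\mapsto P(f)|_{V_{\le m}}$ is a group homomorphism, so this is all I would have to prove. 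First I would observe that, since the coefficients $\beta_k$ in the presentation \eqref{rog} are obtained recursively and polynomially from the $a_k$ of \eqref{prostoryad} and only finitely many of them enter $P(f)|_{V_{\le m}}$, the map $P|_{V_{\le m}}$ factors as a morphism of varieties through a finite-dimensional quotient algebraic group $G_m=\mathrm{Aut}\,\Oo^{(1)}/K_m$, where $K_m$ is a suitable congruence subgroup $\{f:\ f(z)\equiv z \bmod z^{N(m)+1}\}$; the group $G_m$ is connected, being a semidirect product $\mathbb G_m\ltimes U_m$ of the image of the scalings $z\mapsto a_1 z$ by a unipotent subgroup $U_m$.

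Next I would check the homomorphism property separately on the two factors and on one-parameter subgroups. On the scaling subgroup, \eqref{poperator} gives $P(a_1 z)=a_1^{L_W(0)}$, which by Definition \ref{quasiconf} (semisimplicity of $L_W(0)$ with integral eigenvalues) is a bona fide representation of $\mathbb G_m$, so $P(a_1 z)P(b_1 z)=(a_1b_1)^{L_W(0)}$. For a one-parameter subgroup $f_t(z)=\exp\!\big(t\,z^{n+1}\partial_z\big).z$ of $U_m$, $n\ge 1$, the presentation \eqref{rog} has $\beta_n=t$, $\beta_0=1$ and all other coefficients zero, so \eqref{poperator} yields an operator of the form $\exp\!\big(c_n t\,L_V(n)\big)$, manifestly a one-parameter subgroup: $P(f_s\ast f_t)=P(f_{s+t})=P(f_s)P(f_t)$. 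The $L_V(0)$-bracket formula of Definition \ref{grares} gives $a^{L_W(0)}L_V(n)a^{-L_W(0)}=a^{-n}L_V(n)$, which is exactly how the scalings act on the vector fields $z^{n+1}\partial_z$, so $P$ respects the semidirect-product structure linking $\mathbb G_m$ and $U_m$.

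It then remains to promote the homomorphism property from one-parameter subgroups to all of $U_m$. Since $U_m$ is a finite-dimensional unipotent algebraic group, $\exp\colon\mathfrak u_m\to U_m$ is an isomorphism of varieties and the group law is the Baker--Campbell--Hausdorff series. Via the identification \eqref{repro} of $L_W(m)$ with $-\zeta^{m+1}\partial_\zeta$, each $f\in U_m$ is the time-one flow $\exp(X_f)$ of a vector field $X_f=\sum_{k\ge 1}\beta_k z^{k+1}\partial_z$, and, with the normalization fixed in \eqref{poperator}, $P(f)=\exp\psi(X_f)$ where $\psi$ carries $X_f$ to the corresponding combination of the $L_V(k)$. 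The commutator identity \eqref{infaction000}, together with the bracket relations among the $L_V(k)$ guaranteed by quasi-conformality, shows that $\psi\colon\mathfrak u_m\to\mathrm{End}(V_{\le m})$ is a homomorphism of Lie algebras; hence for $f=\exp X$ and $g=\exp Y$ one has $P(f\ast g)=P\big(\exp\mathrm{BCH}(X,Y)\big)=\exp\psi(\mathrm{BCH}(X,Y))=\exp\psi(X)\,\exp\psi(Y)=P(f)P(g)$, because $\psi$ intertwines the two Baker--Campbell--Hausdorff series. Combined with the torus computation and the compatibility above, this shows $P|_{V_{\le m}}\colon G_m\to GL(V_{\le m})$ is a group homomorphism, and passing to $\varinjlim_m$ proves the Lemma.

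The hardest part will be the normalization bookkeeping in the last step: one must pin down precisely which combination of the $L_V(k)$ the exponent of \eqref{poperator} represents and verify that, so read, the assignment $\psi$ is genuinely a Lie algebra homomorphism and that $P(\exp X)=\exp\psi(X)$ holds for an arbitrary $X\in\mathfrak u_m$, not merely for $X$ supported on a single mode. Here one must be careful, because the $\beta_k$ of \eqref{rog} are the coordinates of a chosen ordered exponential presentation of $f$ rather than additive parameters, so a careless computation risks circularity with the very identity $P(f_1\ast f_2)=P(f_1)P(f_2)$ one is establishing; the staged reduction --- first to the torus and to one-parameter subgroups, where multiplicativity is transparent, and only then to general elements of the connected group $G_m$ via Baker--Campbell--Hausdorff --- is exactly what removes the circularity.
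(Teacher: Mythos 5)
The paper does not actually prove this lemma --- it is quoted verbatim from \cite{BZF} (Chapter 6) with the words ``In \cite{BZF} they proved the following'' --- so there is no in-paper argument to compare against; your proof is essentially the standard one from that reference (exponentiate the integrable action of the Lie algebra $z^2\C[[z]]\partial_z\oplus\C z\partial_z$ through the finite-dimensional quotients ${\rm Aut}\;\Oo^{(1)}/K_m$, each a semidirect product of a torus by a unipotent group, and pass to the inductive limit over the stable subspaces $V_{\le m}$), and it is correct. One concrete warning about the ``normalization bookkeeping'' you rightly single out as the delicate point: with the factor $(m+1)$ literally as written in \eqref{poperator}, the assignment $z^{k+1}\partial_z\mapsto (k+1)L_V(k)$ is \emph{not} a Lie algebra homomorphism (one would need $(m+n+1)=-(m+1)(n+1)$ to match $[L_V(m),L_V(n)]=(m-n)L_V(m+n)$ against $[z^{m+1}\partial_z,z^{n+1}\partial_z]=(n-m)z^{m+n+1}\partial_z$), so your Baker--Campbell--Hausdorff step only closes for the unnormalized assignment $z^{k+1}\partial_z\mapsto -L(k)$ of \eqref{repro}; since the paper itself flags its formula as having ``a different normalization,'' your proof should be read as establishing the lemma for the \cite{BZF} operator, which is what the statement intends.
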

Similarly, \eqref{poperator} provides a representation operator on $\W_{z_1, \ldots, z_n}$. 
 
\section{Appendix: $\W$-valued rational functions} 
\label{valued}
 Recall the definition of shuffles. 
Let  $S_{q}$ be the permutation group. 
For $l \in \N$ and $1\le s \le l-1$, let $J_{l; s}$ be the set of elements of 
$S_{l}$ which preserve the order of the first $s$ numbers and the order of the last 
$l-s$ numbers, that is,
\[
J_{l, s}=\{\sigma\in S_{l}\;|\;\sigma(1)<\cdots <\sigma(s),\;
\sigma(s+1)<\cdots <\sigma(l)\}.
\]
The elements of $J_{l; s}$ are called shuffles, and we use the notation 
\[
J_{l; s}^{-1}=\{\sigma\;|\; \sigma\in J_{l; s}\}.
\]

 We define the configuration spaces: 
\[
F_{n}\C=\{(z_{1}, \dots, z_{n})\in \C^{n}\;|\; z_{i}\ne z_{j}, i\ne j\},
\] 
for $n\in \Z_{+}$.
Let $V$ be a grading-restricted 
vertex algebra, and $W$ a a grading-restricted generalized $V$-module. 
By $\overline{W}$ we denote the algebraic completion of $W$, 
\[
\overline{W}=\prod_{n\in \mathbb C}W_{(n)}=(W')^{*}.
\]
\begin{definition}
 A $\overline{W}$-valued rational function in $(z_{1}, \dots, z_{n})$ 
with the only possible poles at 
$z_{i}=z_{j}$, $i\ne j$, 
is a map 
\begin{eqnarray*}
 f:F_{n}\C &\to& \overline{W},   
\\
 (z_{1}, \dots, z_{n}) &\mapsto& f(z_{1}, \dots, z_{n}),   
\end{eqnarray*} 
such that for any $w'\in W'$,  
\[
R(z_1, \ldots, z_n)= \langle w', f(z_{1}, \dots, z_{n}) \rangle,
\] 
is a rational function in $(z_{1}, \dots, z_{n})$  
with the only possible poles  at 
$z_{i}=z_{j}$, $i\ne j$. 
In this paper, such a map is called $\overline{W}$-valued rational function 
in $(z_{1}, \dots, z_{n})$ with possible other poles.
 The space of $\overline{W}$-valued rational functions is denoted by $\overline{W}_{z_{1}, \dots, z_{n}}$.
\end{definition} 
One defines an action of $S_{n}$ on the space $\hom(V^{\otimes n}, 
\overline{W}_{z_{1}, \dots, z_{n}})$ of linear maps from 
$V^{\otimes n}$ to $\overline{W}_{z_{1}, \dots, z_{n}}$ by 
\[
\sigma(\Phi)(v_{1}, z_1; \cdots; v_{n}, z_n)  
=\Phi (v_{\sigma(1)}, v_{\sigma(1)};  \cdots v_{\sigma(n)}, z_{\sigma(n)}), 
\]  
for $\sigma\in S_{n}$, and $v_{1}, \dots, v_{n}\in V$.
We will use the notation $\sigma_{i_{1}, \dots, i_{n}}\in S_{n}$, to denote the 
the permutation given by $\sigma_{i_{1}, \dots, i_{n}}(j)=i_{j}$,  
for $j=1, \dots, n$.
In \cite{Huang} one finds:
\begin{proposition} 
The subspace of $\hom(V^{\otimes n}, 
\overline{W}_{z_{1}, \dots, z_{n}})$ consisting of linear maps
having
the $L(-1)$-derivative property, having the $L(0)$-conjugation property
or being composable with $m$ vertex operators is invariant under the 
action of $S_{n}$.
\end{proposition}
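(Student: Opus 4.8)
The plan is to reduce the statement to the invariance of each of the three subspaces under a single transposition and then handle the three properties in increasing order of difficulty. Since $S_{n}$ is generated by the adjacent transpositions $\tau_{i}=(i\;i+1)$, $1\le i\le n-1$, and since an action carries a $\tau_{i}$-invariant subspace into itself for every product of $\tau_{i}$'s, it suffices to show that each of the three subspaces of $\hom(V^{\otimes n},\overline{W}_{z_{1},\dots,z_{n}})$ is mapped into itself by each $\tau_{i}$.

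First I would dispose of the $L(-1)$-derivative property and the $L(0)$-conjugation property, which are the easy cases. Both are formulated slot-by-slot: the $L(-1)$-derivative property equates $\partial_{z_{j}}\langle w',\Phi(\dots)\rangle$ with the matrix element of $\Phi$ in which $v_{j}$ is replaced by $L_{V}(-1)v_{j}$, and the $L(0)$-conjugation property relates $\lambda^{L_{W}(0)}\Phi(v_{1},z_{1};\dots)$ to $\Phi(\lambda^{L_{V}(0)}v_{1},\lambda z_{1};\dots)$. Applying $\tau_{i}$ merely interchanges the pairs $(v_{i},z_{i})$ and $(v_{i+1},z_{i+1})$, hence also interchanges $\partial_{z_{i}}$ with $\partial_{z_{i+1}}$ while leaving the single-slot operators $L_{V}(-1)$ and $L_{V}(0)$ untouched; so the identity verified by $\tau_{i}(\Phi)$ is exactly the one verified by $\Phi$ with two variable names swapped. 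This step is pure relabeling.

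The substantive case is composability with $m$ vertex operators. Here one must check that both conditions of Definition \ref{composabilitydef} survive $\tau_{i}$: the one bounding the series $\mathcal I^{n}_{m}(\Phi)$ of \eqref{Inm}, built from the iterated products $\Xi_{i}$ of \eqref{psii}, and the one bounding $\mathcal J^{n}_{m}(\Phi)$ of \eqref{Jnm}. The key observation is that the pole locus $\{z_{i}=z_{j}:i\ne j\}$ and the regions of absolute convergence appearing in the definition — the inequalities \eqref{granizy1} in the $\mathcal I$-condition, and $|z_{i}|>|z_{k}|>0$ for $i\le m<k$ in the $\mathcal J$-condition — are all symmetric under permuting the $n$ arguments of $\Phi$ among themselves. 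Consequently, if $\Phi$ is composable with $m$ vertex operators with bounding integers $N^{n}_{m}(v_{i},v_{j})$, then $\tau_{i}(\Phi)$ is composable with the same $m$ vertex operators with bounding integers $N^{n}_{m}(v_{\tau_{i}(i)},v_{\tau_{i}(j)})$: the relevant series for $\tau_{i}(\Phi)$ are obtained from those for $\Phi$ by a relabeling of two variables, so that absolute convergence on the (correspondingly permuted) domain, the analytic continuation to a rational function, and the bound on pole orders all transfer. The only point that is not pure relabeling is that, when $\tau_{i}$ forces two of the $\omega_{V}$- or $\omega_{W}$-products to be written in the opposite order, one needs their matrix elements to continue to the same rational function; this is precisely the commutativity/duality axiom of grading-restricted vertex algebras and their modules (cf.\ Definitions \ref{grares} and the duality axiom for modules), which supplies the common analytic continuation.

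The main obstacle is thus the bookkeeping inside the $\mathcal I$-condition: one must track how the partition $l_{1}+\cdots+l_{n}=n+m$ and the auxiliary variables $\zeta_{1},\dots,\zeta_{n}$ are relabeled under $\tau_{i}$, and verify that the transformed inequalities \eqref{granizy1} still describe a domain of the same shape on which the permuted series converges absolutely. Once this is verified for a single $\tau_{i}$, invariance of each of the three subspaces under all of $S_{n}$ — and hence of their intersection, which is the subspace of maps that are composable with $m$ vertex operators and have both the $L(-1)$-derivative and $L(0)$-conjugation properties — follows immediately.
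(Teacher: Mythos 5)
The paper does not actually prove this proposition: it is quoted verbatim from \cite{Huang} with the attribution ``In \cite{Huang} one finds,'' so there is no in-paper argument to compare yours against. Judged on its own, your outline is correct and is essentially the standard verification (and essentially what Huang's proof amounts to): reduce to adjacent transpositions, observe that the $L(-1)$-derivative and $L(0)$-conjugation identities are either slot-local or fully symmetrized and hence stable under relabelling, and then check that both composability conditions of Definition \ref{composabilitydef} are quantified over \emph{all} partitions $l_{1}+\cdots+l_{n}=m+n$ and \emph{all} choices of $v_{1},\dots,v_{m+n}$, so that the series \eqref{Inm} and \eqref{Jnm} for $\sigma(\Phi)$ coincide with the corresponding series for $\Phi$ after permuting the partition and applying the induced block permutation to the inputs, with the convergence regions \eqref{granizy1} and $|z_{i}|>|z_{k}|>0$ transforming covariantly. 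Two small criticisms. First, your appeal to the commutativity/duality axiom is superfluous: permuting the slots of $\Phi$ permutes the blocks $\Psi_{i}$ of \eqref{psii} wholesale and never reverses the order of $\omega_{V}$-factors inside a block, nor does it touch the $\omega_{W}$-factors in \eqref{Jnm}, so no reordering of operator products ever has to be justified --- the universal quantification over inputs does all the work. Second, the bounding integers you write as $N^{n}_{m}(v_{\tau_{i}(i)},v_{\tau_{i}(j)})$ are indexed by pairs among the $m+n$ vertex-algebra elements, so the relevant permutation is the induced block permutation of $\{1,\dots,m+n\}$, not $\tau_{i}$ acting on the $n$ slots; this is only a notational slip, but it is exactly the bookkeeping you flag as the main obstacle, so it is worth stating precisely.
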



Let us introduce another definition: 
\begin{definition}
\label{wspace}
We define the space $\W_{z_1, \dots, z_n}$ of 
  $\overline{W}_{z_{1}, \dots, z_{n}}$-valued rational forms $\Phi$ 
with each vertex algebra element entry $v_i$, $1 \le i \le n$
of a quasi-conformal grading-restricted vertex algebra $V$ tensored with power $\wt(v_i)$-differential of 
corresponding formal parameter $z_i$, i.e., 
\begin{equation}
\label{bomba}
\Phi \left(dz_1^{\wt(v_1)} \otimes v_{1}, z_1; \cdots;
 dz_n^{\wt(v_n)} \otimes  v_{n}, z_n\right) \in \W_{z_1, \dots, z_n}. 
\end{equation}
We assume also that \eqref{bomba} 
satisfy $L_V(-1)$-derivative \eqref{lder1}, $L_V(0)$-conjugation 
\eqref{loconj} properties, and the symmetry property 
with respect to action of the symmetric group $S_n$: 
\begin{equation}
\label{shushu} 
\sum_{\sigma\in J_{l; s}^{-1}}(-1)^{|\sigma|}
 \left(\Phi(v_{\sigma(1)}, z_{\sigma(1)}; \cdots; v_{\sigma(l)},  z_{\sigma(1)}) \right)=0.
\end{equation}
\end{definition}
In Section \ref{spaces} we prove that \eqref{bomba} is invariant with respect to changes of formal parameters 
$(z_1, \dots, z_n)$. 
\section{Appendix: Properties of matrix elements for a grading-restricted vertex algebra}
\label{properties}
Let $V$ be a grading-restricted vertex algebra and $W$ a grading-restricted generalized $V$-module. 
Let us recall some definitions and facts about matrix elements for a grading-restricted vertex algebra \cite{Huang}. 
If a meromorphic function $f(z_{1}, \dots, z_{n})$ on a domain in $\mathbb C^{n}$ is 
 analytically extendable to a rational function in $z_{1}, \dots, z_{n}$, 
we denote this rational function  
by $R(f(z_{1}, \dots, z_{n}))$.
Let us recall a few definitions from \cite{Huang}
\begin{definition}
\label{}
For $n\in \Z_{+}$,  
a linear map 
\[
\Phi(v_{1}, z_{1};  \ldots ; v_{n},  z_{n})
= V^{\otimes n}\to 
\W_{z_{1}, \dots, z_{n}}, 
\]
 is said to have
the  $L(-1)$-derivative property if
\begin{equation}
\label{lder1}
(i) \qquad \langle w', \partial_{z_{i}} \Phi(v_{1}, z_{1};  \ldots ; v_{n},  z_{n})\rangle= 
\langle w',  \Phi(v_{1}, z_{1};  \ldots; L_{V}(-1)v_{i}, z_i; \ldots ; v_{n},  z_{n}) \rangle, 
\end{equation}
for $i=1, \dots, n$, $v_{1}, \dots, v_{n}\in V$, $w'\in W$, 
and  
\begin{eqnarray}
\label{lder2}
(ii) \qquad \sum\limits_{i=1}^n\partial_{z_{i}} \langle w', \Phi(v_{1}, z_{1};  \ldots ; v_{n},  z_{n})\rangle=
\langle w', L_{W}(-1).\Psi(v_{1}, z_{1};  \ldots ; v_{n},  z_{n}) \rangle, 
\end{eqnarray}
with some action $.$ of $L_{W}(-1)$  on $\Phi(v_{1}, z_{1};  \ldots ; v_{n},  z_{n})$, and 
and  $v_{1}, \dots, v_{n}\in V$. 
\end{definition}
Note that since $L_{W}(-1)$ is a weight-one operator on $W$, 
for any $z\in \C$,  $e^{zL_{W}(-1)}$ is a well-defined linear operator
on $\overline{W}$. 

In \cite{Huang} we find the following 
\begin{proposition}
Let $\Phi$ be a linear map having the $L(-1)$-derivative property. Then for $v_{1}, \dots, v_{n}\in V$,
$w'\in W'$, $(z_{1}, \dots, z_{n})\in F_{n}\C$, $z\in \C$ such that
$(z_{1}+z, \dots,  z_{n}+z)\in F_{n}\C$,
\begin{eqnarray}
\label{ldirdir}
 \langle w', e^{zL_{W}(-1)} 
\Phi \left(v_{1}, z_1; \ldots; v_{n}, z_{n} \right) \rangle 
%
=\langle w', 
\Phi(v_{1}, z_{1}+z ; \ldots; v_{n},  z_{n}+z) \rangle,  
\end{eqnarray}
and for $v_{1}, \dots, v_{n}\in V$,
$w'\in W'$, $(z_{1}, \dots, z_{n})\in F_{n}\C$, $z\in \C$,  and $1\le i\le n$ such that
\[
(z_{1}, \dots, z_{i-1}, z_{i}+z, z_{i+1}, \dots, z_{n})\in F_{n}\C,
\]
the power series expansion of 
\begin{equation}\label{expansion-fn}
\langle w', 
 \Phi(v_{1}, z_1;  \ldots; v_{i-1}, z_{i-1};  v_i, z_{i}+z; v_{v+1}, z_{i+1}; \ldots  v_{n}, z_n) \rangle, 
\end{equation}
in $z$ is equal to the power series
\begin{equation}\label{power-series}
\langle w', 
\Phi(v_{1} z_1; \ldots;  v_{i-1}, z_{i-1}; e^{zL(-1)}v_{i}, z_i;  
 v_{i+1}, z_{i+1};  \ldots; v_{n}, z_n) \rangle,  
\end{equation}
in $z$.
In particular, the power series (\ref{power-series}) in $z$ is absolutely convergent
to (\ref{expansion-fn}) in the disk $|z|<\min_{i\ne j}\{|z_{i}-z_{j}|\}$. 
\end{proposition}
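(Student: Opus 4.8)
The plan is to prove the two assertions separately, each time reducing to one of the two halves \eqref{lder1}--\eqref{lder2} of the $L(-1)$-derivative property together with the fact that every matrix element $\langle w',\Phi(v_1,z_1;\dots;v_n,z_n)\rangle$ is a rational function with poles only along the diagonals $z_a=z_b$; throughout, the $dz_i^{\wt(v_i)}$ tensor factors are carried along inertly and play no role, so it suffices to argue with $\overline W$-valued functions as in \cite{Huang}.

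For the first identity \eqref{ldirdir}, fix $v_1,\dots,v_n\in V$, $w'\in W'$ and $(z_1,\dots,z_n)\in F_n\C$, and view
\[ \widetilde F(z)=e^{zL_W(-1)}\Phi(v_1,z_1;\dots;v_n,z_n),\qquad \widetilde G(z)=\Phi(v_1,z_1+z;\dots;v_n,z_n+z) \]
as $\overline W$-valued functions of the single variable $z$: the diagonal shift never creates a coincidence $z_i+z=z_j$, so $\widetilde G$ is defined for all $z$, while $\widetilde F$ is defined because $L_W(-1)$ is a weight-one operator on $\overline W$. One has $\widetilde F(0)=\widetilde G(0)$. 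Differentiating, $\widetilde F'(z)=L_W(-1)\widetilde F(z)$, while by the chain rule $\widetilde G'(z)=\bigl(\sum_{i=1}^n\partial_{z_i}\Phi\bigr)$ evaluated at the shifted point, which by \eqref{lder2} equals $L_W(-1)\widetilde G(z)$. Thus both functions solve the linear system $X'(z)=L_W(-1)X(z)$ with the same value at $z=0$. Since $L_W(-1)$ raises weight by one, this system is triangular with respect to the decomposition $\overline W=\prod_\alpha W_{(\alpha)}$ and can be solved inductively, weight by weight, starting from the lowest weight present, so the solution with prescribed initial value is unique; hence $\widetilde F=\widetilde G$, and pairing with $w'$ gives \eqref{ldirdir}.

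For the second assertion I would compare Taylor coefficients at $z=0$. Because $\langle w',\Phi(v_1,z_1;\dots;v_n,z_n)\rangle$ is rational with poles only along $z_a=z_b$, the function \eqref{expansion-fn}, regarded as depending on $z$ with the other variables fixed, is holomorphic near $0$, and its only singularities are the shifts $z=z_j-z_i$ with $j\ne i$. Applying $\partial_z^k$ and using \eqref{lder1} $k$ times shows that its $k$-th Taylor coefficient at $0$ equals $\frac{1}{k!}\langle w',\Phi(v_1,z_1;\dots;L_V(-1)^k v_i,z_i;\dots;v_n,z_n)\rangle$; summing over $k$, using linearity of $\Phi$ in the $i$-th slot and the grading-restriction of $V$ (which makes $e^{zL_V(-1)}v_i$, hence \eqref{power-series}, a well-defined power series in $z$), this series is exactly \eqref{power-series}. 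The radius of convergence of the Taylor series is therefore $\min_{j\ne i}|z_i-z_j|$, which dominates $\min_{a\ne b}|z_a-z_b|$, giving the stated absolute convergence on the disk $|z|<\min_{a\ne b}|z_a-z_b|$.

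I expect the only real obstacle to be making the ODE/uniqueness step of the first part rigorous for $\overline W$-valued, rather than scalar, functions: one has to verify that after decomposing $\overline W=\prod_\alpha W_{(\alpha)}$ the operator $e^{zL_W(-1)}$ and the series occurring in $\widetilde F$ and $\widetilde G$ contribute only finitely many terms in each fixed weight, so that term-by-term differentiation and the inductive solution of the triangular system are legitimate. Once this is checked the remaining manipulations are routine; in fact the entire statement, in the $\overline W$-setting, is already established in \cite{Huang}, and the present formulation follows from it by the inert transport of the differential factors.
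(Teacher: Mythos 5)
The paper does not actually prove this proposition; it is imported verbatim from \cite{Huang} with only a citation, so there is no in-paper argument to compare against. Your proof is correct and reconstructs the standard argument from \cite{Huang}: the first identity via uniqueness for the weight-triangular linear system $X'=L_{W}(-1)X$ (legitimate here because pairing with a fixed $w'\in W'$ involves only finitely many weights, as you note), and the second by matching Taylor coefficients using the $L(-1)$-derivative property together with rationality of the matrix elements, which bounds the radius of convergence from below by the distance $\min_{j\ne i}|z_{j}-z_{i}|$ to the nearest possible pole and hence by $\min_{a\ne b}|z_{a}-z_{b}|$.
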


Finally, we have 
\begin{definition}
A linear map 
\[
\Phi: V^{\otimes n} \to \W_{z_{1}, \dots, z_{n}}
\]
 has the  $L(0)$-conjugation property if for $v_{1}, \dots, v_{n}\in V$,
$w'\in W'$, $(z_{1}, \dots, z_{n})\in F_{n}\C$ and $z\in \C^{\times}$ so that 
$(zz_{1}, \dots, zz_{n})\in F_{n}\C$,
\begin{eqnarray}
\label{loconj}
\langle w', z^{L_{W}(0)}   
\Phi \left(v_{1}, z_1; \cdots; v_{n}, z_{n} \right) \rangle 
=\langle w', 
 \Phi(z^{L(0)} v_{1}, zz_{1};  \cdots ;  z^{L(0)} v_{n},  zz_{n})\rangle.
\end{eqnarray} 
\end{definition}
\subsection{$E$-elements}
 For $w\in W$, the $\overline{W}$-valued function 
is given by 
$$
E^{(n)}_{W}(v_{1}, z_1; \cdots; v_{n}, z_n; w)
= E(\omega_{W}(v_{1}, z_{1}) \ldots \omega_{W}(v_{n}, z_{n})w),   
$$
where an element $E(.)\in \overline{W}$ is given by (see notations for $\omega_W$ in Section \ref{spaces}) 
\[
\langle w', E(.) \rangle =R(\langle w', . \rangle), 
\]
 and $R(.)$ denotes the following (cf. \cite{Huang}).   
Namely, 
if a meromorphic function $f(z_{1}, \dots, z_{n})$ on a region in $\C^{n}$
can be analytically extended to a rational function in $(z_{1}, \dots, z_{n})$, 
then the notation $R(f(z_{1}, \dots, z_{n}))$ is used to denote such rational function. 
One defines  
\[
E^{W; (n)}_{WV}(w; v_{1}, z_1 ; \ldots; v_{n}, z_n) 
=E^{(n)}_{W}(v_{1}, z_1; \ldots;  v_{n}, z_n; w),
\]
where 
$E^{W; (n)}_{WV}(w; v_{1}, z_1 ; \ldots; v_{n}, z_n)$ is  
an element of $\overline{W}_{z_{1}, \dots, z_{n}}$.
One defines
\[
 \Phi\circ \left(E^{(l_{1})}_{V;\;\one}\otimes \cdots \otimes E^{(l_{n})}_{V;\;\one}\right): 
V^{\otimes m+n}\to 
\overline{W}_{z_{1},  \dots, z_{m+n}},
\] 
by
\begin{eqnarray*}
\lefteqn{(\Phi\circ (E^{(l_{1})}_{V;\;\one}\otimes \cdots \otimes 
E^{(l_{n})}_{V;\;\one}))(v_{1}\otimes \cdots \otimes v_{m+n-1})}
\nn
&&=E(\Phi(E^{(l_{1})}_{V; \one}(v_{1}\otimes \cdots \otimes v_{l_{1}})\otimes \cdots
\nn 
&&\quad\quad\quad\quad\quad \otimes 
E^{(l_{n})}_{V; \one}
(v_{l_{1}+\cdots +l_{n-1}+1}\otimes \cdots 
\otimes v_{l_{1}+\cdots +l_{n-1}+l_{n}}))),  
\end{eqnarray*}
and 
\[
E^{(m)}_{W} \circ_{0 
} \Phi: V^{\otimes m+n}\to 
\overline{W}_{z_{1}, \dots, 
z_{m+n-1}},
\]
 is given by 
\begin{eqnarray*}
\lefteqn{
(E^{(m)}_{W}\circ_{0 
}\Phi)(v_{1}\otimes \cdots \otimes v_{m+n})
}\nn
&&
=E(E^{(m)}_{W}(v_{1}\otimes \cdots\otimes v_{m};
\Phi(v_{m+1}\otimes \cdots\otimes v_{m+n}))). 
\end{eqnarray*}
Finally,  
\[
E^{W; (m)}_{WV}\circ_{m+1 
}\Phi: V^{\otimes m+n}\to 
\overline{W}_{z_{1}, \dots, 
z_{m+n-1}},
\]
 is defined by 
\begin{eqnarray*}
(E^{W; (m)}_{WV}\circ_{m+1 
}\Phi)(v_{1}\otimes \cdots \otimes v_{m+n})
 =E(E^{W; (m)}_{WV}(\Phi(v_{1}\otimes \cdots\otimes v_{n})
; v_{n+1}\otimes \cdots\otimes v_{n+m})). 
\end{eqnarray*}
In the case that $l_{1}=\cdots=l_{i-1}=l_{i+1}=1$ and $l_{i}=m-n-1$, for some $1 \le i \le n$,
we will use $\Phi\circ_{i} E^{(l_{i})}_{V;\;\one}$ to 
denote $\Phi\circ (E^{(l_{1})}_{V;\;\one}\otimes \cdots 
\otimes E^{(l_{n})}_{V;\;\one})$.
Note that our notations differ with that of \cite{Huang}. 
\section{Appendix: Maps composable with vertex operators}
\label{composable}
%

In the construction of double complexes in Section \ref{coboundary}
 we would like to use linear maps from tensor powers of $V$ to the space 
$\W_{z_{1}, \dots, z_{n}}$ 
to define cochains in vertex algebra cohomology theory.
 For that purpose, in particular, to define the coboundary operator, we have to compose cochains 
with vertex operators. However, as mentioned in \cite{Huang},
 the images of vertex operator maps in general do not belong to 
algebras or thier modules.
 They belong to corresponding algebraic completions which constitute 
  one of the most subtle features of the theory of vertex algebras.
 Because of this, we might not be able to compose vertex operators directly. 
In order to overcome this problem \cite{H2}, we first write a series by projecting
an element of the algebraic completion of an algebra or a module 
to its homogeneous components. 
Then we compose these homogeneous components with vertex operators,  
and take formal sums.
 If such formal sums are absolutely convergent, then these operators 
can be composed and can be used in constructions.

Another question that appears is the question of associativity. 
Compositions of maps are usually associative.
 But for compositions of maps defined by sums of 
absolutely convergent series the existence of does not provide associativity in general.  
 Nevertheless, the requirement of analyticity provides the associativity \cite{Huang}.  
\begin{definition}
\label{composabilitydef}
For a $V$-module 
\[
W=\coprod_{n\in \C} W_{(n)}, 
\]
 and $m\in \C$, 
let
\[
P_{m}: \overline{W}\to W_{(m)}, 
\]
 be the projection from 
$\overline{W}$ to $W_{(m)}$. 
Let 
\[
\Phi: V^{\otimes n} \to \W_{z_{1}, \dots, z_{n}}, 
\]
 be a linear map. For $m\in \N$, 
$\Phi$ is called \cite{Huang, FQ} to be composable with $m$ vertex operators if  
the following conditions are satisfied:

\medskip 
1) Let $l_{1}, \dots, l_{n}\in \Z_+$ such that $l_{1}+\cdots +l_{n}=m+n$,
$v_{1}, \dots, v_{m+n}\in V$ and $w'\in W'$. Set 
 \begin{eqnarray}
\label{psii}
\Psi_{i}
&
=
&
E^{(l_{i})}_{V}(v_{k_1}, z_{k_1}- \zeta_{i};  
 \ldots;  
v_{k_i}, z_{k_i}- \zeta_{i} 
 ; \one_{V}),    
\end{eqnarray}
where
\begin{eqnarray}
\label{ki}
 {k_1}={l_{1}+\cdots +l_{i-1}+1}, \quad  \ldots, \quad  {k_i}={l_{1}+\cdots +l_{i-1}+l_{i}}, 
\end{eqnarray} 
for $i=1, \dots, n$. Then there exist positive integers $N^n_m(v_{i}, v_{j})$
depending only on $v_{i}$ and $v_{j}$ for $i, j=1, \dots, k$, $i\ne j$ such that the series 
\begin{eqnarray}
\label{Inm}
\mathcal I^n_m(\Phi)=
\sum_{r_{1}, \dots, r_{n}\in \Z}\langle w', 
\Phi(P_{r_{1}}\Psi_{1}; \zeta_1; 
 \ldots; 
P_{r_n} \Psi_{n}, \zeta_{n}) 
\rangle,
\end{eqnarray} 
is absolutely convergent  when 
\begin{eqnarray}
\label{granizy1}
|z_{l_{1}+\cdots +l_{i-1}+p}-\zeta_{i}| 
+ |z_{l_{1}+\cdots +l_{j-1}+q}-\zeta_{i}|< |\zeta_{i}
-\zeta_{j}|,
\end{eqnarray} 
for $i,j=1, \dots, k$, $i\ne j$ and for $p=1, 
\dots,  l_i$ and $q=1, \dots, l_j$. 
The sum must be analytically extended to a
rational function
in $(z_{1}, \dots, z_{m+n})$,
 independent of $(\zeta_{1}, \dots, \zeta_{n})$, 
with the only possible poles at 
$z_{i}=z_{j}$, of order less than or equal to 
$N^n_m(v_{i}, v_{j})$, for $i,j=1, \dots, k$,  $i\ne j$.

\medskip 
2) 
 For $v_{1}, \dots, v_{m+n}\in V$, there exist 
positive integers $N^n_m(v_{i}, v_{j})$, depending only on $v_{i}$ and 
$v_{j}$, for $i, j=1, \dots, k$, $i\ne j$, such that for $w'\in W'$, and 
\[
{\bf v}_{n,m}=(v_{1+m}\otimes \cdots\otimes v_{n+m}),
\]
\[
  {\bf z}_{n,m}=(z_{1+m}, \dots, z_{n+m}),
\]
 such that 
\begin{eqnarray}
\label{Jnm}
\mathcal J^n_m(\Phi)=  
\sum_{q\in \C}\langle w', E^{(m)}_{W} \Big(v_{1}\otimes \cdots\otimes v_{m}; 
P_{q}( \Phi({\bf v}_{n,m} ) ({\bf z}_{n,m})\Big)\rangle, 
\end{eqnarray}
is absolutely convergent when 
\begin{eqnarray}
\label{granizy2}
z_{i}\ne z_{j}, \quad i\ne j, \quad 
\nn
|z_{i}|>|z_{k}|>0, 
\end{eqnarray}
 for $i=1, \dots, m$, and $k=m+1, \dots, m+n$, and the sum can be analytically extended to a
rational function 
in $(z_{1}, \dots, z_{m+n})$ with the only possible poles at 
$z_{i}=z_{j}$, of orders less than or equal to 
$N^n_m(v_{i}, v_{j})$, for $i, j=1, \dots, k$, $i\ne j$,. 
\end{definition}
In \cite{Huang}, we the following useful proposition is proven: 
\begin{proposition}\label{comp-assoc}
Let $\Phi: V^{\otimes n}\to 
\overline{W}_{z_{1}, \dots, z_{n}}$
be composable with $m$ vertex operators. Then we have:
\begin{enumerate}
\item For $p\le m$, $\Phi$ is 
composable with $p$ vertex operators and for 
$p, q\in \Z_{+}$ such that $p+q\le m$ and 
$l_{1}, \dots, l_{n} \in \Z_+$ such that $l_{1}+\cdots +l_{n}=p+n$,
$\Phi\circ (E^{(l_{1})}_{V;\;\one}\otimes 
\cdots \otimes E^{(l_{n})}_{V;\;\one})$ and $E^{(p)}_{W}\circ_{p+1}\Phi$
are
composable with $q$ vertex operators. 

\item For $p, q\in \Z_{+}$ such that $p+q\le m$,
$l_{1}, \dots, l_{n} \in \Z_+$ such that $l_{1}+\cdots +l_{n}=p+n$ and
$k_{1}, \dots, k_{p+n} \in \Z_+$ such that $k_{1}+\cdots +k_{p+n}=q+p+n$,
we have
\begin{eqnarray*}
&(\Phi\circ (E^{(l_{1})}_{V;\;\one}\otimes 
\cdots \otimes E^{(l_{n})}_{V;\;\one}))\circ 
(E^{(k_{1})}_{V;\;\one}\otimes 
\cdots \otimes E^{(k_{p+n})}_{V;\;\one})&\nn
&=\Phi\circ (E^{(k_{1}+\cdots +k_{l_{1}})}_{V;\;\one}\otimes 
\cdots \otimes E^{(k_{l_{1}+\cdots +l_{n-1}+1}+\cdots +k_{p+n})}_{V;\;\one}).&
\end{eqnarray*}

\item For $p, q\in \Z_{+}$ such that $p+q\le m$ and
$l_{1}, \dots, l_{n} \in \Z_+$ such that $l_{1}+\cdots +l_{n}=p+n$,
we have
$$E^{(q)}_{W}\circ_{q+1} (\Phi\circ (E^{(l_{1})}_{V;\;\one}\otimes 
\cdots \otimes E^{(l_{n})}_{V;\;\one}))
=(E^{(q)}_{W}\circ_{q+1} \Phi)\circ (E^{(l_{1})}_{V;\;\one}\otimes 
\cdots \otimes E^{(l_{n})}_{V;\;\one}).$$

\item For $p, q\in \Z_{+}$ such that $p+q\le m$, we have
$$E^{(p)}_{W}\circ_{p+1} (E^{(q)}_{W}\circ_{q+1}\Phi)
=E^{(p+q)}_{W}\circ_{p+q+1}\Phi.$$
\end{enumerate}
\end{proposition}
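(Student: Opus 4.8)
The plan is to follow the proof of the analogous statement for maps composable with vertex operators for a grading-restricted vertex algebra given in \cite{Huang}, and to observe that nothing in that argument is affected by passing from $\overline{W}_{z_1,\dots,z_n}$ to the space $\W_{z_1,\dots,z_n}$ of Definition \ref{wspace}: the $\wt$-differential tensor factors attached to each entry are untouched by every map and every vertex operator occurring in the statement, so each of (1)--(4) reduces to the corresponding assertion for the underlying $\overline{W}$-valued maps. Thus I would first strip off the differentials and work throughout with $\overline{W}_{z_1,\dots,z_n}$-valued linear maps.

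For (1), the reduction from $m$ to $p\le m$ vertex operators I would obtain by restricting the partitions $l_1+\cdots+l_n=m+n$ in \eqref{psii} to those with $l_i=1$ for all but $p$ of the indices; the resulting series \eqref{Inm} and \eqref{Jnm} are then sub-families of those already known to converge for $\Phi$, so both the absolute convergence on \eqref{granizy1}, \eqref{granizy2} and the pole bounds $N^n_m(v_i,v_j)$ are inherited. To show that $\Phi\circ(E^{(l_1)}_{V;\one}\otimes\cdots\otimes E^{(l_n)}_{V;\one})$ and $E^{(p)}_{W}\circ_{p+1}\Phi$ are composable with $q$ vertex operators when $p+q\le m$, I would write out their defining sums of $P_r$-projections and, after regrouping those (absolutely convergent) sums and using the duality/associativity axiom for iterated vertex operators on $V$ and on $W$ (cf. \eqref{porosyata} and Definition \ref{grares}), identify them with the $\mathcal I^n_m$ or $\mathcal J^n_m$ series for $\Phi$ with $p+q$ vertex operators, restricted to a suitable open sub-region of the configuration space; absolute convergence there together with analytic continuation then yields the required rational function with poles only along $z_i=z_j$.

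For (2), (3) and (4) the mechanism is uniform: each side of the asserted identity, paired with an arbitrary $w'\in W'$, is an absolutely convergent series on some nonempty open subset of $F_{m+n}\C$ and analytically continues to a rational function with poles only on the diagonals. Using the associativity of $Y_V$ and $Y_W$ --- that the various nested and iterated products converge to one and the same rational function --- I would check that the two sides agree on the (nonempty) overlap of their domains of convergence; since a rational function is determined by its restriction to any nonempty open set, they then coincide as elements of $\overline{W}_{z_1,\dots,z_{m+n}}$, hence of $\W_{z_1,\dots,z_{m+n}}$. Concretely (2) is the merging of two levels of nested $Y_V$-products into one, (3) is commuting an outer $Y_W$-product past a nested $Y_V$-product, and (4) is composing two $Y_W$-products into a single one.

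I expect the main obstacle to be the careful bookkeeping of the domains of convergence and of the pole orders: one must verify that the regions attached to the two sides of each identity genuinely overlap, and that after regrouping the formal sums over the $P_r$ the rearranged series still converge absolutely --- absolute convergence being exactly what licenses the rearrangement and the subsequent analytic continuation. This is the subtlety flagged in the discussion preceding Definition \ref{composabilitydef}: compositions built from absolutely convergent operator series are not associative a priori, and it is only the rationality of the limits that forces the identities. Once these convergence and regrouping facts are established, each of (1)--(4) follows formally from the grading-restricted vertex algebra and module axioms.
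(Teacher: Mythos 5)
The paper does not actually prove Proposition \ref{comp-assoc}: it is imported verbatim from \cite{Huang} and used as a black box (the text introducing it reads ``In \cite{Huang} \dots the following useful proposition is proven''), so there is no in-paper argument to measure yours against. Your sketch is a faithful reconstruction of the strategy of Huang's proof: the $\wt$-differential tensor factors of Definition \ref{wspace} are indeed inert, so everything reduces to the $\overline{W}_{z_1,\dots,z_n}$-valued case, and the substance is the convergence bookkeeping --- absolute convergence licensing rearrangement of the $P_r$-sums, followed by the observation that a rational function with prescribed poles is determined by its restriction to any nonempty open subset of the configuration space. One detail in your treatment of (1) is imprecise: the series defining composability with $p$ vertex operators involves partitions $l_1+\cdots+l_n=p+n$, which are not literally a sub-family of the partitions $l_1+\cdots+l_n=m+n$ governing composability with $m$ (nor does ``$l_i=1$ for all but $p$ indices'' make the sums match). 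The standard reduction is instead to pad the list of vertex algebra elements with $m-p$ copies of $\one_V$ and use the identity and creation properties ($Y_V(\one_V,z)=\mathrm{Id}_V$ and $Y_V(u,z)\one_V\in V[[z]]$) to collapse $E^{(l)}_{V;\;\one}$ to $E^{(l-1)}_{V;\;\one}$, after which the $\mathcal I$- and $\mathcal J$-series for $p$ operators become specializations of those for $m$ and inherit convergence and the pole bounds. With that correction your outline matches the intended argument.
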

\section{Appendix: Proofs of Lemmas \ref{empty}, \ref{nezu}, \ref{subset} and Proposiiton \ref{nezc}}
\label{proof}
In this Appendix we provide proofs of Lemma \ref{nezu} and Proposiiton \ref{nezc}
We start with the proof of Lemma \ref{empty}. 
\begin{proof} 
From the construction 
 of spaces for double complex for a grading-restricted vertex algebra cohomology, 
it is clear that the spaces $C^n (V, \W, \U, \F)(U_j)$,  $1 \le s \le m$ in Definition \ref{initialspace} are non-zero.
On each transversal section $U_s$,  $1 \le s \le m$, $\Phi(v_1, c_j(p_1);  \ldots; v_n, c_j(p_n))$
 belongs to the space 
$\W_{c_j(p_1),  \ldots, c_j(p_n)}$, and satisfy the $L(-1)$-derivative \eqref{lder1} and $L(0)$-conjugation
 \eqref{loconj}
properties. 
 A map $\Phi(v_1, c_j(p_1)$;  $\ldots$; $v_n, c_j(p_n))$ 
is composable with $m$ vertex operators 
with formal parameters identified with local coordinates $c_j(p'_j)$, on each transversal section $U_j$.
Note that on each transversal section, $n$ and $m$ the spaces \eqref{ourbicomplex} remain the same.   
The only difference may be constituted by the composibility conditions \eqref{Inm} and \eqref{Jnm} for  $\Phi$.

In particular, 
for  $l_{1}, \dots, l_{n}\in \Z_+$ such that $l_{1}+\cdots +l_{n}=n+m$, 
$v_{1}, \dots, v_{m+n}\in V$ and $w'\in W'$, recall \eqref{psii} that 
 \begin{eqnarray}
\label{psiii}
\Psi_{i}
&
=
&
\omega_V(v_{k_1}, c_{k_1}(p_{k_1})-\zeta_i)  \ldots  \omega_V(v_{k_i}, c_{k_i}(p_{k_i})-\zeta_i) \;\one_{V},     
\end{eqnarray}
where $k_i$ is defined in \eqref{ki}, 
for $i=1, \dots, n$, 
 depend on coordinates of points on transversal sections. 
At the same time, in the first composibility condition 
 \eqref{Inm} depends on projections $P_r(\Psi_i)$, $r\in \C$, 
 of $\W_{c(p_1), \ldots, c(p_n)}$ to $W$, and 
on arbitrary variables $\zeta_i$, $1 \le i \le m$.  
On each transversal connection $U_s$, $1 \le s \le m$,  
the absolute convergency is assumed for the series  \eqref{Inm} (cf. Appendix \ref{composable}). 
Positive integers $N^n_m(v_{i}, v_{j})$,
(depending only on $v_{i}$ and $v_{j}$) as well as $\zeta_i$, 
 for $i$, $j=1, \dots, k$, $i\ne j$, 
may vary for transversal sections $U_s$.  
Nevertheless, the domains of convergency determined by the conditions \eqref{granizy1} which have the form 
\begin{equation}
\label{popas}
|c_{m_i}(p_{m_i})-\zeta_{i}| 
+ |c_{n_i}(p_{n_i})-\zeta_{i}|< |\zeta_{i}-\zeta_{j}|,
\end{equation} 
for $m_i= l_{1}+\cdots +l_{i-1}+p$, $n=l_{1}+\cdots +l_{j-1}+q$,   
 $i$, $j=1, \dots, k$, $i\ne j$ and for $p=1, 
\dots,  l_i$ and $q=1, \dots, l_j$, 
are limited by $|\zeta_{i}-\zeta_{j}|$ in \eqref{popas} from above. 
Thus, for the intersection variation of sets of homology embeddings in \eqref{ourbicomplex}, 
 the absolute convergency condition for \eqref{Inm} is still fulfilled. 
Under intersection in \eqref{ourbicomplex}
by choosing appropriate $N^n_m(v_{i}, v_{j})$, 
one can analytically extend \eqref{Inm}  
to a rational function in $(c_1(p_{1}), \dots, c_{n+m}(p_{n+m}))$,
 independent of $(\zeta_{1}, \dots, \zeta_{n})$, 
with the only possible poles at 
$c_{i}(p_i)=c_{j}(p_j)$, of order less than or equal to 
$N^n_m(v_{i}, v_{j})$, for $i,j=1, \dots, k$,  $i\ne j$. 

As for the second condition in Definition of composibility, we note that, on each transversal section, 
the domains of absolute convergensy 
$c_{i}(p_i)\ne c_{j}(p_j)$, $i\ne j$
$|c_{i}(p_i)| > |c_{k}(p_j)|>0$, for 
 $i=1, \dots, m$, 
and 
$k=1+m, \dots, n+m$, 
for 
\begin{eqnarray}
\mathcal J^n_m(\Phi) &=&  
\sum_{q\in \C}\langle w',
\omega_W(v_{1}, c_1(p_1)) \ldots  \omega_W(v_{m}, c_m(p_m)) \;    
\nn
&& \qquad 
P_{q}(\Phi( v_{1+m}, c_{1+m}(p_{1+m}); \ldots; v_{n+m}, c_{n+m}(p_{n+m})) \rangle, 
\end{eqnarray}
are limited from below by the same set ot absolute values of local coordinates on 
transversal section. 
Thus, under intersection in \eqref{ourbicomplex} this condition is preserved, and 
  the sum \eqref{Jnm} can be analytically extended to a
rational function 
in $(c_{1}(p_1)$, $\dots$, $c_{m+n}(p_{m+n}))$ with the only possible poles at
$c_{i}(p_i)=c_{j}(p_j)$, 
of orders less than or equal to 
$N^n_m(v_{i}, v_{j})$, for $i, j=1, \dots, k$, $i\ne j$. 
Thus, we proved the lemma.   
\end{proof}
Next we give proof of Lemma \ref{nezu}. 
\begin{proof}
Suppose we consider another transversal basis $\U'$ for $\F$.  
According to the definition,   
for each transversal section $U_i$ which belong to the original basis $\U$ in \eqref{ourbicomplex}   
 there exists a holonomy embedding 
\[
{h}'_i: U_i \hookrightarrow U_j', 
\]
i.e., it embeds $U_i$ into a section $U_j'$ of our new transversal basis $\U'$.  
Then consider the sequnce of holonomy embeddings $\left\{ h'_k \right\}$ such that 
\[
U'_{0} \stackrel{h'_1}{\hookrightarrow}   \ldots \stackrel{h'_k}{\hookrightarrow} U'_k.   
\]
For the combination of embeddings $\left\{ 
{h}'_i, i \ge 0 \right\}$ and  
\[
U_{0}\stackrel{h_1}{\hookrightarrow}  \ldots \stackrel{h_k}{\hookrightarrow} U_k,    
\]
we obtain commutative diagrams. 
Since the intersection in \eqref{ourbicomplex} is performed over all sets of homology mappings, 
then it is independent on the choice of a  transversal basis.
\end{proof}
Next, we prove Proposition \ref{nezc}. 
\begin{proof}
Here we prove that for generic elements of a quasi-conformal grading-restricted vertex algebra 
 $\Phi$ and $\omega_W$ $\in$ $\W_{z_1, \ldots, z_n}$ and  
   are canonical, i.e., independent on 
 changes 
\begin{eqnarray}
\label{zwrho}
z_i \mapsto w_i=\rho(z_i), \quad 1 \le i \le n, 
\end{eqnarray}
 of local coordinates of  $c_i(p_i)$ and $c_j(p'_j)$ at points $p_i$ and $p'_j$, $1 \le i \le n$, 
 $1 \le j \le k$. 
Thus the construction of the double complex spaces \eqref{ourbicomplex} is proved to be canonical too.
Let us denote by 
\[
\xi_i = \left( \beta_0^{-1} \; dw_i \right)^{\wt(v_i)}. 
\]
Recall the linear operator \eqref{poper} (cf. Appendix \ref{grading}).   
Define introduce the action of the transformations \eqref{zwrho} as 
\begin{eqnarray}
\label{deiv}
&& \Phi \left(dw_1^{\wt(v_1)} \otimes  v_1, w_1;
 \ldots ; dw_n^{\wt(v_n)} \otimes v_n, w_n \right)  
\nn
&& \qquad =\left( \frac{d f(\zeta)}{d\zeta} \right)^{-L_W(0)}\; P(f(\zeta))  
  \; \Phi \left( \xi_1 
\otimes  v_1, z_1;  
\ldots ; \xi_n 
\otimes v_n, z_n \right). 
\end{eqnarray}
We then obtain 
\begin{lemma}
An element \eqref{bomba} 
\[
\Phi \left(dz_1^{\wt(v_1)} \otimes  v_1, z_1;
 \ldots ; dz_n^{\wt(v_n)} \otimes v_n, z_n \right), 
\]
of $\W_{z_1, \ldots, z_n}$ is canonical 
is invariant under transformations \eqref{zwrho} of $\left({\rm Aut}\; \Oo^{(1)}\right)^{\times n}$. 
\end{lemma}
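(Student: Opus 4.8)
The plan is to verify that the right-hand side of \eqref{deiv} is well-defined and to show that the transformation rule it encodes is precisely the tautological one for sections of the $\left({\rm Aut}\;\Oo^{(1)}\right)^{\times n}$-torsor, so that the underlying object $\Phi$ is unchanged. First I would unpack the ingredients: by \eqref{rog} the coordinate change $\rho$ is written in exponential form $f(\zeta)=\exp\!\left(\sum_{k>-1}\beta_k\zeta^{k+1}\partial_\zeta\right)(\beta_0)^{\zeta\partial_\zeta}.\zeta$ with the $\beta_k$ determined recursively from the Taylor coefficients $a_k$ of $\rho$ as explained after \eqref{rog}, and $P(f(\zeta))$ is the operator \eqref{poperator} representing $f$ on $\W_{z_1,\ldots,z_n}$. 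The first point to establish is that $P(f(\zeta))$ acting on $\Phi(\xi_1\otimes v_1,z_1;\ldots;\xi_n\otimes v_n,z_n)$ produces only finitely many terms on each homogeneous component: this is exactly the statement recalled from \cite{BZF} (Chapter 6) that the subspaces $V_{\le m}$ are stable under all $P(f)$, together with the grading-restriction condition on $\W_{z_1,\ldots,z_n}$ which comes from quasi-conformality. Hence \eqref{deiv} makes sense.

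Next I would carry out the comparison term by term. Using the $L_W(0)$-conjugation property \eqref{loconj} for the $\beta_0^{L_W(0)}$ factor hidden in $P(f(\zeta))$, and the commutation formula \eqref{infaction000} (valid for quasi-conformal $V$) to move each $L_V(m)$, $m\ge 0$, appearing in \eqref{poperator} past the vertex-operator insertions inside $\Phi$, one rewrites $P(f(\zeta))\,\Phi(\xi_1\otimes v_1,z_1;\ldots)$ as $\Phi$ evaluated at the transformed arguments. The differential factors $\xi_i=\left(\beta_0^{-1}dw_i\right)^{\wt(v_i)}$ are designed so that the $\wt(v_i)$-power of $d\zeta$ transforms by the Jacobian $\left(df(\zeta)/d\zeta\right)^{\wt(v_i)}$, and the prefactor $\left(df(\zeta)/d\zeta\right)^{-L_W(0)}$ in \eqref{deiv} is precisely what cancels the accumulated Jacobian acting on the $W$-valued output (here one uses that $L_W(0)$ measures the total weight, which is the sum $\sum_i\wt(v_i)$ on the relevant component because $\Phi$ has the $L(0)$-conjugation property). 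This is the same cancellation that \cite{BZF} uses to prove invariance of vertex operators dressed with conformal-weight differentials, both for primary states via Definition \ref{primary} and in the generic case via \eqref{infaction000}; I would invoke that computation rather than redo it, noting that the only new feature is the multi-point dependence, which factorizes because the $n$ insertions sit at distinct points $z_i$ and the operators $L_V(m)$ act on one slot at a time.

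The main obstacle I expect is bookkeeping the non-primary (generic) case: when $v_i$ is not primary, \eqref{infaction000} produces a sum over $m\ge 0$ of lower-weight descendants $L_V(m)v_i$, and one must check that the full operator $P(f(\zeta))$ reorganizes these into $\Phi$ evaluated with each $v_i$ replaced by its image under the $\Aut\Oo^{(1)}$-action on $V$ (the representation $f\mapsto P(f)$ of the Lemma recalled in Appendix \ref{grading}), while simultaneously the arguments $z_i$ get replaced by $w_i=\rho(z_i)$ and the differentials by $dw_i^{\wt(v_i)}$. The coherence of this — i.e. that the torsor action on formal coordinates, the representation on $V$, and the Jacobian twist on $\W$ all fit together into a single consistent transformation law — is exactly the content of Chapter 6 of \cite{BZF} transported from a single vertex operator to the map $\Phi$. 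Once the Lemma is in place, Proposition \ref{nezc} follows immediately: since each element \eqref{bomba} of $\W_{c_1(p_1),\ldots,c_n(p_n)}$ and each vertex operator \eqref{poper} is invariant under \eqref{zwrho}, the defining conditions (composability \eqref{Inm}, \eqref{Jnm}, the $L(-1)$-derivative and $L(0)$-conjugation properties, the shuffle symmetry \eqref{shushu}) are coordinate-independent, and therefore so is the space \eqref{ourbicomplex}; combined with Lemma \ref{nezu} this gives canonicity with respect to foliation-preserving choices of local coordinates on $\mathcal M/\F$.
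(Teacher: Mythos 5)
Your proposal is correct in substance and follows essentially the same strategy as the paper: both arguments hinge on the transformation rule \eqref{deiv}, the operator $P(f(\zeta))$ of \eqref{poperator}, and the cancellation of the Jacobian factors carried by the differentials $dw_i^{\wt(v_i)}$ against the prefactor $\left(df(\zeta)/d\zeta\right)^{-L_W(0)}$. The one place where your route diverges is the mechanism for pushing $P(f(\zeta))$ through $\Phi$: you invoke the commutation formula \eqref{infaction000} to move each $L_V(m)$ past ``vertex-operator insertions inside $\Phi$'' and then defer to Chapter 6 of \cite{BZF}. That formula is a statement about vertex operators, so this step tacitly assumes $\Phi$ is a composite of such operators, whereas the lemma concerns a general element of $\W_{z_1,\ldots,z_n}$, which by Definition \ref{wspace} is only guaranteed to satisfy the $L(-1)$-derivative property \eqref{lder1} and the $L(0)$-conjugation property \eqref{loconj}. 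The paper's proof avoids this by working directly with those two assumed properties together with the differential-operator realization \eqref{repro}: applying $P(f(\zeta))$ translates each argument $z_i$ to $\sum_{m\ge 0}(m+1)\beta_m z_i^{m+1}=\left(df(z_i)/dz_i\right)z_i$ via \eqref{lder1}, and the $L_W(0)$-conjugation \eqref{loconj} then absorbs the prefactor into the rescaling $\left(df(z_i)/dz_i\,dw_i\right)^{-\wt(v_i)}$, returning the original expression in the $z_i$. Your version buys a cleaner conceptual link to the torsor picture of \cite{BZF} and adds a useful well-definedness check (finiteness of $P(f)$ on each graded piece) that the paper leaves implicit; to close the small gap you should replace the appeal to \eqref{infaction000} by the $L(-1)$- and $L(0)$-properties that $\Phi$ carries by hypothesis, which is exactly what the paper's explicit matrix-element computation does. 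Your concluding derivation of Proposition \ref{nezc} from the lemma matches the paper's.
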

\begin{proof}
Consider \eqref{deiv}. 
First, note that 
\[
 f'(\zeta) =\frac{df(\zeta)}{d\zeta}= \sum\limits_{m \ge 0} (m+1) \; \beta_m \zeta^m. 
\]
By using the identification \eqref{repro} and 
 and the $L_W(-1)$-properties \eqref{lder1} and \eqref{loconj} we  
obtain 
\begin{eqnarray*}
&& 
 \langle w',
 \Phi \left( dw_1^{\wt(v_1)} \otimes  v_1, w_1; 
 \ldots; dw_n^{\wt(v_n)} \otimes v_n, w_n \right) 
\rangle 
\end{eqnarray*}
\begin{eqnarray*}
&& = 
  \langle w', f'(\zeta)
^{-L_W(0)}\;  
P(f(\zeta)) 
  \; \Phi \left( \xi_1
\otimes  v_1, z_1;
 \ldots ; \xi_n 
 \otimes v_n, z_n \right)  
\rangle  
\end{eqnarray*}
\begin{eqnarray*}
&&  \qquad = 
 \langle w', 
\left(
\frac{d f(\zeta)} {d\zeta} \right)^{-L_W(0)} \; 
\Phi \left(dw_1^{\wt(v_1)} \otimes  v_1, 
 \sum\limits_{m \ge 0} (m+1)\; \beta_m z_1^{m+1}; 
 \ldots; \right. 
\nn
&& 
\left.  \left. 
\qquad \qquad \qquad \qquad \qquad \qquad \qquad \qquad  dw_n^{\wt(v_n)} \otimes v_n, 
 \sum\limits_{m \ge 0} (m+1)\; \beta_m z_n^{m+1} \right)  \rangle \right) 
\end{eqnarray*}
\begin{eqnarray*}
&& 
= 
 \langle w',  
\left( \frac{d f(\zeta)}{d \zeta} \right)^{-L_W(0)} \; 
\Phi \left(dw_1^{\wt(v_1)} \otimes  v_1, 
\left( \frac{d f(z_1)}{dz_1} \right) z_1;      \right. 
\nn
&& 
\left.  
\qquad \qquad \qquad \qquad \qquad \qquad \qquad \qquad  
 \ldots;  dw_n^{\wt(v_n)} \otimes v_n, 
 \left( \frac{d f(z_n)}{dz_n} \right) z_n\right)  \rangle 
\end{eqnarray*}
\begin{eqnarray*}
&&
\qquad =  
 \langle w',  
\Phi \left( \left( \frac{d f(z_1)}{dz_i}  \; dw_1 \right)^{-\wt(v_1)} \otimes  v_1, z_1;     
\right. 
\nn
&&
\left.  
\qquad \qquad \qquad \qquad \qquad \qquad \qquad \qquad \qquad
 \ldots; \left( \frac{d f(z_n)}{dz_n} \; dw_n \right)^{-\wt(v_n)}   \otimes v_n, z_n 
 \rangle \right) 
\nn
&& 
\qquad = 
  \langle w',  \Phi 
\left(dz_1^{\wt(v_1)} \otimes  v_1, z_1;
 \ldots; dz_n^{\wt(v_n)} \otimes v_n, z_n \right) \rangle. 
\end{eqnarray*}
Thus we proved the Lemma.  
\end{proof}
The elements $\Phi(v_1, z_1; \ldots; v_n, z_n)$ of $C^n_k(V, \W, \F)$ belong to the space $\W_{z_1, \ldots, z_n}$ 
and assumed to be composable with a set of vertex operators $\omega_W(v_j', c_j(p_j'))$, $1 \le j \le k$. 
Vertex operators $\omega_W(dc(p)^{\wt(v')} \otimes v_j', c_j(p_j'))$ constitute particular examples \
of mapping of $C^1_\infty(V, \W, \F)$ and, therefore, are invariant with respect to \eqref{zwrho}.  
Thus, the construction of spaces \eqref{ourbicomplex} is invariant under the action of the group 
\end{proof}
Finally, we give a proof of Lemma \ref{subset}. 
\begin{proof}
Since $n$ is the same for both spaces in \eqref{susus}, it only remains 
to check that the conditions for \eqref{Inm} and \eqref{Jnm} for $\Phi(v_1, c_j(p_1);$  $\ldots;$ $v_n, c_j(p_n))$
of  composibility Definition \ref{composable} 
 with vertex operators are stronger for ${C}_{m}^{n}(V, \W, \U, \F)$ 
then for ${C}_{m-1}^{n}(V, \W, \U, \F)$.  
In particular, 
in the first condition for \eqref{Inm} in definition of composability \ref{composabilitydef}
the difference between the spaces in \eqref{susus} is in indeces.  
Consider 
\eqref{psiii}. 
For ${C}_{m-1}^{n}(V, \W, \U, \F)$, the summations in idexes  
${k_1}={l_{1}+\cdots +l_{i-1}+1}$, ..., ${k_i}={l_{1}+\cdots +l_{i-1}+l_{i}}$, 
for the coordinates $c_j(p_1)$, ..., $c_j(p_n)$ 
with  $l_{1}, \dots, l_{n}\in \Z_+$, such that $l_{1}+\cdots +l_{n}=n+(m-1)$, 
and vertex algebra elements 
$v_{1}, \dots, v_{n+(m-1)}$ 
are included in summation for indexes for ${C}_{m}^{n}(V, \W, \U, \F)$.  
The conditions for the domains of absolute convergency for $\mathcal M$, i.e.,  
$|c_{l_{1}+\cdots +l_{i-1}+p}-\zeta_{i}| 
+ |c_{l_{1}+\cdots +l_{j-1}+q}-\zeta_{i}|< |\zeta_{i}
-\zeta_{j}|$, 
for $i$, $j=1, \dots, k$, $i\ne j$, and for $p=1, \dots,  l_i$ and $q=1, \dots, l_j$, 
 for the series \eqref{Inm}  
are more restrictive then for $m-1$. 
The conditions for $\mathcal I^n_{m-1}(\Phi)$ to be extended analytically  
to a rational function in $(c_1(p_{1})$,
 $\dots$, $c_{n+(m-1)} (p_{n+(m-1)}))$,
 with positive integers $N^n_{m-1}(v_{i}, v_{j})$,
depending only on $v_{i}$ and $v_{j}$ for $i, j=1, \dots, k$, $i\ne j$, 
are included in the conditions for $\mathcal I^n_m(\Phi)$. 

Similarly, the second condition for \eqref{Jnm}, 
of is absolute convergency
and analytical extension to a
rational function 
in $(c_{1}(p_1), \dots, c_{m+n}(p_{m+n}))$, with the only possible poles at
$c_{i}(p_i)=c_{j}(p_j)$, 
of orders less than or equal to 
$N^n_m(v_{i}, v_{j})$, for $i, j=1, \dots, k$, $i\ne j$,  
for \eqref{Jnm}  
when 
$c_{i}(p_i)\ne c_{j}(p_j)$, $i\ne j$
$|c_{i}(p_i)| > |c_{k}(p_k)|>0$ for
 $i=1, \dots, m$, 
and 
$k=m+1, \dots, m+n$
includes the same condition for $\mathcal J^n_{m-1}(\Phi)$. 
Thus we obtain the conclusion of Lemma. 
\end{proof}


\begin{thebibliography}{99}


\bibitem{Knizhka}
 Di Francesco, P., Mathieu, P., S\'en\'echal, D.:
Conformal Field Theory. Graduate Texts in Contemporary Physics,
Springer 1996


\bibitem
{BG} Ya. V. Bazaikin, A. S. Galaev. Losik classes for codimension one foliations,  
Mathematics of Jussieu (2021) doi:10.1017/S1474748020000596. 

\bibitem
{BGG} Ya. V. Bazaikin, A. S. Galaev, and P. Gumenyuk. 
Non-diffeomorphic Reeb foliations and modified Godbillon-Vey class, arXiv:1912.01267.
Math. Z. (2021).  
https://doi.org/10.1007/s00209-021-02828-1 

\bibitem
{BZF} {D. Ben-Zvi and E. Frenkel} {\it Vertex algebras on algebraic curves}.
American Mathematical Society, 2 edition, 2004.    


\bibitem
{Bott} R. Bott, {\it Lectures on characteristic classes and foliations.}
Springer LNM 279 (1972), 1--94.

\bibitem
{BH} R. Bott and A. Haefliger, On characteristic classes of $\Gamma$-foliations,
Bull. Amer. Math. Soc. 78 (1972), 1039--1044.

\bibitem
{BS} {R. Bott, G. Segal} 
The cohomology of the vector fields on a manifold. Topology. 
V. 16, Issue 4, 1977, Pages 285--298. 

\bibitem
{CM} M. Crainic and I. Moerdijk, 
 ${\rm \check C}$ech-De Rham theory for leaf spaces of foliations. Math.
Ann. 328 (2004), no. 1--2, 59--85.


\bibitem
{DNF} 
 B. A. Dubrovin, A. T. Fomenko, and S. P. and Novikov.  {\it  Modern geometry: methods and applications.}  
 Graduate Texts in Mathematics, 93. Springer-Verlag, New York, 1992. 

\bibitem
{FHL} I. Frenkel, Y. Huang and J. Lepowsky, On axiomatic
approaches to vertex operator algebras and modules, {\it Memoirs
American Math. Soc.} {\bf 104}, 1993.

\bibitem
{FMS} Ph. Francesco, P. Mathieu, and D. Senechal. Conformal Field Theory. 
 Graduate Texts in Contemporary Physics. 1997. 

\bibitem
{Fuks1}
D. B. Fuks, {\it Cohomology of infinite-dimensional Lie algebras,}  
Contemporary Soviet Mathematics, Consultunt Bureau, New York, 1986.


\bibitem
{Fuks2} D. B. Fuchs, Characteristic classes of foliations. Russian
Math. Surveys, 28 (1973), no. 2, 1--16. 

\bibitem
{Galaev} A. S. Galaev Comparison of approaches to characteristic classes of foliations, 
arXiv:1709.05888. 

\bibitem{G} E. Ghys
L'invariant de Godbillon-Vey. Seminaire Bourbaki, 41--eme annee, n 706, S. M. F. Asterisque 177--178 (1989).  

\bibitem{GF1}{ I. M. Gelfand and D. B. Fuks}, Cohomologies of the Lie algebra of vector fields on  the circle,  Funktional. Anal, i Prilozen. 2 (1968), no. 3, 32-52; ibid. 4 (1970), 23-32.


\bibitem{GF2}{ I. M. Gelfand and D. B. Fuks}, Cohomologies of the Lie algebra of tangent vector fields of a smooth manifold. I, II, Funktional. Anal, i Prilozen. 3 (1969), no. 3, 32-52; ibid. 4 (1970), 23-32.

\bibitem{GF3}{I. M. Gel'fand and D. B. Fuks}, Cohomology of the Lie algebra of formal vector fields, Izv. Akad. Nauk SSSR Ser. Mat. 34 (1970), 322-337, Math. USSR-Izv. 4 (1970), 327-342.



\bibitem
{Gu1} R. C. Gunning. {\it Lectures on Riemann surfaces },
Princeton Univ. Press, (Princeton, 1966).

\bibitem
{Gu2} R. C. Gunning. {\it Lectures on Vector Bundles Over Riemann Surfaces.} 
 Princeton University Press, 1967


\bibitem
{H} Y.-Zh. Huang, A cohomology theory of grading-restricted vertex algebras,
Comm. Math. Phys. 327 (2014), no. 1, 279--307. 

\bibitem
{Huang0} { Y.-Zh. Huang } Differential equations and conformal field theories. 
Nonlinear evolution equations and dynamical systems, 61--71, World Sci. Publ., River Edge, NJ, 2003.



\bibitem
{Huang} {Y.-Zh.  Huang} 
A cohomology theory of grading-restricted vertex algebras. 
Comm. Math. Phys. 327 (2014), no. 1, 279--307. 


\bibitem
{Hu3} Y.-Zh. Huang, The first and second cohomologies 
of grading-restricted vertex algebras, 
 Communications in Mathematical Physics volume 327, 261--278 (2014)

\bibitem
{H2}
Y.-Zh. Huang, {\it Two-dimensional conformal geometry and vertex operator
algebras}, Progress in Mathematics, Vol. 148,
Birkh\"{a}user, Boston, 1997.

\bibitem
{IZ} P. Iglesias-Zemmour. {\it Diffeology}, Mathematical Surveys and Monographs
Volume: 185; 2013; 439 pp. 

\bibitem
{K} {V. Kac}:   
 {\it Vertex Operator Algebras for Beginners}, 
University Lecture Series \textbf{10}, AMS, Providence 1998.

\bibitem
{Ko} { D. Kotschick}:  
Godbillon-Vey invariants for families of foliations. 
Symplectic and contact topology: interactions and perspectives (Toronto, ON/Montreal, QC, 2001), 
131--144, Fields Inst. Commun., 35, Amer. Math. Soc., Providence, RI, 2003. 


\bibitem{KN1} 
Krichever I.M., S. P. Novikov, 
Algebras of Virasoro type, Riemann surfaces and the structures of soliton theory (Russian),
 Funktsional. Anal, i Prilozhen. 21 (1987), no. 2, 46-63. 
\bibitem{KN2} 
Krichever I.M., S. P. Novikov, 
 Algebras of Virasoro type, Riemann surfaces and strings in Minkowski space (Russian),
 Funktsional. Anal, i Prilozhen. 21 (1987), no. 4, 47-61, 96, 

\bibitem{KN3} 
Krichever I.M., S. P. Novikov, Algebras of Virasoro type, the energy-momentum tensor, and operator expansions 
on Riemann surfaces (Russian), Funktsional. Anal, i Prilozhen. 23 (1989), no. 1, 24-40, 
English transl., Funct. Anal. Appl. 23 (1989), no. 1, 19-33. 



\bibitem
{L} {S. Lang}.  {\it Elliptic functions}. With an appendix by J. Tate. Second edition. Graduate Texts in
Mathematics, 112. New York: Springer-Verlag, 1987



\bibitem
{LosikArxiv} {\sc  Losik, M. V.}: Orbit spaces and leaf spaces of foliations as generalized manifolds, 
  arXiv:1501.04993. 



\bibitem{Lawson} H. B. Lawson,  Foliations. Bull. Amer. Math. Soc. 80 (1974), no. 3, 369--418. 

\bibitem
{Li} { J. I. Liberati}: Cohomology of vertex algebras. J. Algebra 472 (2017), 259--272.


\bibitem
{Losik} {M. V. Losik}. Orbit spaces and leaf spaces of foliations as generalized manifolds, 
  arXiv:1501.04993. 


\bibitem
{MSV}
F. Malikov, V. Schechtman, A. Vaintrob, 
Chiral de Rham complex.
Comm. Math. Phys. 204 (1999), no. 2, 439--473.


\bibitem
{N0} S. P.  Novikov. The topology of foliations. (Russian) Trudy Moskov. Mat. Obsch. 14 1965 248--278.

\bibitem
{N1} S. P. Novikov. Topology of generic Hamiltonian foliations on Riemann surfaces. 
Mosc. Math. J. 5 (2005), no. 3, 633--667, 743.

\bibitem
{N2}  
S. P. Novikov. Topology of Foliations given by the real part of holomor-
phic one-form, preprint, 2005, arXive math.GT/0501338. 


\bibitem{vinogradov} {A. M. Vinogradov.}  
{\it Cohomological analysis of partial differential equations and secondary calculus.}  
 Translations of Mathematical Monographs, 204. 
American Mathematical Society, Providence, RI, 2001. xvi+247. 


\bibitem
 {FQ} { F. Qi},     
 Representation theory
and cohomology theory of meromorphic open string vertex algebras, Ph.D. dissertation, (2018).  

\bibitem
 {RS} 
Razumov, A. V.; Saveliev, M. V. Lie groups, differential geometry, and nonlinear integrable systems. 
Nonassociative algebra and 
its applications (Sao Paulo, 1998), 32--336, Lecture Notes in Pure and Appl. Math., 211, Dekker, New York, 2000.

\bibitem
{Schl0} Schlichenmaier, M. 
Krichever-Novikov type algebras. An introduction.
 Lie algebras, Lie superalgebras, vertex algebras and related topics, 181-220,
Proc. Sympos. Pure Math., 92, Amer. Math. Soc., Providence, RI, 2016.

\bibitem
{Schl1} 
 Schlichenmaier, M., Krichever-Novikov algebras for more than two points, Lett. Math. Phys.
19(1990), 15-165.

\bibitem
{Schl2} 
Krichever-Novikov algebras for more than two points: explicit generators, Lett. Math.
Phys. 19(1990), 327-336.


\bibitem
{S1} G. Segal, 
The definition of conformal field theory,
in: {\em Differential geometrical methods in theoretical physics 
(Como, 1987)}, 
NATO Adv. Sci. Inst. Ser. C Math. Phys. Sci., 250, 
Kluwer Acad. Publ., Dordrecht, 1988, 165--171. 


\bibitem
{S3}
G. B.~Segal, Two-dimensional conformal field theories and modular
functors, in: {\em Proceedings of the IXth International Congress on
Mathematical Physics, Swansea, 1988},
Hilger, Bristol, 1989, 22--37.



\bibitem{Thur} W. Thurston, Non-cobordant foliations on $S^3$. Bulletin Amer. Math. Soc. 78 (1972), 511--514.

\bibitem
{TUY} A. Tsuchiya, K. Ueno, and Y. Yamada, Y.: Conformal field
theory on universal family of stable curves with gauge symmetries,
Adv. Stud. Pure. Math. \textbf{19} (1989), 459--566.


\bibitem
{W} C. Weibel. {\it An introduction to homological algebras}, {\it Cambridge Studies in Adv. Math.},
Vol. 38, Cambridge University Press, Cambridge, 1994.


\bibitem
{Y} {A. Yamada, A.}. Precise variational formulas for abelian
differentials. Kodai Math.J. \textbf{3} (1980) 114--143.


\bibitem
{Zhu} Y. Zhu. Modular invariance of characters of vertex operator algebras. 
J. Amer. Math. Soc. 9(1), 237-302

\bibitem{vinogradov} {\sc Vinogradov, A. M.}  
Cohomological analysis of partial differential equations and secondary calculus. 
 Translations of Mathematical Monographs, 204. 
American Mathematical Society, Providence, RI, 2001. xvi+247. 

\end{thebibliography}
\end{document}